\documentclass{mbe}

\usepackage{epstopdf,subfigure}
\usepackage{color}
\usepackage{cite}

\usepackage{txfonts}

\numberwithin{equation}{section}

\newtheorem{theorem}{Theorem}[section]
\newtheorem{lemma}{Lemma}[section]
\newtheorem{remark}{Remark}[section]

\usepackage{graphicx}
\usepackage{epstopdf}
\usepackage{setspace}
\usepackage{mathrsfs}
\usepackage{booktabs}
\usepackage{multirow}
\usepackage{graphicx}
\usepackage{epstopdf}
\usepackage{array}
\usepackage{cite}
\usepackage{indentfirst}
\usepackage{booktabs}
\usepackage{makecell}
\usepackage{hyperref}
\usepackage{tabularx} 
\usepackage{ragged2e} 
\usepackage{booktabs} 
\usepackage{geometry}
\usepackage{float}
\usepackage{chngcntr}
\usepackage{color}

\counterwithout{figure}{section}
\begin{document}

\title{Stability and bifurcation analysis of a two-patch model with
	Allee effect and dispersal}

\author{%
  Yue Xia\affil{1},
  Lijuan Chen\affil{1,}\corrauth,
  Vaibhava Srivastava\affil{2}
  and
  Rana D. Parshad\affil{2,}\corrauth
}

\shortauthors{the Author(s)}

\address{%
  \addr{\affilnum{1}}{School of Mathematics and Statistics, Fuzhou University,
  	Fuzhou, Fujian 350108, China }
  \addr{\affilnum{2}}{Department of Mathematics, Iowa State University,
  	Ames, IA 50011, USA.}}

\corraddr{E-mail: chenlijuan@fzu.edu.cn (Lijuan Chen); rparshad@iastate.edu (Rana D. Parshad)
}

\begin{abstract}
In the current manuscript, a first two-patch model with Allee effect and nonlinear dispersal is presented.
We study both the ODE case and the PDE case here. In the ODE model, the stability of the equilibrium points and the existence of saddle-node bifurcation are discussed. The phase diagram and bifurcation curve of our model are also given by numerical simulation. Besides, the corresponding linear dispersal case is also presented. We show that when the Allee effect is large, high intensity of linear dispersal is not favorable to the persistence of the species. We further show when the Allee effect is large, nonlinear diffusion is more favorable to the survival of the population than linear diffusion. Moreover, the results of the PDE model extends our findings from discrete patches to continuous patches.

\end{abstract}

\keywords{nonlinear dispersal; Allee effect; stability; saddle-node bifurcation, patch model, reaction diffusion system}

\maketitle

\section{Introduction}

The conservation of biodiversity is a paramount issue of global scale \cite{Luo2022}. One way to protect endangered species is to create nature reserves or ``refuges" \cite{ref36}, where such species are safe and can breed healthy populations. Studies have shown that remote islands and mountainous regions often provide opportunities to protect endangered species \cite{ref37}. 
However, the destruction of the natural habitat of many species by human activities has resulted in ``fragmentation".
Habitat fragmentation is defined as the breaking up of a large intact area of a single vegetation type into smaller intact units \cite{Alan2002}. This leads to patch-level changes that  can negatively impact species diversity \cite{Keyghobadi2007, Fahrig2002, Fahrig2017, Fahrig2019}.  Also note that dispersal strategy, which is critical for estimating species success under fragmentation, is not well studied in the field \cite{Rohwader2022}. For example, different dispersal speeds may change how easily a species can travel between patches or increase the likelihood of leaving resource-filled patches to avoid competition or predation \cite{Debinski2000}. Numerous studies have shown that building "bridges" between patches that allow groups to communicate with each other can help communities to endure\cite{ref1, ref2}. Thus, studies of dispersal behavior, particularly between habitat patches are informative for the conservation of endangered species \cite{ref5, ref6, ref7}. To this end, the linear diffusion model is well known. For example, in\cite{ref34}, the author proposed the following two models, i.e., with discrete patches,
$$
\frac{\text{d}u^j}{\text{d}t}=u^j\left( a^j-b^ju \right) +\sum_{k=1}^m{D\left( u^k-u^j \right) ,\ j=1,\cdots ,\ m,}
$$
and continuous patches,
$$
\frac{\partial u}{\partial t}=u\left( a-bu \right) +\nabla(D \nabla u),
$$
where $D,\ a^j,\ b^j,\ a$ and $b$ are positive constants. In these models, linear diffusion means that the species can move randomly.

Notice that random movement is reasonable only in some cases (e.g., oceanic plankton\cite{ref35}). Gurney and Nisbet\cite{ref33} studied a biased random motion model as follows.
$$
\frac{\partial u}{\partial t}=ru+D_1\nabla ^2u+D_2\nabla \left( u\nabla u \right) ,
$$
where $r$ is the intrinsic birth rate, $D_1$ is the random dispersal rate and $D_2$  is a positive constant depending upon the proportionality between bias and density gradient. In this model, the authors suggest that the movement of individual populations is largely random, but is influenced to a small extent by the overall distribution of peers. The authors considered that members of a population walk pseudo-randomly in a rectangular network, and the probability distribution of each step is slightly distorted by the local population density gradient, thus the intensity of diffusion is $D_1\nabla ^2u+D_2\nabla \left( u\nabla u \right)$. Later, Allen\cite{ref34} came up with a population modeled by 'pure' biased diffusion, i.e., the discrete patches one
$$
\frac{\text{d}u^j}{\text{d}t}=u^j\left( a^j-b^ju \right) +\sum_{k=1}^m{Du^j\left( u^k-u^j \right) ,\ j=1,\cdots ,\ m,}
$$
and continuous patches,
$$
\frac{\partial u}{\partial t}=u\left( a-bu \right) +\nabla(Du\nabla u).
$$
The biased diffusion model was formulated under the assumption that the population density affects the diffusion rate\cite{ref11, ref12}. In other words, the diffusion rate is governed by the population density. 

The Allee effect \cite{ref15, ref16} plays an important role in population dynamics, and it can be divided into strong Allee effect and weak Allee effect. And strong Allee effect can lead to extinction. Liu et al. \cite{ref22, ref23} studied the influence of strong and weak Allee effect on Leslie-Gower models. Additive Allee effect, an Allee effect involving both strong and weak Allee effect, can be written in the form
$$
\frac{\text{d}u}{\text{d}t}=u\left( 1-u-\frac{a}{m+x} \right).
$$
Lv et al.\cite{ref21} studied the effect of additive Allee effect on an SI epidemic model. In order to protect endangered species, it is particularly important to study the Allee effect on patch models. Chen et al. \cite{ref24} studied the influence of Allee effect of a two-patch model with linear dispersal:
$$
\begin{aligned}
	\frac{\text{d}u}{\text{d}t}&=u\left( 1-u-\frac{a}{m+x} \right) +D_2v-D_1u,\\
	\frac{\text{d}v}{\text{d}t}&=-v+D_1u-D_2v,
\end{aligned}
$$
where $a$ and $m$ are the Allee effect constants. $D_1$ and $D_2$ are dispersal rates of the two patches. Their study suggests that dispersal and Allee effect may lead to the persistence or disappearance of the population in both patches. Wang \cite{ref25} studied the global stability of following two-patch models with Allee effect:
$$
\begin{aligned}
	\frac{\text{d}u}{\text{d}t}=r_1u\left( 1-\frac{u}{K_1}-\frac{a_1}{m_1+u} \right) +D_2v-D_1u,\\
	\frac{\text{d}v}{\text{d}t}=r_2v\left( 1-\frac{v}{K_2}-\frac{a_2}{m_2+v} \right) +D_1u-D_2v,
\end{aligned}
$$
Wang demonstrated that moderate dispersal to the better patch facilitates growth in total population density in the face of strong Allee effects. Many other papers have studied the strong Allee effect and weak Allee effect\cite{ref38, ref39, ref40, ref41}.

In \cite{ref26}, the authors proposed that the following single-species model with Allee effect: $$\frac{\text{d}u}{\text{d}t}=u\left( \frac{ru}{A+u}-d-bu \right),$$ where $r$ is the maximum birth rate, $A$ represents the strength of the Allee effect, $d$ is natural mortality and $b$ denotes the death rate due to intro-prey competition. It is well known that this type of Allee effect is the strong Allee effect, increasing the risk of extinction. It will be interesting to study the effects of diffusion on species with this Allee effect. As we know, there has no study of the Allee effect on a two-patch model with nonlinear dispersal, for this motivation in this paper, we will study a two-patch model with Allee effect and nonlinear dispersal as follows.
\begin{equation}\label{1.1}
	\begin{aligned}
		\frac{\text{d}u}{\text{d}t}&=u\left( \frac{ru}{A+u}-d-bu \right) +Du\left( v-u \right),\\
		\frac{\text{d}v}{\text{d}t}&=v\left( a-cv \right) +Dv\left( u-v \right),
	\end{aligned}
\end{equation}
where $u, v$ are the densities of the population in the first patch and the second patch, respectively. $A$ is the Allee effect constant. $r, d$ and $b$ are birth rate, natural mortality and death rate due to intro-prey competition of population in the first patch, respectively. $a$ and $c$ are intrinsic growth rate and death rate due to intro-prey competition of population in the second patch, respectively. $D$ is the dispersal coefficient. 

It is worth mentioning that human intervention has boosted biodiversity in protected areas from the adversities caused by an Allee effect. Thus, in model (\ref{1.1}), we assume that the population in first patch is affected by the Allee effect, while the population in second patch is free of an Allee effect and is consistent with normal logistic growth. Moreover, it is well known that the natural habitats of many species are fragmented due to human intervention and exploitation. Thus some patches are continuous while others are discrete, so it is important to consider both the ODE and PDE scenarios, while modeling such phenomenon.

As far as we are aware, this is the first time that both nonlinear dispersal and the Allee effect on the population dynamics of a species in a two-patch model has been considered. Although Wang has previously investigated the effect of the strong Allee effect on a patch model in \cite{ref25}, he did not consider the case in which diffusion between patches is nonlinear, and this article would be a good companion study to his research. By comparing the difference between nonlinear and linear dispersal, we conclude that nonlinear diffusion is more conducive to persistence in a fragmented environment. 

The rest of this paper is organized as follows. In Section 2, the ODE case of model (\ref{1.2}) is introduced. And in this section, the existence and stability of equilibrium of model (\ref{1.2}) are proved; the condition for saddle-node bifurcation to occur is proved and the effects of Allee effect and nonlinear dispersal are given. In Section 3, the PDE case of model (\ref{1.2}) is introduced. And the effects of Allee effect and nonlinear dispersal in PDE case are also given. We end this paper with a conclusion in Section 4.

\section{The ODE Case}
\subsection{Existence and stability of equilibrium}
In order to simplify system (\ref{1.1}), let

$$
\bar{u}=\frac{cu}{a},\ \bar{v}=\frac{cv}{a},\ \tau =rt
$$
and
$$
m=\frac{Ac}{a},\ e=\frac{d}{r},\ h=\frac{ab}{cr},\ \delta =\frac{Da}{cr},\ s=\frac{a}{r}.
$$
We still reserve $u$, $v$, $t$ to express $\bar{u},\ \bar{v},\ \tau$, respectively. Then, we get the following simplified system:
\begin{equation}\label{1.2}
	\begin{aligned}
		\frac{\text{d}u}{\text{d}t}&=u\left( \frac{u}{m+u}-e-hu \right) +\delta u\left( v-u \right),\\
		\frac{\text{d}v}{\text{d}t}&=sv\left( 1-v \right) +\delta v\left( u-v \right),
	\end{aligned}
\end{equation}
with the initial conditions: $\ u\left( 0 \right) \geq 0,\ v\left( 0 \right) \geq 0.$ In the above, $0<e<1$ and $m$, $h$, $\delta$, $s$ are all positive constants. The existence and stability of all nonnegative equilibria of model (\ref{1.2}) are proved as follows, respectively.

\textbf{(i)} The trivial equilibrium $E_0\left( 0,\ 0 \right)$ and boundary
equilibrium $E_v\left( 0,\ \frac{s}{s+\delta} \right)$ always exist.

\textbf{(ii)} Existence of the equilibrium $\bar E(\bar u, 0)$ on the $u$ coordinate axis where $\bar u$ satisfies the following equation:
\begin{equation}\label{2.1}
	\begin{aligned}
		\left( h+\delta \right) \bar{u}^2+\left[ m\left( h+\delta \right) +e-1 \right] \bar{u}+me=0.
	\end{aligned}
\end{equation}
If $m\ge \frac{1-e}{h+\delta}$, equation (\ref{2.1}) obviously has no positive root. Following we investigate the case $m<\frac{1-e}{h+\delta}$. Notice that the discriminant of (\ref{2.1}) is $\Delta_1(m)=(h+\delta)^2 m^2-2(1+e)(h+\delta)m+(1-e)^2$. The discriminant of $\Delta_1(m)$ is $\Delta_2=16e(h+\delta)^2>0$. Thus, $\Delta_1(m)=0$ has two positive real roots:
\begin{equation*}
	\begin{aligned}
		m_0:=\frac{\left( 1-\sqrt{e} \right) ^2}{h+\delta},\ m_1:=\frac{\left( 1+\sqrt{e} \right) ^2}{h+\delta}.
	\end{aligned}
\end{equation*}
From $0<e<1$, we can easily get $m_0<\frac{1-e}{h+\delta}<m_1$. Thus, if $m_0<m<\frac{1-e}{h+\delta}$, it follows that $\Delta_1(m)<0$ and then equation (\ref{2.1}) has no positive real root.

\textbf{(iii)} Existence of the positive equilibrium point: from model (\ref{1.2}) we know that the positive equilibrium $E\left( u,\ v \right)$ satisfy the following equation:
\begin{equation*}
	\begin{aligned}
		\left\{ \begin{array}{l}
			\frac{u}{m+u}-e-hu+\delta \left( v-u \right) =0,\\
			s\left( 1-v \right) +\delta \left( u-v \right) =0.\\
		\end{array} \right.
	\end{aligned}
\end{equation*}

Denote $B=\frac{s\delta}{s+\delta}$. The above follows that:
\begin{equation}\label{2.2}
	\begin{aligned}
		\left( h+B \right) u^2+\left[ m\left( h+B \right) +e-1-B \right] u+m\left( e-B \right) =0.
	\end{aligned}
\end{equation}
If $e=B$, equation (\ref{2.2}) becomes
\begin{equation}\label{2.3}
	\begin{aligned}
		u\left[ \left( h+B \right) u+m\left( h+B \right) -1 \right] =0.
	\end{aligned}
\end{equation}
Therefore, if $m\ge \frac{1}{h+B}$, there is no positive equilibrium; if $m<\frac{1}{h+B}$, equation (\ref{2.3}) has a unique positive real root. If $B<e<1$, equation (\ref{2.2}) has a unique positive real root. Following we investigate the case $B<e<1$. Notice that the discriminant of (\ref{2.2}) is $\varDelta _3\left( m \right) =\left( h+B \right) ^2m^2-2\left( e-B+1 \right) \left( h+B \right) m+\left( e-B-1 \right) ^2$. The discriminant of $\varDelta _3(m)$ is $\varDelta _4=16\left( e-B \right) \left( h+B \right) ^2>0$. Thus $\varDelta _3(m)=0$ has two positive real roots:
\begin{equation*}
	\begin{aligned}
		m^*:=\frac{\left( 1-\sqrt{e-B} \right) ^2}{h+B},\ m_{1}^{*}:=\frac{\left( 1+\sqrt{e-B} \right) ^2}{h+B}.
	\end{aligned}
\end{equation*}
From $B<e<1$, we get $m^*<\frac{1+B-e}{h+B}<m_{1}^{*}$. Therefore, if $m^*<m<\frac{1+B-e}{h+B}$, it follows that $\varDelta _3\left( m \right) <0$ and then equation (\ref{2.2}) has no positive real root.

~\\

\begin{theorem} (1) There are two equilibria on the positive coordinate axis of $u$: $E_{\bar{u}_1}\left( \bar{u}_1,\ 0 \right)$ and $E_{\bar{u}_2}\left( \bar{u}_2,\ 0 \right)$ when $0<m<m_0$.

(2) There is a unique equilibrium on the positive coordinate axis of $u$: $E_{\bar{u}_3}\left( \bar{u},\ 0 \right)$ when $m=m_0$. And
\begin{equation*}
	\begin{aligned}
		\bar{u}_1&=\frac{1-e-m\left( h+\delta \right) +\sqrt{\varDelta _1(m)}}{2\left( h+\delta \right)},\ \bar{u}_2=\frac{1-e-m\left( h+\delta \right) -\sqrt{\varDelta _1(m)}}{2\left( h+\delta \right)},\\
		\bar{u}_3&=\frac{1-e-m\left( h+\delta \right)}{2\left( h+\delta \right)}.
	\end{aligned}
\end{equation*}
\end{theorem}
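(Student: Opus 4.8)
The statement is entirely a matter of analyzing the quadratic equation (\ref{2.1}), namely $(h+\delta)\bar u^2+[m(h+\delta)+e-1]\bar u+me=0$, whose leading coefficient $h+\delta$ is positive. The plan is to combine the sign information on the discriminant $\varDelta_1(m)$ already recorded in the excerpt with Vieta's formulas to pin down the number and sign of the roots. First I would note that $\varDelta_1(m)$ is an upward-opening parabola in $m$ with the two positive zeros $m_0<m_1$; hence $\varDelta_1(m)>0$ precisely when $m<m_0$ or $m>m_1$, and in particular $\varDelta_1(m)>0$ for every $m\in(0,m_0)$. This gives two distinct real roots $\bar u_1,\bar u_2$ in that range, with the explicit expressions stated in the theorem obtained from the usual quadratic formula.

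Next I would verify both roots are strictly positive. By Vieta's formulas the product of the roots is $\dfrac{me}{h+\delta}$, which is strictly positive since $m,e,h,\delta>0$; therefore the two roots share the same sign. Their sum is $\dfrac{1-e-m(h+\delta)}{h+\delta}$, which is positive as long as $m<\dfrac{1-e}{h+\delta}$. Since we are in the regime $0<m<m_0$ and the excerpt already establishes $m_0<\dfrac{1-e}{h+\delta}$, this inequality holds, so the common sign is positive. This proves part (1): $E_{\bar u_1}(\bar u_1,0)$ and $E_{\bar u_2}(\bar u_2,0)$ both exist.

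For part (2), I would set $m=m_0$. Then $\varDelta_1(m_0)=0$, so (\ref{2.1}) has the unique (double) root $\bar u_3=\dfrac{1-e-m_0(h+\delta)}{2(h+\delta)}$, which is exactly the displayed formula. It remains only to check positivity: substituting $m_0=\dfrac{(1-\sqrt e)^2}{h+\delta}$ gives $1-e-m_0(h+\delta)=1-e-(1-\sqrt e)^2=2\sqrt e-2e=2\sqrt e\,(1-\sqrt e)>0$ because $0<e<1$. Hence $\bar u_3>0$ and $E_{\bar u_3}(\bar u_3,0)$ is the unique equilibrium on the positive $u$-axis when $m=m_0$.

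I do not anticipate a genuine obstacle here; the only point requiring a little care is the positivity of the roots, which hinges on using the chain of inequalities $0<m<m_0<\dfrac{1-e}{h+\delta}$ to control the sign of the root-sum, rather than attempting to show positivity directly from the (less transparent) closed-form expressions.
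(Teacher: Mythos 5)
Your proposal is correct and follows essentially the same route as the paper: the paper's argument (given in the discussion preceding the theorem) also analyzes the quadratic (\ref{2.1}) via the sign of $\varDelta_1(m)$ and the location of its zeros $m_0<m_1$ relative to $\frac{1-e}{h+\delta}$, with positivity of the roots handled by the same sign considerations you make explicit through Vieta's formulas. Your version merely spells out the positivity checks that the paper dismisses as obvious.
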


\begin{theorem} (1) If $e<B$, there is a unique positive equilibrium $E_1\left( u_1,\ v_1 \right)$.

\noindent (2) If $e=B$, there is a unique positive equilibrium $E_1\left( u_1,\ v_1 \right)$ when $m<\frac{1}{h+B}$; there is no positive equilibrium when $m\ge \frac{1}{h+B}$.

\noindent (3) If $B<e<1$,

(i) there are two positive equilibria $E_1\left( u_1,\ v_1 \right)$ and $E_2\left( u_2,\ v_2 \right)$ when $m<m^*$;

(ii) there is a unique positive equilibrium $E_3\left( u_3,\ v_3 \right)$ when $m=m^*$;

(iii) there is no positive equilibrium when $m>m^*$. And
\begin{equation*}
	\begin{aligned}
		u_1&=\frac{1+B-e-m\left( h+B \right) +\sqrt{\varDelta _3(m)}}{2\left( h+B \right)}, u_2=\frac{1+B-e-m\left( h+B \right) -\sqrt{\varDelta _3(m)}}{2\left( h+B \right)},\\
		u_3&=\frac{1+B-e-m\left( h+B \right)}{2\left( h+B \right)}.
	\end{aligned}
\end{equation*}
\end{theorem}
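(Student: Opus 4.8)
The plan is to reduce the search for positive equilibria to the single scalar quadratic $(\ref{2.2})$ and then extract everything from Vieta's relations together with the discriminant bookkeeping already carried out in the paragraph preceding the statement. First I would solve the second equilibrium equation for $v$, obtaining $v=\frac{s+\delta u}{s+\delta}$, which is positive for every $u\ge 0$; substituting this back and using the identity $\delta(v-u)=B(1-u)$, the first equilibrium equation, after multiplying through by $m+u>0$, becomes exactly $(\ref{2.2})$. Hence a positive equilibrium $E(u,v)$ of $(\ref{1.2})$ corresponds one-to-one with a positive root $u$ of $(\ref{2.2})$, the second coordinate being recovered from the displayed formula. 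Writing $P(u):=(h+B)u^2+\left[m(h+B)+e-1-B\right]u+m(e-B)$, its leading coefficient $h+B$ is positive, $P(0)=m(e-B)$, the product of its two roots equals $\frac{m(e-B)}{h+B}$, and their sum equals $\frac{1+B-e-m(h+B)}{h+B}$.

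The three cases then follow from the sign of $P(0)$, i.e. the position of $e$ relative to $B$. If $e<B$ (part (1)) then $P(0)<0$, which forces the discriminant to be positive and the two real roots to have opposite signs, so exactly one root is positive and it is the unique $E_1$. If $e=B$ (part (2)), $(\ref{2.2})$ collapses to $(\ref{2.3})$, whose nontrivial root $\frac{1-m(h+B)}{h+B}$ is positive iff $m<\frac{1}{h+B}$, which is precisely the dichotomy already recorded before the theorem. If $B<e<1$ (part (3)) then $P(0)>0$, so the product of roots is positive and the two roots, whenever real, share a common sign, namely the sign of their sum; both are therefore positive iff $m<\frac{1+B-e}{h+B}$. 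Combining this with the discriminant facts from the excerpt — $\varDelta_3(m)$ is an upward parabola in $m$ with zeros $m^*<\frac{1+B-e}{h+B}<m_1^*$, so $\varDelta_3(m)\ge 0$ iff $m\le m^*$ or $m\ge m_1^*$ — I get: for $m<m^*$ two distinct real roots, both positive because the sum is positive, so two equilibria $E_1,E_2$; for $m=m^*$ a positive double root $u_3$, hence the unique $E_3$; and for $m>m^*$ no positive root, since either $m^*<m<m_1^*$ and $\varDelta_3(m)<0$ so there are no real roots, or $m\ge m_1^*$ and the roots are real but their sum $1+B-e-m(h+B)$ is negative, making both roots negative. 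The explicit expressions for $u_1,u_2,u_3$ are then just the quadratic formula applied to $(\ref{2.2})$.

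The only point that needs genuine care is the end of part (3): the paragraph before the theorem only explicitly records $\varDelta_3(m)<0$ on $\left(m^*,\frac{1+B-e}{h+B}\right)$, so the complementary range $m\ge m_1^*$, where the discriminant becomes nonnegative again, must be handled separately — and it is precisely the negativity of the sum of the roots there that rules out positive equilibria. Everything else is a mechanical consequence of Vieta's formulas and the already-supplied discriminant computations and needs no new estimates; the statement about the $u$-axis equilibria (the preceding theorem) is proved the same way from $(\ref{2.1})$ and $\Delta_1(m)$, the constant term $me$ now being positive for all $m>0$, which is why only the analogue of case (3) appears there.
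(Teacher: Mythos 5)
Your proposal is correct and follows essentially the same route as the paper: reduce to the quadratic $(\ref{2.2})$ via $v=\frac{s+\delta u}{s+\delta}$, then sort the cases by the sign of the constant term $m(e-B)$ and the discriminant $\varDelta_3(m)$ with its roots $m^*<\frac{1+B-e}{h+B}<m_1^*$. Your explicit Vieta argument for the range $m\ge m_1^*$ (real roots but negative sum, hence no positive root) actually supplies a detail the paper's preceding paragraph leaves implicit, since it only records $\varDelta_3(m)<0$ on $\bigl(m^*,\frac{1+B-e}{h+B}\bigr)$.
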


Next, we consider the local stability of the equilibrium point. The Jacobian matrix of system (\ref{1.2}) at any point $E\left( u,\ v \right)$ is
\begin{equation}\label{2.4}
	\begin{aligned}
		J_E=\left( \begin{matrix}
			j_{11}&		j_{12}\\
			j_{21}&		j_{22}\\
		\end{matrix} \right),
	\end{aligned}
\end{equation}
where 
$$j_{11}=\frac{\left( 2m+u \right) u}{\left( m+u \right) ^2}-2\left( h+\delta \right) u-e+\delta v,\ j_{12}=\delta u,\ j_{21}=\delta v,\ s-2\left( s+\delta \right) v+\delta u.
$$

\begin{theorem} (1) $E_0\left( 0,\ 0 \right)$ is always a saddle.

\noindent (2) If $B<e<1$, $E_v\left( 0,\ \frac{s}{s+\delta} \right)$ is locally stable; if $e<B$, $E_v\left( 0,\ \frac{s}{s+\delta} \right)$ is a saddle.

\noindent (3) If $e=B$,

(i) $E_v\left( 0,\ \frac{s}{s+\delta} \right)$ is an attracting saddle-node, and the parabolic sector is on the right half-plane when $m>\frac{1}{h+B}$;

(ii)$E_v\left( 0,\ \frac{s}{s+\delta} \right)$ is an attracting saddle-node, and the hyperbolic sector is on the right half-plane when $m<\frac{1}{h+B}$;

(iii) $E_v$ is a stable node when $m=\frac{1}{h+B}$.
\end{theorem}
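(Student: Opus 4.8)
\emph{Proof plan.} For (1) and (2) I would argue directly from the spectrum at the two equilibria. Evaluating the Jacobian (\ref{2.4}) at $E_0=(0,0)$ gives the diagonal matrix with entries $-e$ and $s$; since $-e<0<s$ the eigenvalues have opposite signs, so $E_0$ is a saddle. Evaluating (\ref{2.4}) at $E_v=\bigl(0,\frac{s}{s+\delta}\bigr)$ and using $\delta\cdot\frac{s}{s+\delta}=B$ gives a lower--triangular matrix with diagonal entries $B-e$ and $-s$, whose eigenvalues are therefore $B-e$ and $-s<0$. Hence $E_v$ is a saddle when $e<B$ and is locally asymptotically stable when $B<e<1$, which is (2).

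Part (3) is the delicate case: when $e=B$ the matrix at $E_v$ has eigenvalues $0$ and $-s$, so $E_v$ is non-hyperbolic and a center--manifold reduction is needed. I would first translate $E_v$ to the origin via $\tilde u=u$, $\tilde v=v-\frac{s}{s+\delta}$; the eigenvector for the zero eigenvalue is $(s,B)$ and the one for $-s$ is $(0,1)$, so I would pass to the transverse coordinate $w=\tilde v-\frac{B}{s}\tilde u$, in which the linear part is diagonal with entries $0$ and $-s$. Retaining the factored form $\dot u=u\,g_1$ with $g_1(u,v)=\frac{u}{m+u}-e-hu+\delta(v-u)$, expanding $\frac{u}{m+u}=\frac{u}{m}-\frac{u^{2}}{m^{2}}+\cdots$, and using $e=B$ together with $\delta\frac{s}{s+\delta}=B$, a short computation gives $g_1=a\,u+\delta\,w-\frac{u^{2}}{m^{2}}+\cdots$, where
$$a=\frac{1}{m}-h-B=\frac{1-m(h+B)}{m},$$
so that $\dot u=a\,u^{2}+\delta\,u\,w+\cdots$ and $\dot w=-s\,w+q\,u^{2}+\cdots$; using the identity $B(s+\delta)=s\delta$ one checks that $q=-\frac{B}{s}a$, so $q$ vanishes exactly when $a$ does.

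Next I would invoke the center manifold theorem to get a local invariant manifold $w=\psi(u)$ with $\psi(u)=\frac{q}{s}u^{2}+O(u^{3})=-\frac{Ba}{s^{2}}u^{2}+O(u^{3})$, on which the reduced dynamics is $\dot u=a\,u^{2}+O(u^{3})$. If $m\neq\frac{1}{h+B}$ then $a\neq0$ and the leading term is of even order, so by the standard classification of a degenerate equilibrium with a simple zero eigenvalue $E_v$ is a saddle--node, attracting because the transverse eigenvalue $-s$ is negative; when $a<0$, i.e.\ $m>\frac{1}{h+B}$, the center--manifold trajectories with $u>0$ approach $E_v$ whereas those with $u<0$ recede, putting the parabolic sector in the half-plane $u>0$ (case (i)), and when $a>0$, i.e.\ $m<\frac{1}{h+B}$, the roles reverse and the hyperbolic sector lies in $u>0$ (case (ii)). If $m=\frac{1}{h+B}$ then $a=0$, hence $q=0$ and $\psi(u)=O(u^{3})$; substituting into $\dot u=u\,g_1$ with $g_1=-\frac{u^{2}}{m^{2}}+O(u^{3})$ on the center manifold yields $\dot u=-\frac{1}{m^{2}}u^{3}+O(u^{4})$, and the negative cubic coefficient makes the center--manifold flow asymptotically stable, so with the transverse stable direction $E_v$ is a stable node (case (iii)).

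The main obstacle I anticipate is the algebraic bookkeeping in the reduction: one must verify that the quadratic coefficient of the $w$-equation is exactly $-\frac{B}{s}a$ --- this is where the identity $B(s+\delta)=s\delta$ is crucial --- so that the center manifold loses its quadratic term precisely when $a=0$, and that at that parameter value the reduced equation collapses to the clean cubic $\dot u=-m^{-2}u^{3}+\cdots$; a secondary point is keeping track of the sign of $a$, which decides whether the parabolic or the hyperbolic sector occupies the biologically relevant half-plane $u>0$.
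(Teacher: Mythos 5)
Your proposal is correct. Parts (1) and (2) coincide with the paper's argument (same Jacobians, same eigenvalue reading). For part (3) you take a genuinely different, though equivalent, route: the paper translates $E_v$ to the origin, applies the linear change of variables $\left( U_1,V_1\right)\mapsto\left( x_1,y_1\right)$ together with the time reversal $t_1=-st$ to reach the canonical form of Theorem 7.1 in \cite{ref27}, and then reads off the classification from the sign of the coefficient $q_0=\frac{B}{m}\left[ m(h+B)-1\right]$ (and, when $q_0=0$, from the cubic coefficient $q_2=\frac{s}{m^2B}$ after solving the implicit function for $y_1$); you instead perform an explicit center-manifold reduction in the coordinate $w=\tilde v-\frac{B}{s}\tilde u$ and classify from the leading term of the reduced equation. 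I checked your key identities: using $B(s+\delta)=s\delta$ one indeed gets $-\delta+\frac{\delta B}{s}=-B$, so $a=\frac{1-m(h+B)}{m}$, and the quadratic $u^2$-contributions $\frac{\delta B}{s}-(s+\delta)\frac{B^2}{s^2}$ in the $w$-equation cancel, leaving $q=-\frac{B}{s}a$ exactly as you claim; hence the center manifold loses its quadratic term precisely when $a=0$ and the reduced flow collapses to $\dot u=-m^{-2}u^3+\cdots$. Your sign conventions are consistent with the paper's once one accounts for its time reversal ($a$ has the opposite sign of $q_0$ up to a positive factor), so the parabolic/hyperbolic sector assignments in (i)--(ii) and the stable node in (iii) all match. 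What your approach buys is self-containedness (no appeal to the normal-form theorem, and the sector geometry is read directly from the sign of $\dot u=au^2$ on the center manifold); what the paper's buys is a ready-made citation that dispenses with justifying the topological classification once the canonical form is reached. Both are standard and complete.
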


\begin{proof} (1) From (\ref{2.4}), the Jacobian matrix at $E_0\left( 0,\ 0 \right)$ is
\begin{equation*}
	\begin{aligned}
		J_{E_0}=\left( \begin{matrix}
			-e&		0\\
			0&		s\\
		\end{matrix} \right),
	\end{aligned}
\end{equation*}
it follows that $E_0\left( 0,\ 0 \right)$ is a saddle.

(2) The Jacobian matrix at $E_v\left( 0,\ \frac{s}{s+\delta} \right)$ is
\begin{equation*}
	\begin{aligned}
		J_{E_v\left( 0,\ \frac{s}{s+\delta} \right)}=\left( \begin{matrix}
			B-e&		0\\
			B&		-s\\
		\end{matrix} \right)
	\end{aligned}
\end{equation*}
thus, $E_v\left( 0,\ \frac{s}{s+\delta} \right)$ is a saddle when $e<B$, while $E_v\left( 0,\ \frac{s}{s+\delta} \right)$ is locally stable when $e>B$.

(3) If $e=B$, $J_{E_v\left( 0,\ \frac{s}{s+\delta} \right)}$ has a unique zero eigenvalue. Let  $U_1=u,\ V_1=v-\frac{s}{s+\delta}$, model (\ref{1.2}) can be transformed to the following system:
\begin{equation*}
	\begin{aligned}
		\frac{\text{d}U_1}{\text{d}t}&=U_1\left( \frac{U_1}{m+U_1}-e-hU_1 \right) +\delta U_1\left( V_1+\frac{s}{s+\delta}-U_1 \right),\\
		\frac{\text{d}V_1}{\text{d}t}&=s\left( V_1+\frac{s}{s+\delta} \right) \left( 1-V_1-\frac{s}{s+\delta} \right) +\delta \left( V_1+\frac{s}{s+\delta} \right) \left( U_1-V_1-\frac{s}{s+\delta} \right).
	\end{aligned}
\end{equation*}
Applying the Taylor expansion of $\frac{1}{m+U_1}$ at the origin, it can be rewritten as
\begin{equation}\label{2.5}
	\begin{aligned}
		\frac{\text{d}U_1}{\text{d}t}&=-\left( h+\delta -\frac{1}{m} \right) U_{1}^{2}+\delta U_1V_1+\frac{1}{m^2}U_{1}^{3}+G\left( U_1 \right),\\
		\frac{\text{d}V_1}{\text{d}t}&=BU_1-sV_1-\left( s+\delta \right) V_{1}^{2}+\delta U_1V_1,
	\end{aligned}
\end{equation}
where $G\left( U_1 \right)$ denotes the power series with term $U_{1}^{j}$ satisfying $j>3$. The Jacobian matrix of system (\ref{2.5}) at the origin is
\begin{equation*}
	\begin{aligned}
		J_0=\left( \begin{matrix}
			0&		0\\
			B&		-s\\
		\end{matrix} \right),
	\end{aligned}
\end{equation*}
Then we make the following transformation:
\begin{equation*}
	\begin{aligned}
		\left( \begin{array}{c}
			U_1\\
			V_1\\
		\end{array} \right) =\left( \begin{matrix}
			\frac{s}{B}&		0\\
			1&		1\\
		\end{matrix} \right) \left( \begin{array}{c}
			x_1\\
			y_1\\
		\end{array} \right), t_1=-st,
	\end{aligned}
\end{equation*}
model (\ref{2.5}) becomes

\begin{equation}\label{2.6}
	\begin{aligned}
		\frac{\text{d}x_1}{\text{d}t_1}&=q_0x_{1}^{2}+q_1x_1y_1+q_2x_{1}^{3}+G_1\left( x_1,\ y_1 \right),\\
		\frac{\text{d}y_1}{\text{d}t_1}&=y_1+p_0x_{1}^{2}+p_1x_1y_1+p_2y_{1}^{2}+p_3x_{1}^{3}+G_2\left( x_1,\ y_1 \right),
	\end{aligned}
\end{equation}
where $G_1,\ G_2$ denote the power series with term $x_{1}^{i}y_{1}^{j}$ satisfying $i+j>3$ and
\begin{equation*}
	\begin{aligned}
		q_0&=\frac{B}{m}\left[ m\left( B+h \right) -1 \right] ,\ q_1=-\frac{\delta}{s},\ q_2=\frac{s}{m^2B},
		p_0=-\frac{B}{m}\left[ m\left( B+h \right) -1 \right] ,\\
		p_1&=\frac{2\delta}{s}+1,\ p_2=\frac{\delta}{s}+1,\ p_3=-\frac{s}{m^2B}.
	\end{aligned}
\end{equation*}
If $m\left( h+B \right) >1$ (or $m\left( h+B \right) <1$), we can see that the coefficient of $x_{1}^{2}$  is greater than zero (or less than zero). Applying Theorem 7.1 in \cite{ref27}, we know $E_v\left( 0,\ \frac{s}{s+\delta} \right)$ is an attracting saddle-node, and the parabolic (hyperbolic) sector is on the right half-plane when $q_0>0$ $(q_0<0)$. If $q_0=0$, i.e. $m=\frac{1}{h+B}$, system (\ref{2.6}) becomes
\begin{equation*}
	\begin{aligned}
		\frac{\text{d}x_1}{\text{d}t_1}&=q_1x_1y_1+q_2x_{1}^{3}+G_1\left( x_1,\,\,y_1 \right), \\
		\frac{\text{d}y_1}{\text{d}t_1}&=y_1+p_0x_{1}^{2}+p_1x_1y_1+p_2y_{1}^{2}+p_3x_{1}^{3}+G_2\left( x_1,\,\,y_1 \right).
	\end{aligned}
\end{equation*}
Then we can obtain the implicit function
\begin{equation*}
	\begin{aligned}
		y_1=-p_3x_{1}^{3}+G_3\left( x_1 \right),
	\end{aligned}
\end{equation*}
where $G_3\left( x_1 \right)$ denotes the power series with term $x_{1}^{i}$, $i>3$. Then
\begin{equation*}
	\begin{aligned}
		\frac{\text{d}x_1}{\text{d}t}=q_2x_{1}^{3}+G_4\left( x_1 \right) ,
	\end{aligned}
\end{equation*}
where $G_4\left( x_1 \right)$ denotes the power series with term $x_{1}^{i}$, $i>3$ and $q_2\ne 0$. According to Theorem 7.1 in \cite{ref27} again, and combining the previous time changes, it is clear that $E_v\left( 0,\ \frac{s}{s+\delta} \right)$ is a stable node. \end{proof}

\begin{theorem} (1) Both $E_{\bar{u}_1}\left( \bar{u}_1,\ 0 \right)$ and $E_{\bar{u}_2}\left( \bar{u}_2,\ 0 \right)$ are unstable when $m<m_0$;

\noindent (2) $E_{\bar{u}_3}\left( \bar{u}_3,\ 0 \right)$ is a repelling saddle-node.
\end{theorem}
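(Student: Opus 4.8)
The first part is a one-line eigenvalue computation; the second is a center-manifold/normal-form analysis in the spirit of the proof of Theorem 2.3.

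\textbf{Part (1).} The plan is to read the eigenvalues straight off the Jacobian $(\ref{2.4})$ on the $u$-axis. At any point $(\bar{u},0)$ one has $j_{21}=\delta v=0$, so $J$ is upper triangular with eigenvalues $j_{11}$ and $j_{22}=s+\delta\bar{u}$. Since $s,\delta>0$ and, by Theorem 2.1, $\bar{u}_1,\bar{u}_2>0$ when $0<m<m_0$, the eigenvalue $s+\delta\bar{u}_i$ is strictly positive, so neither $E_{\bar{u}_1}$ nor $E_{\bar{u}_2}$ can be stable. To also identify the type, note that on $\{v=0\}$ the first equation reads $\dot u=u\,g(u)$ with $g(u):=\frac{u}{m+u}-e-(h+\delta)u$, that the left-hand side of $(\ref{2.1})$, viewed as a polynomial $P(u)$, equals $-(m+u)\,g(u)$, and hence, using $g(\bar{u}_i)=0$, that $j_{11}=\bar{u}_i\,g'(\bar{u}_i)$ with $\operatorname{sign}g'(\bar{u}_i)=-\operatorname{sign}P'(\bar{u}_i)$. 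As $P$ is an upward parabola with $P'(\bar{u}_1)=\sqrt{\Delta_1(m)}>0$ and $P'(\bar{u}_2)=-\sqrt{\Delta_1(m)}<0$, one gets $j_{11}(\bar{u}_1)<0$ (so $E_{\bar{u}_1}$ is a saddle) and $j_{11}(\bar{u}_2)>0$ (so $E_{\bar{u}_2}$ is an unstable node); both are unstable.

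\textbf{Part (2).} Here $m=m_0$, so $\Delta_1(m_0)=0$ and $\bar{u}_3$ is the double root of $(\ref{2.1})$; thus $P'(\bar{u}_3)=0$, $g'(\bar{u}_3)=0$, and $j_{11}=\bar{u}_3\,g'(\bar{u}_3)=0$, so the Jacobian at $E_{\bar{u}_3}$ is $\left(\begin{matrix} 0 & \delta\bar{u}_3 \\ 0 & \lambda \end{matrix}\right)$ with $\lambda:=s+\delta\bar{u}_3>0$: one zero eigenvalue and one positive eigenvalue. I would then repeat the normal-form computation used for $E_v$. Translate $U=u-\bar{u}_3$, $V=v$ and Taylor expand $\frac{1}{m_0+\bar{u}_3+U}$ at $U=0$; since $g(\bar{u}_3)=g'(\bar{u}_3)=0$, the $U$-equation loses its linear $U$-term and becomes $\dot U=\delta\bar{u}_3 V+\frac{1}{2}\bar{u}_3 g''(\bar{u}_3)U^2+\delta UV+O(3)$, with $\dot V=\lambda V+\delta UV-(s+\delta)V^2$. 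Diagonalize the linear part to $\operatorname{diag}(0,\lambda)$ by the change of variables whose columns are the eigenvectors $(1,0)$ and $(\delta\bar{u}_3/\lambda,\,1)$, and rescale $t_1=\lambda t$; the system then takes the standard form $\dot x_1=q_0 x_1^2+\cdots$, $\dot y_1=y_1+\cdots$ with $q_0=\frac{1}{2\lambda}\bar{u}_3 g''(\bar{u}_3)$. Because $g''(\bar{u}_3)=-\frac{2m_0}{(m_0+\bar{u}_3)^3}<0$ and $\bar{u}_3=\frac{\sqrt{e}\,(1-\sqrt{e})}{h+\delta}>0$, we have $q_0\neq0$. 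By Theorem 7.1 in \cite{ref27}, $E_{\bar{u}_3}$ is a saddle-node; and since the surviving eigenvalue $\lambda$ is positive while the rescaling $t_1=\lambda t$ preserves the time direction (unlike the reversal $t_1=-st$ used for $E_v$), reversing time turns $E_{\bar{u}_3}$ into an attracting saddle-node, so $E_{\bar{u}_3}$ itself is a repelling saddle-node.

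\textbf{The main obstacle.} Part (1) is elementary. The work lies in the bookkeeping for part (2): one must check that $q_0$ really is the leading coefficient on the center manifold --- the $\delta UV$, $V^2$ and cubic terms all become $x_1 y_1$, $y_1^2$ or higher, hence $O(x_1^3)$ once $y_1=\phi(x_1)=O(x_1^2)$ is substituted --- that $q_0\neq0$ so the order of contact is exactly two, and that the sign of the surviving eigenvalue $\lambda$ is tracked so the outcome is ``repelling'' rather than ``attracting''. Verifying $\bar{u}_3>0$, $g''(\bar{u}_3)\neq0$, and that the remainder terms are honestly higher order is routine.
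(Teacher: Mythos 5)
Your proof is correct and takes essentially the same route as the paper: part (1) reads the eigenvalues off the triangular Jacobian and uses $s+\delta\bar u_i>0$ (you additionally pin down saddle versus unstable node via the sign of $j_{11}$, which the paper only asserts is nonzero), and part (2) is the same translate--diagonalize--apply Theorem 7.1 of \cite{ref27} computation. Your $q_0=\tfrac{1}{2\lambda}\bar u_3 g''(\bar u_3)=-\sqrt{e}\,(h+\delta)/(s+\delta\bar u_3)$ differs from the paper's stated $q_0=-(h+\delta)\sqrt{e}/(1-\sqrt{e})$ only by a positive normalization factor coming from the choice of eigenvector scaling, so the sign and the resulting ``repelling saddle-node'' conclusion agree.
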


\begin{proof} From (\ref{2.4}), the Jacobian matrix at $E_{\bar{u}_i}$ is
\begin{equation*}
	\begin{aligned}
		J_{E_{\bar{u}_i}}=\left( \begin{matrix}
			\bar{u}_i\left[ \frac{m}{\left( m+\bar{u}_i \right) ^2}-\left( h+\delta \right) \right]&		\delta \bar{u}_i\\
			0&		s+\delta \bar{u}_i\\
		\end{matrix} \right)
	\end{aligned}
\end{equation*}

(1)  $\theta _{1_i}=\bar{u}_i\left[ \frac{m}{\left( m+\bar{u}_i \right) ^2}-\left( h+\delta \right) \right] \ne 0,\ \theta _{2_i}=s+\delta \bar{u}_i>0$ are two eigenvalues of $J_{E_{\bar{u}_i}}, i=1,2.$ Therefore, both $E_{\bar{u}_1}$ and $E_{\bar{u}_2}$ are unstable.

(2) If $m=m_0$, $J_{E_{\bar{u}_3}}$ has a unique zero eigenvalue. Let $U_2=u-\bar{u}_3,\ V_2=v$, model (\ref{1.2}) can be transformed to the following system:
\begin{equation}\label{2.7}
	\begin{aligned}
		\frac{\text{d}U_2}{\text{d}t}&=\left( U_2+\bar{u}_3 \right) \left( \frac{U_2+\bar{u}_3}{m+U_2+\bar{u}_3}-e-h\bar{u}_3-hU_2 \right) +\delta \left( U_2+\bar{u}_3 \right) \left( V_2-U_2-\bar{u}_3 \right),\\
		\frac{\text{d}V_2}{\text{d}t}&=sV_2\left( 1-V_2 \right) +\delta V_2\left( U_2+\bar{u}_3-V_2 \right),
	\end{aligned}
\end{equation}
Using the same method as Theorem 2.3, system (\ref{2.7}) can be transformed into a form similar to (\ref{2.6}). After complicated calculation, we get $q_0=-\left( h+\delta \right) \frac{\sqrt{e}}{1-\sqrt{e}}<0$. Applying Theorem 7.1 in \cite{ref27}, $E_{\bar{u}_3}\left( \bar{u}_3,\ 0 \right)$ is a repelling saddle-node, Theorem 2.4 is proved. 
\end{proof}

\begin{theorem} (1) $E_1\left( u_1,\ v_1 \right)$ is stable.

\noindent (2) $E_2\left( u_2,\ v_2 \right)$ is always a saddle.

\noindent (3) $E_3\left( u_3,\ v_3 \right)$ is an attracting saddle-node.
\end{theorem}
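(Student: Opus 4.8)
The plan is to linearize \eqref{1.2} at each positive equilibrium and use the two equilibrium relations to reduce the Jacobian \eqref{2.4} to a form whose trace and determinant can be signed. At any positive equilibrium $E(u,v)$ the second equation gives $s+\delta u=(s+\delta)v$, so $j_{22}=s-2(s+\delta)v+\delta u=-(s+\delta)v<0$; and substituting $\frac{u}{m+u}=e+(h+\delta)u-\delta v$ (from the first equation) into $j_{11}$, the $e$- and $v$-terms cancel and one is left with $j_{11}=u\bigl[\frac{m}{(m+u)^2}-(h+\delta)\bigr]$. Using the identity $(s+\delta)(h+\delta)-\delta^2=(s+\delta)(h+B)$ with $B=\frac{s\delta}{s+\delta}$, this yields
\begin{align*}
\mathrm{tr}\,J_E&=u\Bigl[\tfrac{m}{(m+u)^2}-(h+\delta)\Bigr]-(s+\delta)v,\\
\det J_E&=-uv(s+\delta)\Bigl[\tfrac{m}{(m+u)^2}-(h+B)\Bigr].
\end{align*}

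Next I would tie $\operatorname{sgn}(\det J_E)$ to the quadratic $g(u):=(h+B)u^2+[m(h+B)+e-1-B]u+m(e-B)$ from \eqref{2.2}, whose positive roots are the $u$-coordinates of $E_1,E_2,E_3$. Setting $\phi(u):=\frac{u}{m+u}-(e-B)-(h+B)u$, one checks $(m+u)\phi(u)=-g(u)$ and $\phi'(u)=\frac{m}{(m+u)^2}-(h+B)$; hence at a root $u^*$ of $g$ one has $\phi'(u^*)=-g'(u^*)/(m+u^*)$, and therefore $\det J_E=\frac{uv(s+\delta)}{m+u^*}\,g'(u^*)$, so $\operatorname{sgn}(\det J_E)=\operatorname{sgn}(g'(u^*))$. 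Since $g$ is an upward parabola, $g'(u_1)>0$ at its larger root $u_1$ (and a direct check gives $g'(u_1)>0$ in the degenerate cases $e\le B$ where $E_1$ is the unique positive root), while $g'(u_2)<0$ at its smaller root $u_2$. Thus $\det J_{E_2}<0$ and $E_2$ is a saddle, proving (2). For (1), since $B=\frac{s\delta}{s+\delta}<\delta$ we get $\frac{m}{(m+u_1)^2}-(h+\delta)<\frac{m}{(m+u_1)^2}-(h+B)<0$, the last inequality being $\det J_{E_1}>0$; hence $j_{11}<0$, so $\mathrm{tr}\,J_{E_1}<0$, and together with $\det J_{E_1}>0$ this makes $E_1$ hyperbolic and locally asymptotically stable.

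For (3), at $m=m^*$ the discriminant $\varDelta_3(m^*)$ vanishes, $g$ has a double root $u_3$ with $g'(u_3)=0$, hence $\det J_{E_3}=0$ and one eigenvalue is $0$; the other equals $\mathrm{tr}\,J_{E_3}=u_3(B-\delta)-(s+\delta)v_3<0$ since now $\frac{m}{(m+u_3)^2}=h+B$. To pin down the local phase portrait I would run the same normal-form procedure as in the proofs of Theorems 2.3 and 2.4: translate $E_3$ to the origin, Taylor-expand $\frac{1}{m+u}$, apply a linear change of variables bringing the linear part to the Jordan form with a zero eigenvalue, reduce onto the one-dimensional center manifold, and extract the coefficient $q_0$ of the surviving quadratic term. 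One expects $q_0$ to be a negative multiple of $h+B$ (mirroring $q_0=-(h+\delta)\frac{\sqrt e}{1-\sqrt e}$ found for $E_{\bar u_3}$), in particular $q_0\ne0$; then Theorem 7.1 in \cite{ref27} classifies the origin as a saddle-node, and because the transverse eigenvalue $\mathrm{tr}\,J_{E_3}$ is negative the parabolic sector is attracting, i.e. $E_3$ is an attracting saddle-node.

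The main obstacle is precisely this last computation of $q_0$ for $E_3$: the algebra is heavy (the authors already call the analogous step for $E_{\bar u_3}$ ``complicated''), so I would keep it under control by systematically using $g(u_3)=g'(u_3)=0$ together with $g''\equiv 2(h+B)\ne0$ to simplify intermediate expressions. Alternatively one can argue structurally — $E_3$ is the fold point at which the stable node $E_1$ and the saddle $E_2$ collide as $m\uparrow m^*$, and the branch of $\det J$ along the equilibrium curve crosses zero with nonzero derivative — which forces a nondegenerate (codimension-one) saddle-node; either way the crux is checking $q_0\ne0$, equivalently that the coalescence at $m=m^*$ is a genuine fold rather than a degenerate higher-order singularity.
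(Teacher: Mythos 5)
Your proposal is correct and follows essentially the same route as the paper: reduce $j_{11}$ and $j_{22}$ via the equilibrium relations, sign the trace and determinant of the Jacobian for $E_1$ and $E_2$, and classify $E_3$ by the same normal-form/center-manifold reduction used for Theorem 2.3 (your identity $\operatorname{sgn}\det J_E=\operatorname{sgn}g'(u^*)$ is a clean way to carry out the sign check the paper dismisses as ``a simple calculation''). The one piece you leave open --- the nonvanishing of the quadratic coefficient $q_0$ at $E_3$ --- is exactly the nondegeneracy the paper itself verifies later in Theorem 2.8, where $\boldsymbol{\beta}^TD^2F\left(E_3;\ m^*\right)\left(\boldsymbol{\alpha},\ \boldsymbol{\alpha}\right)=-\sqrt{e-B}\left(h+B\right)\neq 0$, so your structural fold argument closes correctly and matches the paper's level of detail for part (3).
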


\begin{proof}

(1)  From (\ref{2.4}), the determinant and the trace of $J_{E_1}$ are
\begin{equation*}
	\begin{aligned}
		Det\left( J_{E_1} \right) &=u_1\left( s+u_1 \right) \left[ h+B-\frac{m}{\left( m+u_1 \right) ^2} \right],\\
		Tr\left( J_{E_1} \right) &=\frac{mu_1}{\left( m+u_1 \right) ^2}-\left( h+2\delta \right) u_1-s.
	\end{aligned}
\end{equation*}
After a simple calculation, we get $Det\left( J_{E_1} \right) >0,\ Tr\left( J_{E_1} \right) <0$, thus, $E_1$ is locally stable.

(2) From Theorem 2.2, if $B<e<1$ and $m<m^*$, then $E_2\left( u_2,\ v_2 \right)$ exists. The determinant of $J_{E_2}$ is
\begin{equation*}
	\begin{aligned}
		Det\left( J_{E_2} \right) =\frac{\left( h+B \right) u_2\left( s+\delta u_2 \right)}{\left( m+u_2 \right) ^2}\left[ \left( m+u_2 \right) ^2-m\left( h+B \right) \right],
	\end{aligned}
\end{equation*}
After a simple calculation, we get $\left( m+u_2 \right) ^2-m\left( h+B \right) <0$, which means $Det\left( J_{E_2} \right) <0$ and $E_2\left( u_2,\ v_2 \right)$ is a saddle.

(3) When $B<e<1$ and $m=m^*$, $E_3\left( u_3,\ v_3 \right)$ exists. Then $\gamma _1=0$ and $\gamma _2=s+\delta u_3>0$ are two eigenvalues of $J_{E_3}$. Using the same method as Theorem 2.3, we know $E_3\left( u_3,\ v_3 \right)$ is an attracting saddle-node, Theorem 2.5 is proved.
\end{proof}

The existence and stability conditions for all equilibria are given in Table 1.

\begin{table}[H]
	\caption{Existence and local stability of all equilibrium.\label{tab1}}
	\newcolumntype{C}{>{\centering\arraybackslash}X}
	\tabcolsep=0.8cm
	\renewcommand\arraystretch{3}
	\resizebox{\columnwidth}{!}{\begin{tabularx}{\textwidth}{ccc}
			\toprule
			\textbf{Equilibrium}	& \textbf{Existence}	& \textbf{Stability}\\
			\midrule
			$E_0\left( 0,\ 0 \right)$		& always			& saddle\\
			$E_v\left( 0,\ \frac{s}{s+\delta} \right)$		& always			&
			\makecell{$B<e<1$,\ stable\\
				$e<B$,\ saddle\\
				$e=B$,\ stable node or\\ 
				attracting saddle-node}
			\\
			$E_{\bar{u}_1}\left( \bar{u}_1,\ 0 \right) ,\ E_{\bar{u}_2}\left( \bar{u}_2,\ 0 \right)$		& $m<m_0$			& unstable\\
			$E_{\bar{u}_3}\left( \bar{u}_3,\ 0 \right)$		& $m=m_0$			& repelling saddle-node\\
			$E_{u_1}\left( u_1,\ v_1 \right)$		& \makecell{$B<e<1, m<m^*$ or \\$e=B, m<\frac{1}{h+B}$ or \\$e<B$ } 			& stable\\
			$E_{u_2}\left( u_2,\ v_2 \right)$		& $B<e<1, m<m^*$			& saddle\\
			$E_{u_3}\left( u_3,\ v_3 \right)$		& $B<e<1, m=m^*$			& attracting saddle-node\\
			
			\bottomrule
	\end{tabularx}}
\end{table}

\begin{theorem} The boundary equilibria $E_v\left( 0,\ \frac{s}{s+\delta} \right)$ is globally asymptotically stable when $B<e<1,\ m>m^*$.
\end{theorem}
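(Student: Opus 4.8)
The plan is to combine dissipativity of the vector field with the Poincaré--Bendixson theorem, exploiting the fact that in the regime $B<e<1$, $m>m^*$ the system has \emph{no} equilibrium in the open first quadrant, so the only candidate $\omega$-limit sets are the two boundary equilibria $E_0$ and $E_v$. First I would record that the nonnegative quadrant is positively invariant (the lines $u=0$ and $v=0$ are invariant) and that every solution is bounded: setting $W=u+v$, using $\frac{u}{m+u}<1$ and $2\delta uv\le\delta u^2+\delta v^2$, one gets $\dot W\le u(1-e)-hu^2+sv-sv^2$, hence $\dot W\le -W+K$ for a constant $K>0$. Therefore every solution exists for all $t\ge0$ and its $\omega$-limit set is a nonempty, compact, connected subset of the nonnegative quadrant.

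Second, I would pin down the equilibria. Since $B=\frac{s\delta}{s+\delta}<\delta$ we have $h+B<h+\delta$, and since $B<e<1$ we have $0<\sqrt{e-B}<\sqrt e<1$, so $(1-\sqrt e)^2<(1-\sqrt{e-B})^2$; hence $m^*=\frac{(1-\sqrt{e-B})^2}{h+B}>\frac{(1-\sqrt e)^2}{h+\delta}=m_0$. Thus $m>m^*$ forces $m>m_0$, so by Theorem 2.1 there is no equilibrium on the positive $u$-axis, and by Theorem 2.2(3)(iii) there is no positive equilibrium. The only equilibria are $E_0$ (a saddle, Theorem 2.3(1)) and $E_v$ (locally asymptotically stable, Theorem 2.3(2)).

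Third, I would rule out periodic orbits. A periodic orbit of a planar system encloses a bounded region whose equilibrium indices sum to $+1$, hence it must enclose at least one equilibrium; moreover, since the coordinate axes are invariant, a periodic orbit lying in the open first quadrant encloses a region lying in the open first quadrant. As there is no equilibrium there, no such periodic orbit exists. (When $m\ge\frac1{h+\delta}$ one may alternatively use the Dulac function $\varphi=\frac1{uv}$, for which $\partial_u(\varphi\dot u)+\partial_v(\varphi\dot v)=\frac1v\big[\frac{m}{(m+u)^2}-h-\delta\big]-\frac{s+\delta}{u}<0$, but the index argument covers all cases.)

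Finally, by Poincaré--Bendixson the $\omega$-limit set of any bounded solution contains no periodic orbit, so it is either a single equilibrium or a union of equilibria joined by connecting orbits; since $E_v$ is a sink and the only unstable orbit of $E_0$ runs up the invariant $v$-axis into $E_v$, there is no homoclinic loop or heteroclinic cycle, and any $\omega$-limit set meeting $E_v$ equals $\{E_v\}$. Hence every bounded solution converges to $E_0$ or to $E_v$. Since $E_0$ is a saddle with Jacobian $\mathrm{diag}(-e,s)$ whose stable manifold is exactly the invariant positive $u$-axis, a solution with $v(0)>0$ cannot converge to $E_0$, so it converges to $E_v$; with the local asymptotic stability of $E_v$ this yields the global asymptotic stability claimed. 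The step I expect to be the main obstacle is ruling out periodic orbits, since a single Dulac function does not obviously keep a sign on the whole open quadrant; that is why I would route through the Poincaré index theorem and the absence of interior equilibria rather than through Dulac's criterion.
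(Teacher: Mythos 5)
Your proof is correct and follows essentially the same route as the paper's: establish positive invariance and boundedness, note that for $B<e<1$, $m>m^*$ there is no interior equilibrium while $E_v$ is locally asymptotically stable, exclude closed orbits, and conclude via Poincar\'e--Bendixson. The differences are in the supporting steps: the paper obtains boundedness by comparison with an auxiliary cooperative logistic system and an externally cited theorem, whereas you use the direct dissipativity estimate on $W=u+v$; and you make explicit several points the paper leaves implicit --- the index-theorem argument behind ``no interior equilibrium implies no periodic orbit,'' the observation that $m>m^*>m_0$ also removes the equilibria on the positive $u$-axis, the exclusion of homoclinic or heteroclinic graphics in the Poincar\'e--Bendixson trichotomy, and the stable-manifold argument showing an orbit with $v(0)>0$ cannot limit onto the saddle $E_0$. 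These additions do not change the method but do close genuine gaps in the published argument, which asserts global stability directly from local stability plus absence of limit cycles without addressing the other possible $\omega$-limit sets.
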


\begin{proof}
For model (\ref{1.2}), it is easy to know $\left. \frac{\text{d}u}{\text{d}t} \right|_{u=0}=0,\ \left. \frac{\text{d}v}{\text{d}t} \right|_{v=0}=0,$ which means $u=0$ and $v=0$ are the invariant set of model (\ref{1.2}). Thus, all the solutions of model (\ref{1.2}) are nonnegative. Considering following equations:
\begin{equation*}
	\begin{aligned}
		\frac{\text{d}u}{\text{d}t}\le &u\left( 1-e-hu \right) +\delta u\left( v-u \right),\\
		\frac{\text{d}v}{\text{d}t}=&sv\left( 1-v \right) +\delta v\left( u-v \right).
	\end{aligned}
\end{equation*}
Applying comparison theorem of differential equations, we establish comparison equations:
\begin{equation*}
	\begin{aligned}
		\frac{\text{d}N_1}{\text{d}t}&=N_1\left( 1-e-hN_1 \right) +\delta N_1\left( N_2-N_1 \right),\\
		\frac{\text{d}N_2}{\text{d}t}&=sN_2\left( 1-N_2 \right) +\delta N_2\left( N_1-N_2 \right).
	\end{aligned}
\end{equation*}
From theorem 3.2 in \cite{ref5}, there are positive constants $M_1$ and $T_1$ that make $N_i\left( t \right) \le M$ for $\forall t>T_1$, i=1, 2. So all solutions of model (\ref{1.2}) are uniformly bounded. From Theorem 2.3 and Theorem 2.4, $E_v$ is locally asymptotically stable and there is no positive equilibria when $B<e<1,\ m>m^*$.  Therefore, there exists no limit cycle in the first quadrant. Thus $E_v\left( 0,\ \frac{s}{s+\delta} \right)$ is globally asymptotically stable. The proof of Theorem 2.6 is finished. 
\end{proof}

\begin{theorem} The positive equilibria $E_1\left( u_1,\ v_1 \right)$ is globally asymptotically stable when $e<B$ or $e=B, m<\frac{1}{h+B}$.
\end{theorem}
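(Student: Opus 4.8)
The plan is to prove that every solution of (\ref{1.2}) with $u(0)>0,\ v(0)>0$ converges to $E_1$, and then to combine this with the local asymptotic stability of $E_1$ from Theorem 2.5(1). As in the proof of Theorem 2.6, since $\frac{u}{m+u}<1$ we have $\dot u\le u(1-e-hu)+\delta u(v-u)$, so the solutions of (\ref{1.2}) are dominated by those of the comparison system used there and are therefore uniformly bounded; hence there is a compact rectangle $R=[0,M_1]\times[0,M_2]$ that is positively invariant and absorbing. Because $\{u=0\}$ and $\{v=0\}$ are invariant, the open first quadrant is positively invariant, and every interior solution has a nonempty compact $\omega$-limit set in $R$. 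By Theorem 2.2, under the standing hypothesis ($e<B$, or $e=B$ and $m<\frac{1}{h+B}$) the only positive equilibrium is $E_1$, and by Theorem 2.5(1) it is a hyperbolic sink ($Det(J_{E_1})>0,\ Tr(J_{E_1})<0$).

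\textbf{Uniform persistence.} I would next show that the coordinate axes form a uniform repeller, so that the $\omega$-limit set of any interior orbit stays in the open first quadrant. On $\{v=0\}$ the induced one-dimensional flow is $\dot u=u\!\left(\frac{u}{m+u}-e-(h+\delta)u\right)$, with rest points $E_0$ and, when $m<m_0$, the points $E_{\bar{u}_1},E_{\bar{u}_2}$ of (\ref{2.1}); on $\{u=0\}$ it is $\dot v=v\!\left(s-(s+\delta)v\right)$, with rest points $E_0$ and $E_v$ and every positive orbit tending to $E_v$. These boundary equilibria are isolated, and the boundary flow is acyclic: its only heteroclinic connections are $E_0\to E_v$ (on the $v$-axis) and $E_{\bar{u}_2}\to E_0$, $E_{\bar{u}_2}\to E_{\bar{u}_1}$ (on the $u$-axis), which form no cycle. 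Each is moreover a weak repeller for the interior: $E_0$ (a saddle, Theorem 2.3(1)) and $E_{\bar{u}_1},E_{\bar{u}_2}$ (Theorem 2.4, whose Jacobian is upper triangular with the only possibly negative eigendirection lying along the $u$-axis) have their stable sets inside the axes; and $E_v$ is a saddle with stable manifold the $v$-axis when $e<B$ (Theorem 2.3(2)), while when $e=B,\ m<\frac{1}{h+B}$ it is an attracting saddle-node whose hyperbolic sector occupies the half-plane $u>0$ (Theorem 2.3(3)(ii)), so no interior orbit tends to it either. A standard Butler--McGehee chasing argument then shows that no interior $\omega$-limit set meets the boundary; i.e.\ (\ref{1.2}) is uniformly persistent.

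\textbf{Absence of periodic orbits and conclusion.} The decisive structural fact is that (\ref{1.2}) is cooperative on the first quadrant: from (\ref{2.4}), $j_{12}=\delta u\ge0$ and $j_{21}=\delta v\ge0$, so its flow $\varphi_t$ is monotone for the componentwise order. It is classical that a monotone planar flow possesses no periodic orbit: if $\gamma$ were one, at its rightmost point $p$ one has $\dot u=0$ and $\dot v\ne0$, so a nearby point $\varphi_{\pm\varepsilon}(p)\in\gamma$ is strictly ordered relative to $p$, and monotonicity of $\varphi_t$ forces $\gamma$ to be totally ordered, which is impossible for a Jordan curve. Fix now $(u_0,v_0)$ with $u_0,v_0>0$ and let $\Omega$ be its $\omega$-limit set; by the previous steps $\Omega$ is a nonempty compact connected invariant subset of the open first quadrant. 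By the Poincar\'e--Bendixson theorem $\Omega$ is an equilibrium, a periodic orbit, or a union of equilibria and connecting orbits; periodic orbits are excluded, and since $E_1$ is the unique interior equilibrium and a hyperbolic sink, it supports neither a homoclinic loop nor a heteroclinic cycle. Hence $\Omega=\{E_1\}$, so the solution converges to $E_1$; together with Theorem 2.5(1), $E_1$ is globally asymptotically stable.

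\textbf{Main obstacle.} The hard part is the exclusion of limit cycles around $E_1$. A Dulac function of the usual form (e.g.\ $1/(uv)$) does not have a sign-definite divergence over the whole admissible parameter range---for example when $m<\frac{1}{h+\delta}$ the divergence changes sign near the $u$-axis---which is why I would lean on the cooperative (monotone) structure instead. The only other step needing care is the persistence argument, and it rests entirely on the classification of $E_v$ (a saddle, or an attracting saddle-node with hyperbolic sector on the side $u>0$) already supplied by Theorem 2.3.
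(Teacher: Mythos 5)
Your proof is correct, but it takes a genuinely different route from the paper's. The paper rules out closed orbits with the Dulac function $g(u,v)=\frac{1}{u^{2}v^{2}}$: one computes $\frac{\partial (gF_1)}{\partial u}+\frac{\partial (gF_2)}{\partial v}=\frac{e-s}{u^{2}v^{2}}-\frac{1}{(m+u)^{2}v^{2}}-\delta\left(\frac{1}{u^{2}v}+\frac{1}{uv^{2}}\right)$, which is strictly negative on the open first quadrant precisely because the hypothesis gives $e\le B=\frac{s\delta}{s+\delta}<s$; Bendixson--Dulac together with boundedness and the local analysis of $E_v$ and $E_1$ then finishes the argument. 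So your stated ``main obstacle'' is only half right: the divergence of $g=\frac{1}{uv}$ does change sign near the $u$-axis when $m<\frac{1}{h+\delta}$, but the extra factor $\frac{1}{uv}$ in the paper's choice removes exactly the offending $\frac{m}{(m+u)^{2}}$ contribution, and the condition $e\le B$ is what makes the surviving term sign-definite. Your substitute --- noting that $j_{12}=\delta u\ge 0$ and $j_{21}=\delta v\ge 0$ make (\ref{1.2}) cooperative on the first quadrant, so that no nonconstant periodic orbit can exist --- is valid and in some ways more robust: cooperativity holds for all parameter values, requires no cleverly chosen auxiliary function, and the planar monotone-systems theorem even gives convergence of every bounded orbit to an equilibrium directly, which would let you bypass Poincar\'e--Bendixson altogether. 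Your Butler--McGehee persistence step is also more explicit than the paper's one-line claim that $E_v$ is ``unstable in the first quadrant,'' and it correctly leans on the saddle-node classification of $E_v$ in the case $e=B$; the only item missing from your boundary inventory is the degenerate axis equilibrium $E_{\bar u_3}$ at $m=m_0$, whose stable set also lies in the $u$-axis (the transverse eigenvalue is $s+\delta\bar u_3>0$ and the center direction is the invariant axis), so the acyclicity argument is unaffected.
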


\begin{proof}
From Theorem 2.3 and Theorem 2.5, the unique positive equilibria $E_1\left( u_1,\ v_1 \right)$ is locally asymptotically stable and the boundary equilibria $E_v\left( 0,\ \frac{s}{s+\delta} \right)$ is unstable in the first quadrant when $e<B$ or $e=B, m<\frac{1}{h+B}$. Considering  the Dulac function $g\left( u,\ v \right) =\frac{1}{u^2v^2}$. Applying $e\le B<s$, we get
\begin{equation*}
	\begin{aligned}
		\frac{\partial \left( gF_1 \right)}{\partial u}+\frac{\partial \left( gF_2 \right)}{\partial v}=\frac{e-s}{u^2v^2}-M<0,
	\end{aligned}
\end{equation*}
where $M=\frac{1}{\left( m+u \right) ^2v^2}+\delta \left( \frac{1}{u^2v}+\frac{1}{uv^2} \right) >0$ and
\begin{equation}\label{2.8}
	\begin{aligned}
		F_1:&=u\left( \frac{u}{m+u}-e-hu \right) +\delta u\left( v-u \right),\\
		F_2:&=sv\left( 1-v \right) +\delta v\left( u-v \right).
	\end{aligned}
\end{equation}
Applying the Bendixson-Dulac discriminant, model (\ref{1.2}) has no limit cycle in the first quadrant. Coordinating the solution of system (\ref{1.2}) is ultimately bounded, we proved $E_1\left( u_1,\ v_1 \right)$ is globally asymptotically stable, the proof of Theorem 2.7 is finished. 
\end{proof}

\begin{remark} Theorem 2.6 shows that the level of the Allee constant $m$ has a large effect on the extinction of the population in the first patch when the intensity of dispersal is low. In detail, when $B<e$, i.e., $\delta<\frac{se}{s-e}(s>e),$ then the species in the first patch will go extinct at any initial value when $m>m^*$. The ecological significance of this result is that when the intensity of dispersal is low, if the birth rate of population in the first patch is affected by the strong Allee effect such that the population face severe difficulties in finding mates, then it will not be able to avoid extinction. However, when the intensity of dispersal is large, i.e., $\delta>\frac{se}{s-e}(s>e)$ which implies $B>e$, the species in both patches will be permanent even though the species in the first patch has strong Allee effect. In other words, nonlinear dispersal can be beneficial to the survival of the species.
\end{remark}

The phase diagram for model (\ref{1.2}) is given in Fig. 1 for the different parameter cases.

\begin{figure}[H]
	\centering
	\subfigure[$e<B$]{
		\includegraphics[width=2in, height=2in]{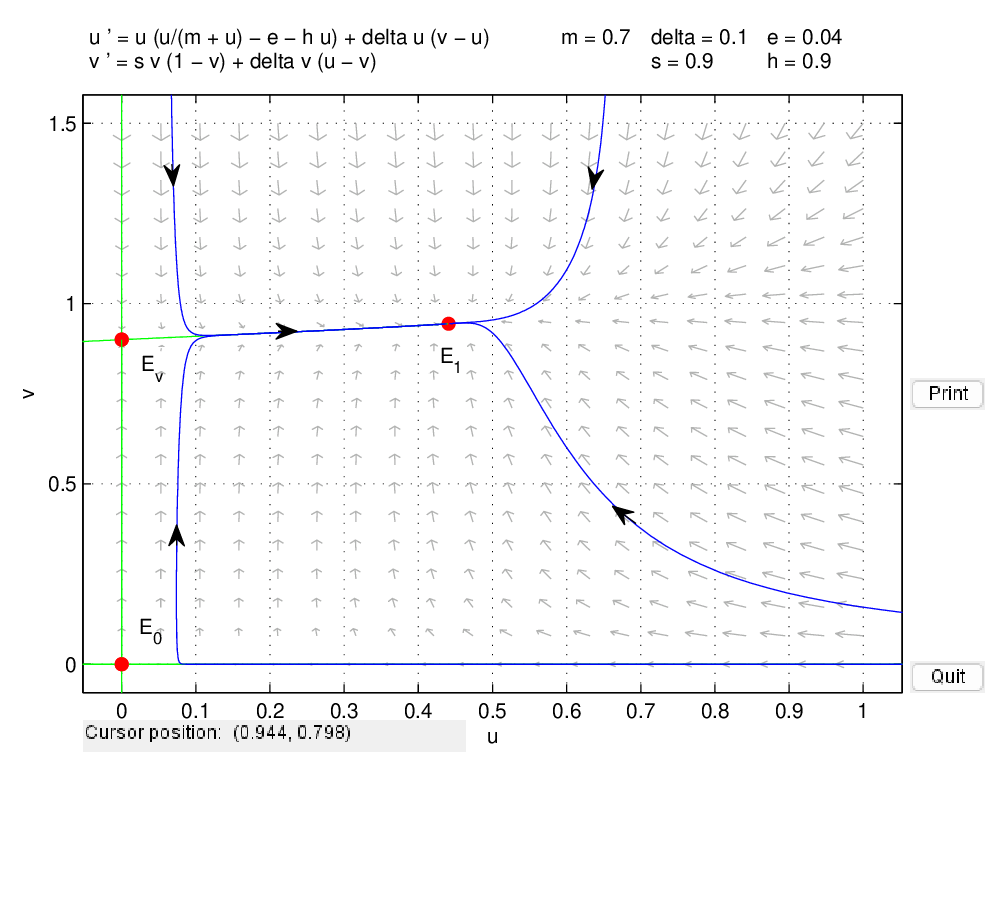}}
	\subfigure[$e=B$, $m>\frac{1}{h+B}$]{
		\includegraphics[width=2in, height=2in]{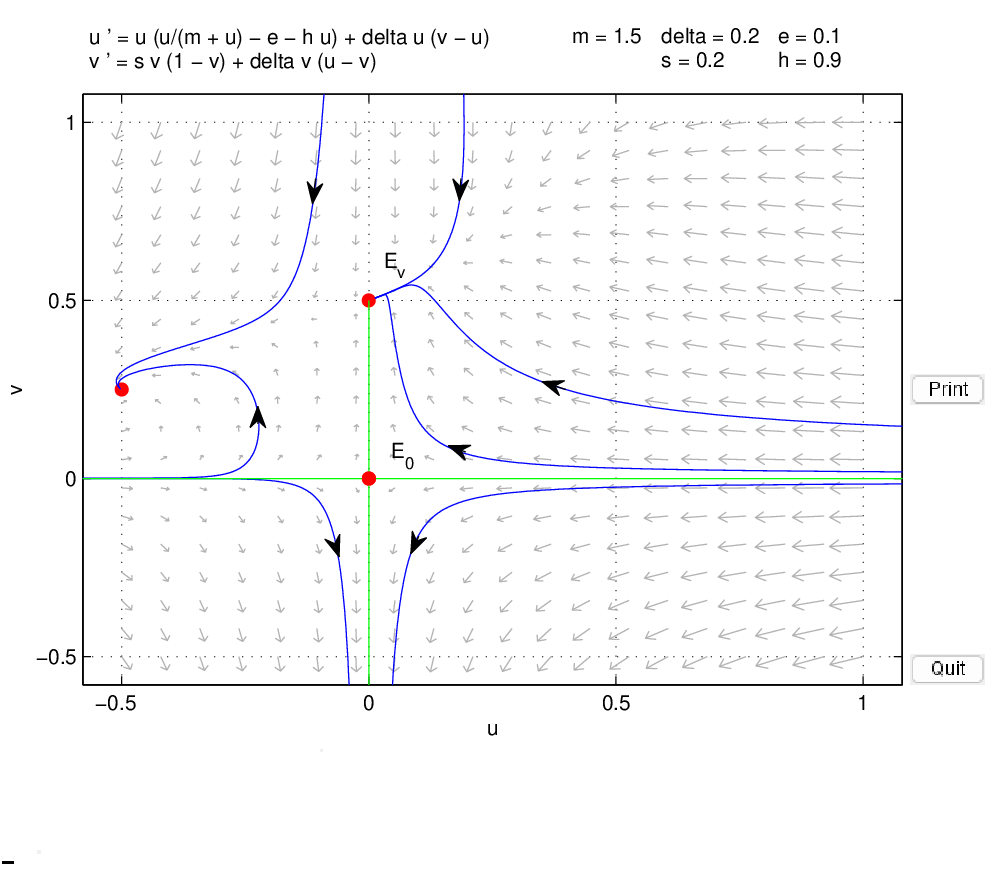}}	
	\subfigure[$e=B$, $m=\frac{1}{h+B}$]{
		\includegraphics[width=2in, height=2in]{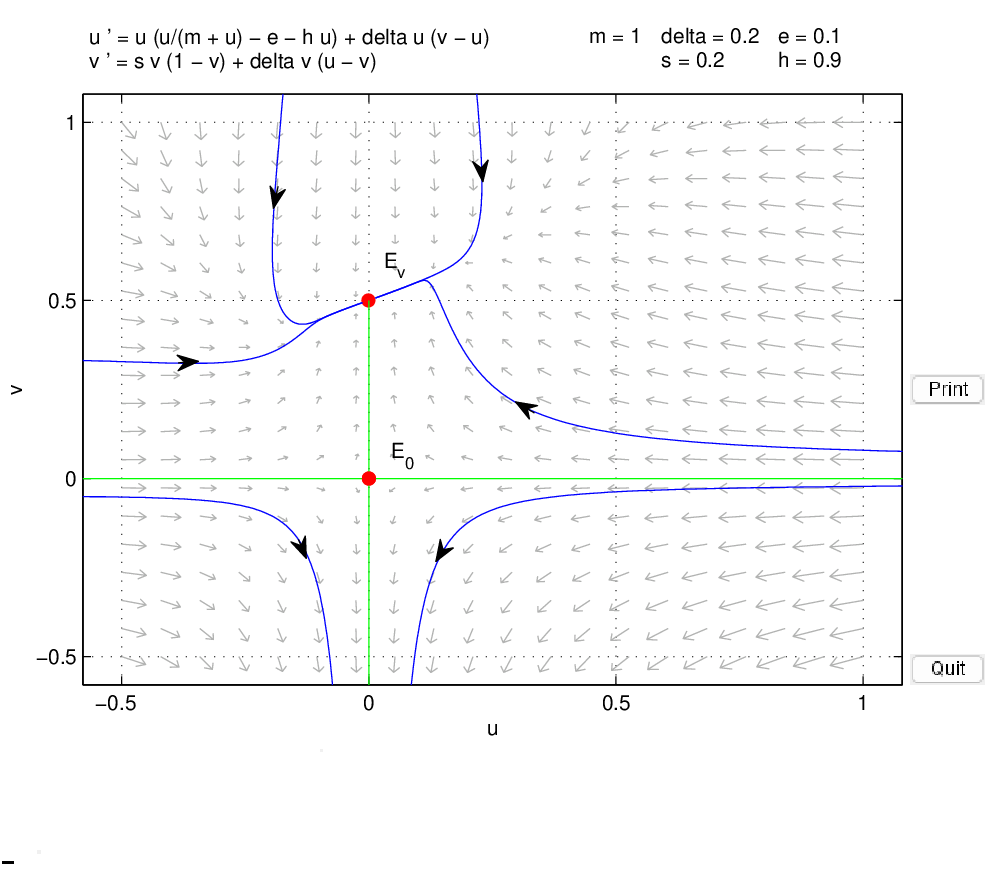}}	
	\subfigure[$e=B$, $m<\frac{1}{h+B}$]{
		\includegraphics[width=2in, height=2in]{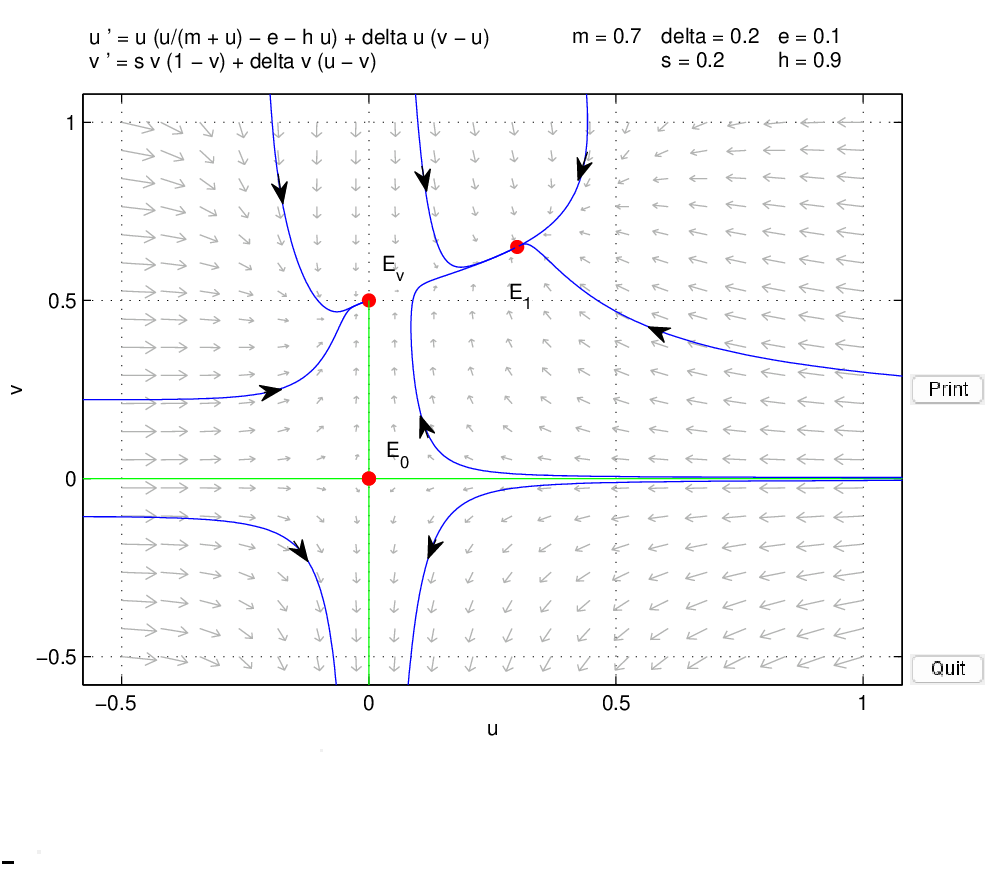}}	
	\subfigure[$B<e<1$, $0<m<m_0$]{
		\includegraphics[width=2in, height=2in]{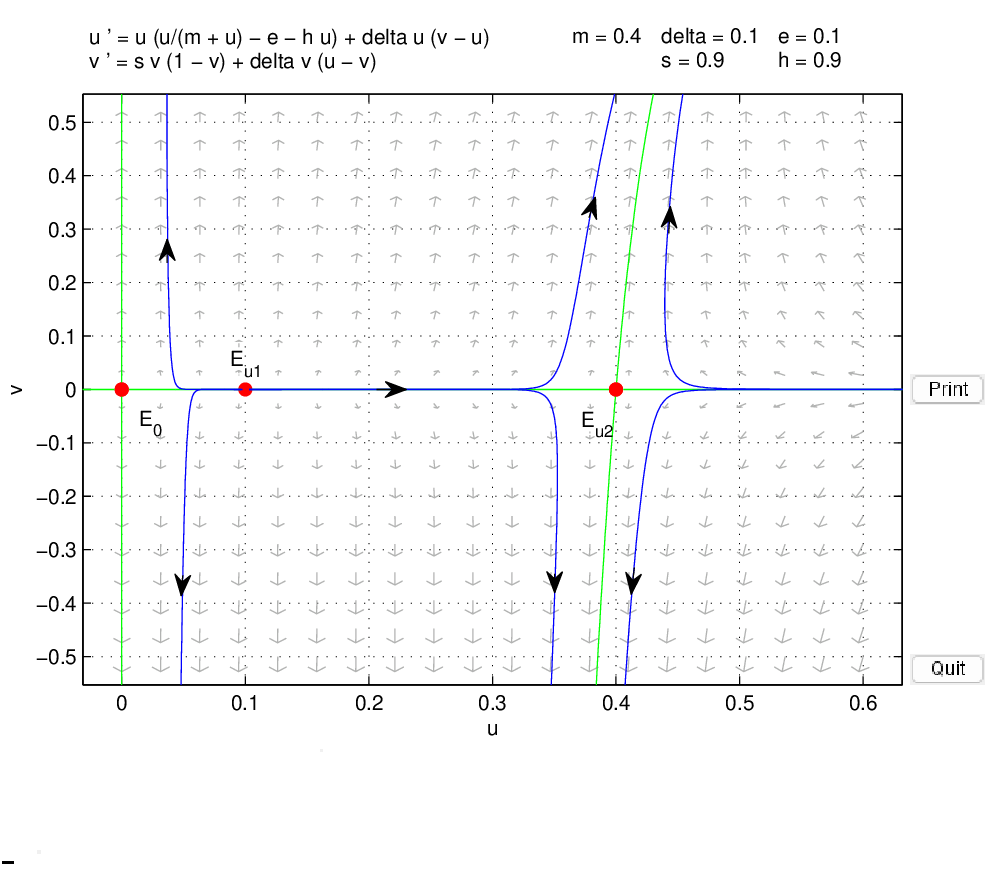}}	
	\subfigure[$B<e<1$, $m=m_0$]{
		\includegraphics[width=2in, height=2in]{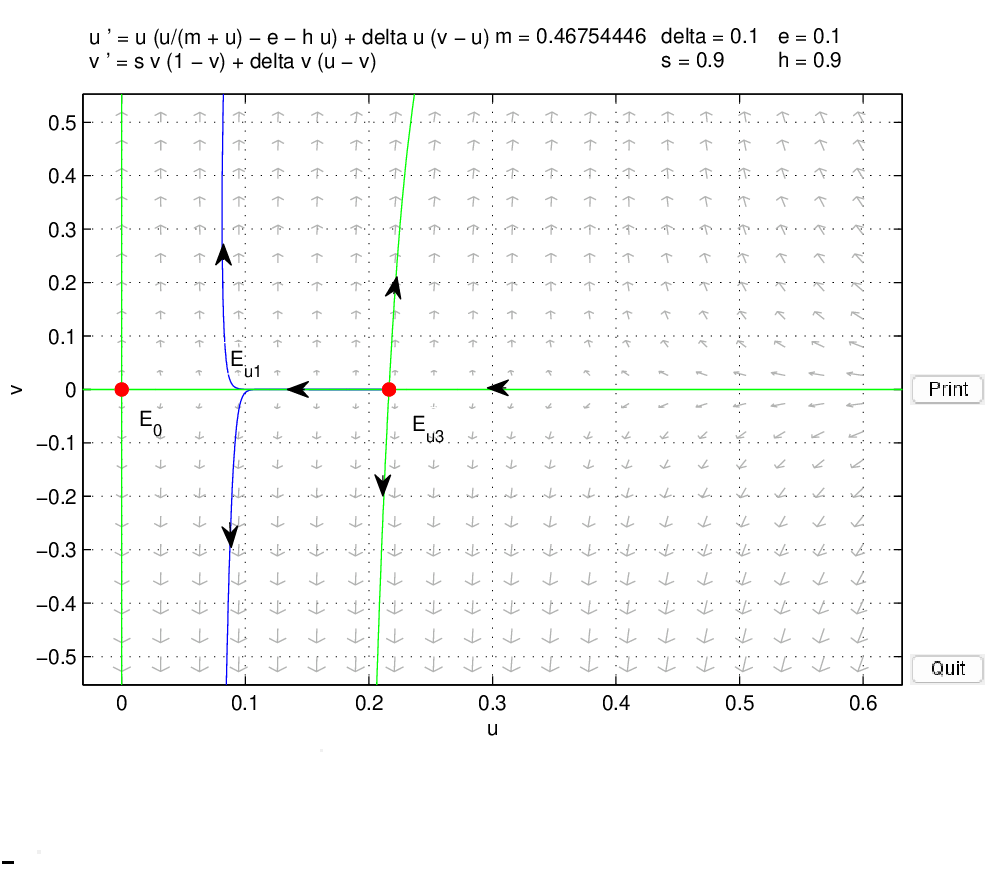}}	
	\subfigure[$e>B$, $m\in \left( m_0,\ m^* \right)$]{
		\includegraphics[width=2in, height=2in]{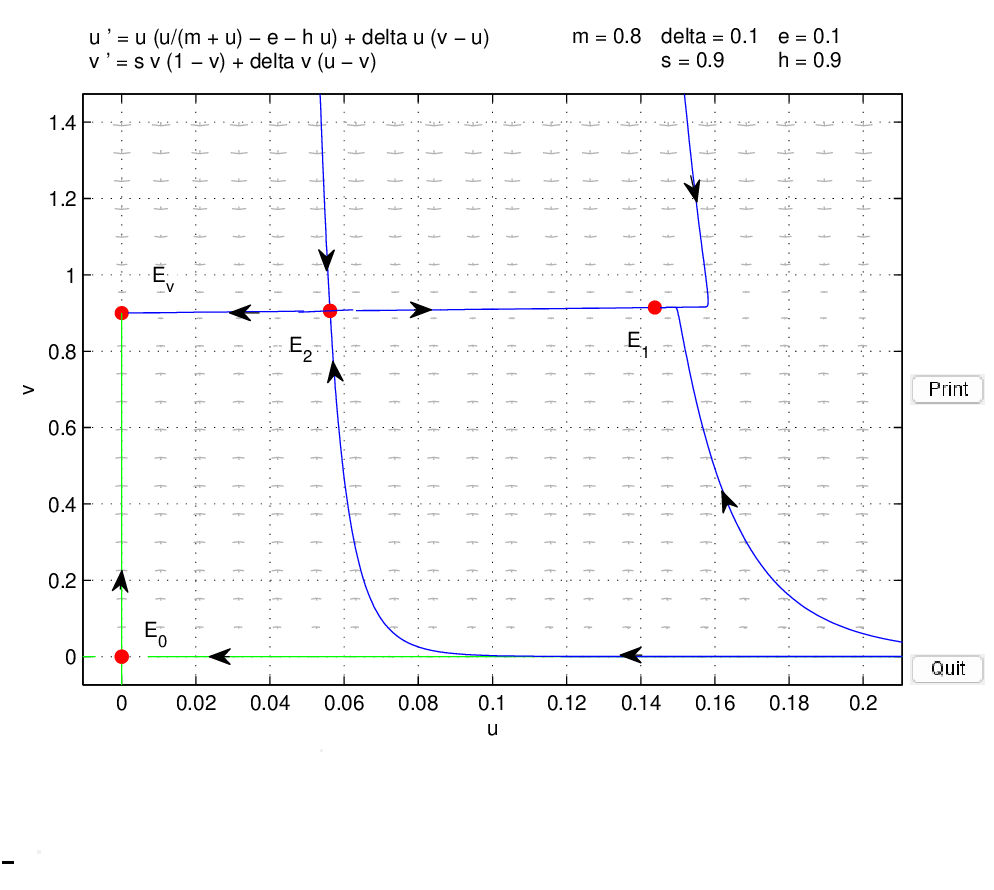}}	
	\subfigure[$e>B$, $m=m^*$]{
		\includegraphics[width=2in, height=2in]{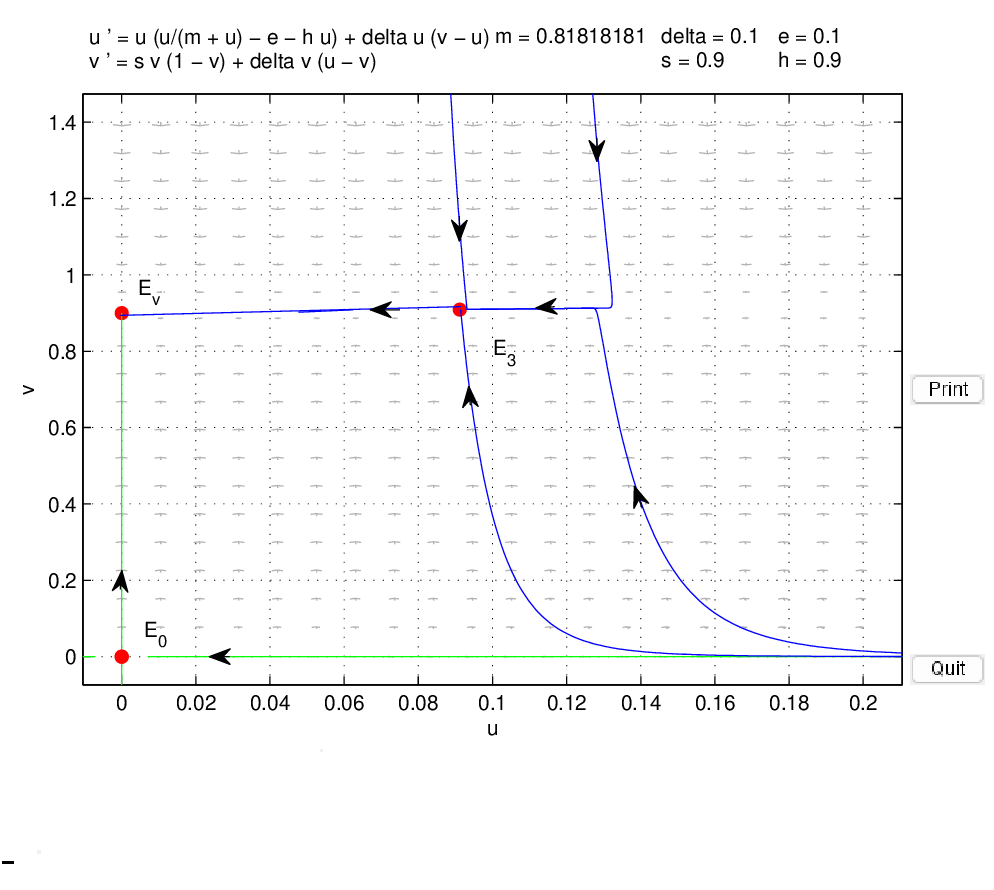}}	
	\subfigure[$e>B$, $m>m^*$]{
		\includegraphics[width=2in, height=2in]{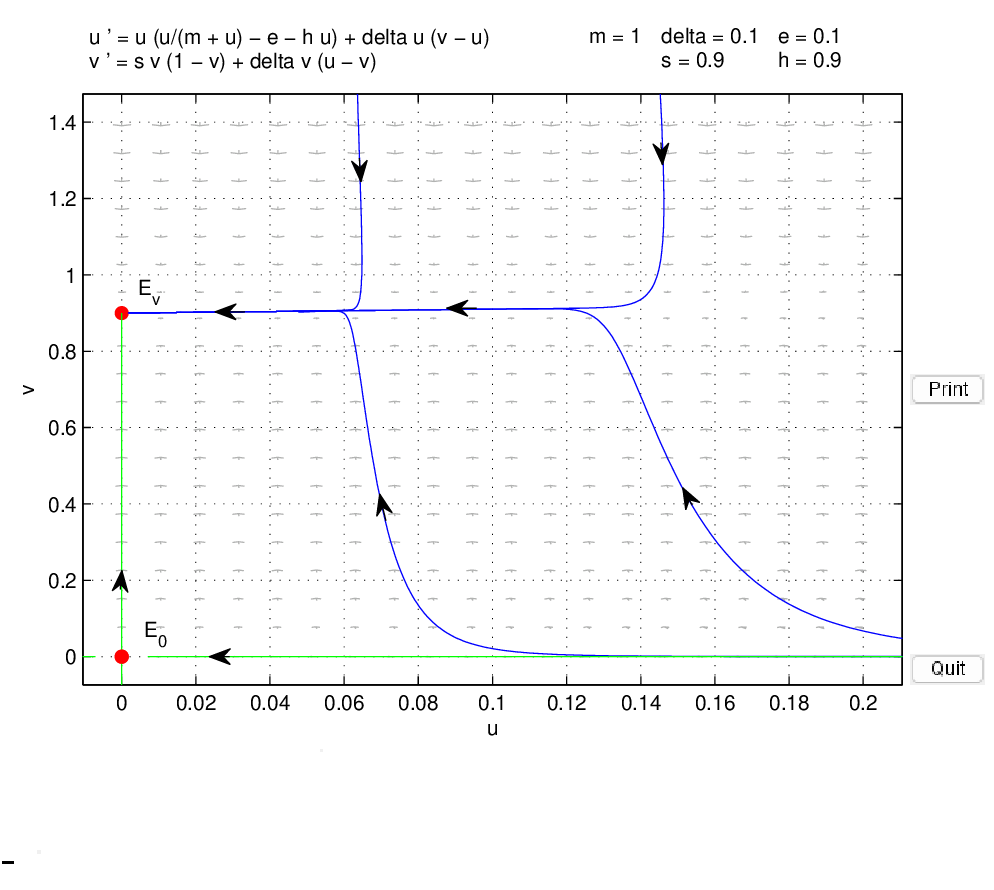}}	
	\caption{\centering {The phase portraits of model (\ref{1.2})}}
\end{figure}

\subsection{Saddle-node bifurcation}
From Theorem 2.2, model (\ref{1.2}) has two positive equilibrium $E_1\left( u_1,\ v_1 \right)$ and $E_2\left( u_2,\ v_2 \right)$ when $B<e<1$, $m<m^*$; However, if $m=m^*$, it has unique positive equilibrium $E_3\left( u_3,\ v_3 \right)$. Saddle-node bifurcation may be raised at here.

\begin{theorem} Saddle-node bifurcation arises at $E_3\left( u_3,\ v_3 \right)$ when $B<e<1$, $m=m^*$.
\end{theorem}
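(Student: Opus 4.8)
The approach is to apply Sotomayor's theorem at $(E_3,m^*)$, taking the Allee constant $m$ as the bifurcation parameter. Write model (\ref{1.2}) as $X'=F(X,m)$ with $X=(u,v)^T$ and $F=(F_1,F_2)^T$ as in (\ref{2.8}). Under the hypotheses $B<e<1$, $m=m^*$, Theorem 2.2 guarantees $E_3=(u_3,v_3)$ is a genuine positive equilibrium, and at a positive equilibrium the relations $s+\delta u=(s+\delta)v$ and $\tfrac{u}{m+u}=e+(h+\delta)u-\delta v$ reduce the Jacobian entries to $j_{11}=\tfrac{mu}{(m+u)^2}-(h+\delta)u$ and $j_{22}=-(s+\delta)v$, whence $\det J_E=u(s+\delta u)\big[h+B-\tfrac{m}{(m+u)^2}\big]$. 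Since $u_1,u_2$ collide into $u_3$ at $m=m^*$ (i.e. $\varDelta_3(m^*)=0$), a short computation gives $m^*+u_3=\tfrac{1-\sqrt{e-B}}{h+B}$ and hence $h+B=\tfrac{m^*}{(m^*+u_3)^2}$, so $\det J_{E_3}=0$ while $\operatorname{Tr}J_{E_3}=(B-\delta)u_3-(s+\delta)v_3<0$: the zero eigenvalue is simple, in agreement with Theorem 2.5(3). The fold relation also gives the clean form $j_{11}=(B-\delta)u_3$, so I would take $V=(\delta,\,\delta-B)^T$ as the right null vector of $J_{E_3}$ and $W=(\delta v_3,\,(\delta-B)u_3)$ as the left null vector (both nonzero since $\delta>0$ and $B<\delta$).

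Next I would verify the parameter-transversality condition. Since $\partial F_1/\partial m=-u^2/(m+u)^2$ and $\partial F_2/\partial m=0$, we have $F_m(E_3,m^*)=\big(-u_3^2/(m^*+u_3)^2,\,0\big)^T$, so
\[
W^T F_m(E_3,m^*)=-\,\frac{\delta v_3\,u_3^2}{(m^*+u_3)^2}\neq 0 .
\]
This is the easy half; it records that $E_1$ and $E_2$ genuinely merge and annihilate as $m$ increases past $m^*$, matching the sign change of $\varDelta_3(m)$ there.

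The step I expect to be the real obstacle is the nondegeneracy condition $W^T\!\big[D^2F(E_3,m^*)(V,V)\big]\neq 0$. The second partials at $E_3$ are $\partial^2F_1/\partial u^2=\tfrac{2(m^*)^2}{(m^*+u_3)^3}-2(h+\delta)$, $\partial^2F_1/\partial u\partial v=\delta$, $\partial^2F_1/\partial v^2=0$, $\partial^2F_2/\partial u^2=0$, $\partial^2F_2/\partial u\partial v=\delta$, $\partial^2F_2/\partial v^2=-2(s+\delta)$. Plugging in $V=(\delta,\delta-B)$, the difficulty is purely bookkeeping: done carelessly $W^T[D^2F(V,V)]$ is a sprawling rational expression, but using the identities $(s+\delta)(\delta-B)=\delta^2$ (from $B=\tfrac{s\delta}{s+\delta}$), $\tfrac{(m^*)^2}{(m^*+u_3)^3}=(h+B)\tfrac{m^*}{m^*+u_3}$ and $h+B=\tfrac{m^*}{(m^*+u_3)^2}$, the second component of $D^2F(V,V)$ vanishes outright and the first collapses to $-2\delta^2(h+B)u_3/(m^*+u_3)$, giving
\[
W^T\!\big[D^2F(E_3,m^*)(V,V)\big]=-\,\frac{2\delta^3(h+B)\,u_3 v_3}{m^*+u_3}<0 .
\]
With the three Sotomayor conditions in hand, model (\ref{1.2}) undergoes a saddle-node bifurcation at $E_3$ as $m$ crosses $m^*$, proving the theorem. (If one prefers to avoid Sotomayor's theorem, the same conclusion follows by combining the $m$-transversality above with the normal form of type (\ref{2.6}) already built for $E_3$ in the proof of Theorem 2.5(3), whose nonvanishing quadratic coefficient certifies a fold.)
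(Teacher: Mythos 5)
Your proof is correct and follows essentially the same route as the paper: both apply Sotomayor's theorem with $m$ as the bifurcation parameter, and your null vectors $V=(\delta,\ \delta-B)^T$ and $W=(\delta v_3,\ (\delta-B)u_3)$ are just scalar multiples of the paper's $\boldsymbol{\alpha}=(1,\ \tfrac{\delta}{s+\delta})^T$ and $\boldsymbol{\beta}=(1,\ \tfrac{\delta u_3}{s+\delta u_3})^T$, so the transversality and nondegeneracy quantities agree up to positive factors (your $-2\delta^2(h+B)u_3/(m^*+u_3)=-2\delta^2(h+B)\sqrt{e-B}$ matches the paper's $-\sqrt{e-B}(h+B)$ up to the normalization). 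The extra identities you derive ($h+B=m^*/(m^*+u_3)^2$, $u_3/(m^*+u_3)=\sqrt{e-B}$) are exactly the simplifications hidden in the paper's ``after complicated calculation'' step, so no gap.
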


\begin{proof}
From theorem 1.5, we get $Det\left( J_{E_3} \right) =0,\ Tr\left( J_{E_3} \right) =-\left( B+\frac{2\delta ^2}{s+\delta} \right) u_3-s<0$ when $B<e<1$, $m=m^*$. Then $J_{E_3}$ has the unique zero eigenvalue $\gamma _1$. Let
\begin{equation*}
	\begin{aligned}
		\boldsymbol{\alpha :}=\left( \begin{array}{c}
			\alpha _1\\
			\alpha _2\\
		\end{array} \right) =\left( \begin{array}{c}
			1\\
			\frac{\delta}{s+\delta}\\
		\end{array} \right) ,\ \boldsymbol{\beta :}=\left( \begin{array}{c}
			\beta _1\\
			\beta _2\\
		\end{array} \right) =\left( \begin{array}{c}
			1\\
			\frac{\delta u_3}{s+\delta u_3}\\
		\end{array} \right)
	\end{aligned}
\end{equation*}
be the eigenvectors of $J_{E_3}$ and $J_{E_3}^{T}$ corresponding to zero eigenvalue. Next, we have
\begin{equation*}
	\begin{aligned}
		&F_m\left( E_3;\ m^* \right) =\left( \begin{array}{c}
			-\frac{u_{3}^{2}}{\left( m^*+u_3 \right) ^2}\\
			0\\
		\end{array} \right),\\
		&D^2F\left( E_3;\ m^* \right) \left( \boldsymbol{\alpha ,\ \alpha } \right) =\left( \begin{array}{c}
			\frac{\partial ^2F_1}{\partial u^2}\alpha _{1}^{2}+2\frac{\partial ^2F_1}{\partial u\partial v}\alpha _1\alpha _2+\frac{\partial ^2F_1}{\partial v^2}\alpha _{2}^{2}\\
			\frac{\partial ^2F_2}{\partial u^2}\alpha _{1}^{2}+2\frac{\partial ^2F_2}{\partial u\partial v}\alpha _1\alpha _2+\frac{\partial ^2F_2}{\partial v^2}\alpha _{2}^{2}\\
		\end{array} \right) _{\left( E_3;\ m^* \right)}\\
		&\ \ \ \ \ \ \ \ \ \ \ \ \ \ \ \ \ \ \  \ \ \ \ \ \ \ \ \ \ =\left( \begin{array}{c}
			-\sqrt{e-B}\left( h+B \right)\\
			0\\
		\end{array} \right),
	\end{aligned}
\end{equation*}
where $F_1$ and $F_2$ are given in (\ref{2.8}). It is easy to get
\begin{equation*}
	\begin{aligned}
		\boldsymbol{\beta }^TF_m\left( E_3;\,\,m^* \right) &=-\frac{u_{3}^{2}}{\left( m^*+u_3 \right) ^2}\ne 0,\\
		\boldsymbol{\beta }^TD^2F\left( E_3;\,\,m^* \right) \left( \boldsymbol{\alpha ,\,\,\alpha } \right) &=-\sqrt{e-B}\left( h+B \right) \ne 0.
	\end{aligned}
\end{equation*}
Applying Sotomayor theorem \cite{ref28}, system (\ref{1.2}) will arise saddle-node bifurcation at $E_3\left( u_3,\ v_3 \right)$ when $B<e<1$, $m=m^*$, Theorem 2.8 is proved. 
\end{proof}

Besides, saddle-node bifurcation also arises at $E_{\bar{u}_3}\left( \bar{u},\ 0 \right)$ when $m=m_0$, and its proof is analogous to Theorem 2.8, thus we omit it here.

\subsection{Effect of Allee effect and nonlinear dispersal}
From Theorem 2.7, the positive equilibrium point $E_1\left( u_1,\ v_1 \right)$ is globally asymptotically stable when $e<B$ or $e=B, m=\frac{1}{h+B}$. Then, the total population abundance is $T=u_1+v_1$, where $v_1=\frac{s+\delta u_1}{s+\delta}$ and
\begin{equation*}
	\begin{aligned}
		\frac{u_1}{m+u_1}-e-hu_1+B\left( 1-u_1 \right) =0.
	\end{aligned}
\end{equation*}
After a simple derivative calculation, we get
\begin{equation*}
	\begin{aligned}
		\frac{\text{d}u_1}{\text{d}m}&=-\frac{u_1}{C\left( m+u_1 \right) ^2}<0,\\
		\frac{\text{d}v_1}{\text{d}m}&=\frac{\delta}{s+\delta}\frac{\text{d}u_1}{\text{d}m}<0,
	\end{aligned}
\end{equation*}
where $C:=\left( h+B \right) -\frac{m}{\left( m+u_1 \right) ^2}>0$. Thus we have $\frac{\text{d}T}{\text{d}m}=\frac{\text{d}u_1}{\text{d}m}+\frac{\text{d}v_1}{\text{d}m}<0$. The above follows that the stronger the Allee effect, the lower the total population density. Fig. 2 is the bifurcation diagram of parameter $m$, which is done by using MatCont \cite{ref29}.

Considering the subsystem of model (\ref{1.2}) without nonlinear dispersal:
\begin{equation}\label{2.9}
	\begin{aligned}
		\frac{\text{d}u}{\text{d}t}=u\left( \frac{u}{m+u}-e-hu \right).
	\end{aligned}
\end{equation}
For model (\ref{2.9}), it is well known that when the Allee effect is strong, it can lead to population extinction under certain initial values. For example, in Fig. 3, it can be seen that the population of patch 1 becomes extinct in the absence of dispersal. However it becomes persistent with the increase of the dispersal coefficient $\delta$, which reflects the positive effect of dispersal here. In Fig. 4, Allee effect does not lead to extinction when the dispersal coefficient $\delta$ is large enough, which is completely different from the model \label{4.1} in which Allee effect may lead to extinction of the population. Therefore reasonable dispersal is necessary for the conservation of scarce animals.

\begin{figure}[H]
	\centering
    \subfigure[$\left( m,\ u \right)$ plane]{
		\includegraphics[width=2.5in, height=2in]{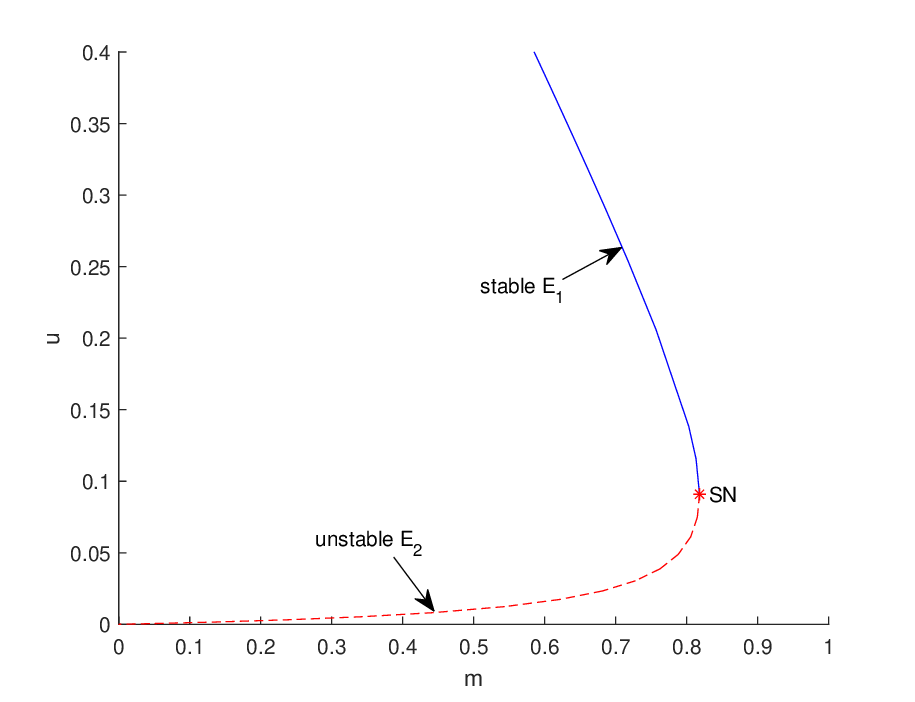}}\hspace{10mm}
	\subfigure[$\left( m,\ v \right)$ plane]{
		\includegraphics[width=2.5in, height=2in]{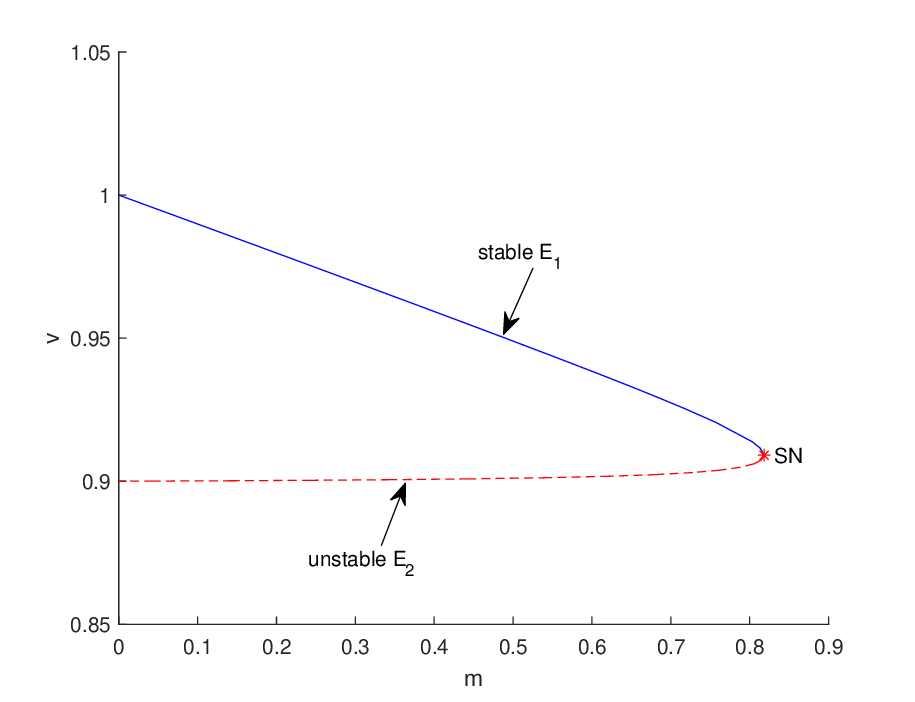}}
	\caption{For model (\ref{1.2}), saddle-node bifurcation diagram of model (\ref{1.2}), where other parameters are $e=\delta=0.1, s=h=0.9$. The blue solid line and the red dashed line indicate the stable and unstable equilibrium points, respectively. SN denotes the saddle-node point.}
\end{figure}

\begin{figure}[H]
	\centering
	\includegraphics[width=0.6\linewidth]{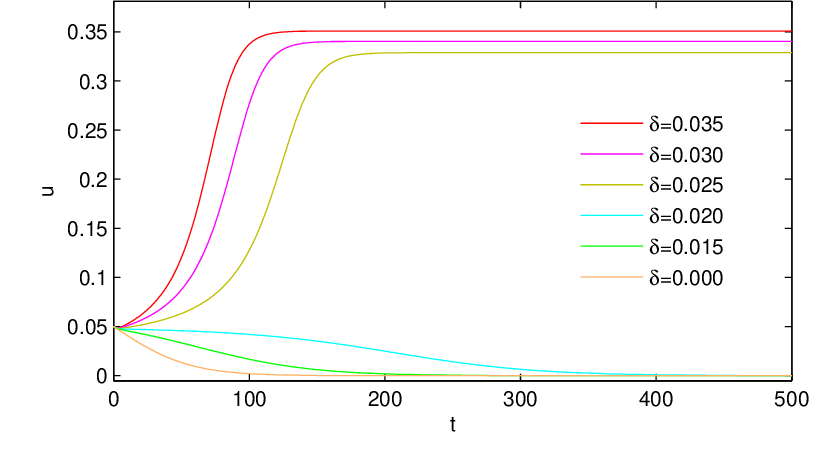}
	\caption{For model (\ref{1.2}), the curves of $u$ over time for different values of $\delta$, where the rest of the parameters are fixed as follows: $m=0.7, e=0.04, h=0.9, s=0.9.$}
\end{figure}

\begin{figure}[H]
	\centering
	\includegraphics[width=0.6\linewidth]{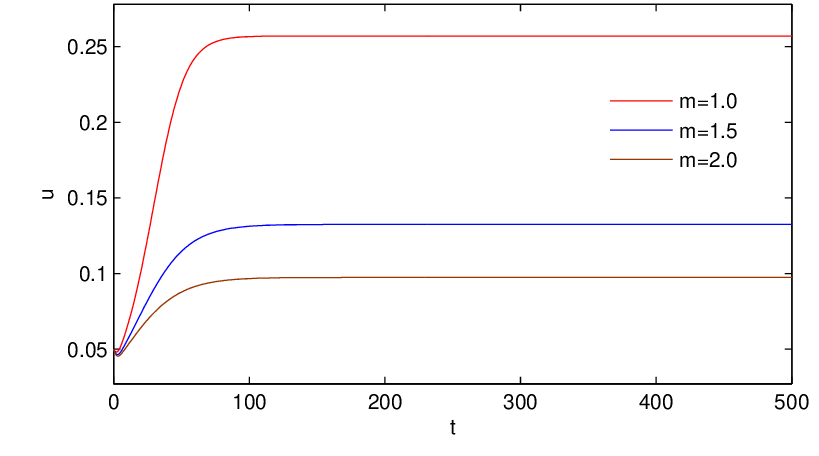}
	\caption{For model (\ref{1.2}), the curves of $u$ over time for different values of $m$ when $B<e<1$. The rest of the parameters are fixed as follows: $\delta=0.1, e=0.04, h=0.9, s=0.9.$}
\end{figure}

In order to better understand the role of nonlinear dispersal, we will present some comparison between nonlinear dispersal and linear dispersal.
Introduce the model with linear dispersal as follows. 
\begin{equation}\label{2.10}
	\begin{aligned}
		\frac{\text{d}u}{\text{d}t}&=u\left( \frac{u}{m+u}-e-hu \right) +\delta \left( v-u \right),\\
		\frac{\text{d}v}{\text{d}t}&=sv\left( 1-v \right) +\delta \left( u-v \right)
	\end{aligned}
\end{equation}
For model (\ref{2.10}), up to now, we can not present the complete qualitative analysis such as the sufficient and necessary condition of the existence of positive equilibrium. We will obtain two sufficient conditions which ensure that system (\ref{2.10})  does not have positive equilibrium and there is an unique positive equilibrium, respectively. And the complete qualitative analysis will be our future work.

~\\
\begin{theorem} For model (\ref{2.10}), if $m\ge \frac{1}{h},\ e>s$ and $\delta>\frac{es}{e-s}$, then there has no positive equilibrium and the trivial equilibrium $O\left( 0,\ 0 \right)$ is globally asymptotically stable.
\end{theorem}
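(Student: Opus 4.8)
The plan is to exhibit a single global Lyapunov function on the closed first quadrant whose derivative is strictly negative away from the origin; global asymptotic stability of $O(0,0)$ then follows, and the absence of a positive equilibrium is an immediate corollary, since at any equilibrium the Lyapunov derivative would vanish. I would first record two trivialities: because $\left.\frac{\text{d}u}{\text{d}t}\right|_{u=0}=\delta v\ge0$ and $\left.\frac{\text{d}v}{\text{d}t}\right|_{v=0}=\delta u\ge0$, the set $\{u\ge0,\ v\ge0\}$ is positively invariant for (\ref{2.10}); and the hypothesis $m\ge\frac1h$ forces $h(m+u)\ge hm\ge1$ for every $u\ge0$, hence $\frac{u}{m+u}\le hu$ and therefore
$$u\left(\frac{u}{m+u}-e-hu\right)\le -eu\qquad(u\ge0),$$
so the nonlinear growth term in the first patch is dominated by the linear decay $-eu$. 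I would also note that under the stated hypotheses $\delta>\frac{es}{e-s}>s$, so $\delta-s>0$.

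Next I would take $V(u,v)=u+\lambda v$ with the weight $\lambda:=\frac{\delta}{\delta-s}>1$; this particular choice is the crux of the argument and is made precisely to kill the linear $v$-term that appears below. Differentiating along solutions of (\ref{2.10}), the pure-dispersal contributions $\pm\delta u$ and $\pm\delta v$ recombine, and invoking the displayed estimate gives
$$\dot V\le\big[-e+\delta(\lambda-1)\big]u+\big[\delta(1-\lambda)+\lambda s\big]v-\lambda s\,v^2 .$$
With $\lambda=\frac{\delta}{\delta-s}$ the coefficient of $v$ is identically $0$, while the coefficient of $u$ becomes $-e+\frac{\delta s}{\delta-s}=\frac{es+\delta s-e\delta}{\delta-s}$, which is negative exactly when $s(e+\delta)<e\delta$, i.e. exactly when $\delta>\frac{es}{e-s}$ (using $e>s$). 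Hence there is $\kappa>0$ with $\dot V\le-\kappa u-\lambda s\,v^2$, so $\dot V<0$ on the closed first quadrant minus the origin, and $\dot V=0$ only at $O$.

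Finally I would conclude: $V$ is positive definite and proper on the invariant quadrant (its sublevel sets there are compact triangles), so every forward orbit is bounded, $O$ is Lyapunov stable, and by LaSalle's invariance principle every orbit converges to the largest invariant subset of $\{\dot V=0\}=\{O\}$, namely $O$ itself; thus $O$ is globally asymptotically stable. A positive equilibrium would be a point $\ne O$ of the closed quadrant at which $\dot u=\dot v=0$, hence $\dot V=0$, contradicting $\dot V<0$ there; so no positive equilibrium exists. The only genuinely nonroutine point is spotting the weight $\lambda=\frac{\delta}{\delta-s}$ and seeing that the pair of conditions $e>s$ and $\delta>\frac{es}{e-s}$ is exactly what makes the resulting $u$-coefficient negative once the $v$-coefficient has been zeroed; the remaining manipulations are bookkeeping.
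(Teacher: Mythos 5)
Your proof is correct, and it takes a genuinely different route from the paper. The paper argues geometrically: it rewrites the equilibrium conditions as $u=H_1(v)$, $v=H_2(u)$, shows both curves are increasing and convex through the origin when $m\ge\frac1h$, and deduces from $H_1'(0)H_2'(0)>1$ (which is where $e>s$ and $\delta>\frac{es}{e-s}$ enter) that the curves cannot meet in the open first quadrant; it then checks $\mathrm{Det}(J_O)>0$, $\mathrm{Tr}(J_O)<0$ for local stability and upgrades to global stability via boundedness plus the absence of positive equilibria and hence of limit cycles — a planar Poincar\'e--Bendixson-type argument. You instead build the single linear Lyapunov function $V=u+\lambda v$ with $\lambda=\frac{\delta}{\delta-s}$, using $m\ge\frac1h$ only to dominate the Allee term by $-eu$, and observing that zeroing the $v$-coefficient leaves a $u$-coefficient that is negative precisely when $\delta>\frac{es}{e-s}$ with $e>s$ — a nice consistency check that your threshold reproduces the theorem's exactly. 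Your computation is right: $\dot V\le -\kappa u-\lambda s v^2$ with $\kappa=e-\frac{\delta s}{\delta-s}>0$, which kills both conclusions at once (nonexistence of a positive equilibrium is immediate, and LaSalle on the compact sublevel triangles gives global asymptotic stability relative to the invariant quadrant). What your approach buys is self-containedness — no appeal to the convexity/non-intersection geometry, to the separate boundedness lemma, or to dimension-two-specific limit-cycle exclusion — and it would survive generalization beyond the plane; what the paper's approach buys is that the same nullcline picture is reused, with the opposite slope inequality, to prove the companion existence result in Theorem 2.10.
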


\begin{proof}
Let the right-hand sides of model (\ref{2.10}) equal to zero to get
\begin{equation}\label{2.11}
	\begin{aligned}
		u&=\frac{1}{\delta}\left[ \left(-s+\delta \right) v+sv^2 \right] :=H_1\left( v \right),\\
		v&=\frac{1}{\delta}u\left(-\frac{u}{m+u}+e+\delta +hu \right) :=H_2\left( u \right).
	\end{aligned}
\end{equation}
It is easy to follows that $H_1\left( v \right)$ is strictly monotonously increasing and concave. And $H_1\left( 0 \right) =0, H_{1}^{'}\left( 0 \right) =\frac{\delta -s}{\delta}>0, \ H_2\left( 0 \right) =0$. Besides, we can obtain
\begin{equation*}
	\begin{aligned}
		H_{2}^{'}\left( u \right) =-\frac{1}{\delta}\left[ \frac{u\left( 2m+u \right)}{\left( m+u \right) ^2}-e-\delta -2hu \right] ,\ H_{2}^{'}\left( 0 \right) =\frac{e+\delta}{\delta}>0.
	\end{aligned}
\end{equation*}
And if $h\geq\frac{1}{m}$, then
\begin{equation*}
	\begin{aligned}
		H_{2}^{''}\left( u \right) =\frac{2}{\delta}\left[ h-\frac{m^2}{\left( m+u \right) ^3} \right] >H_{2}^{''}\left( 0 \right) =\frac{2}{\delta}\left( h-\frac{1}{m} \right) \ge 0.
	\end{aligned}
\end{equation*}
Thus $H_2\left( u \right)$ is also strictly monotonously increasing and concave. The above follows that if $H_{1}^{'}\left( 0 \right) H_{2}^{'}\left( 0 \right) >1$, two curves $u=H_1(v)$ and $v=H_2(u)$ will not intersect each other in the first quadrant which implies that model (\ref{2.10}) has no positive equilibrium. And the diagrams of curves $H_1\left( v \right)$ and $H_2\left( u \right)$ are shown in Figure 6. Notice that when  $e>s$ and $\delta \ge \frac{es}{e-s}$, it follows that $H_{1}^{'}\left( 0 \right) H_{2}^{'}\left( 0 \right) >1$.
Summarizing the above, we can conclude that $m\ge \frac{1}{h},\ e>s, \ \delta>\frac{es}{e-s}$, then model (\ref{2.10}) has no positive equilibrium and then it only has the trivial equilibrium $O\left( 0,\ 0 \right)$.

Next, we will consider the local  asymptotically stability of the trivial equilibrium $O\left( 0,\ 0 \right)$ of model (\ref{2.10}). The Jacobian matrix at $O\left( 0,\ 0 \right)$ is
\begin{equation}\label{2.12}
	\begin{aligned}
		J_O=\left( \begin{matrix}
			-e-\delta&		\delta\\
			\delta&		s-\delta\\
		\end{matrix} \right).
	\end{aligned}
\end{equation}
And
\begin{equation*}
	\begin{aligned}
		Det\left( J_O \right) &=\left( e-s \right) \delta -se>0,\\
		Tr\left( J_O \right) &=-\left( e+s+2\delta \right) <0.
	\end{aligned}
\end{equation*}
Therefore $O\left( 0,\ 0 \right)$ is locally asymptotically stable. Also model (\ref{2.10}) has no limit cycle since it has no positive equilibrium when $m\ge \frac{1}{h},\ e>s$ and $\delta>\frac{es}{e-s}$. The above follows that $O\left( 0,\ 0 \right)$ is globally asymptotically stable. Theorem 2.9 is proved.  
\end{proof}

\begin{figure}[H]
	\centering
    \subfigure[$\left( t,\ u \right)$ plane]{
		\includegraphics[width=2.8in, height=2in]{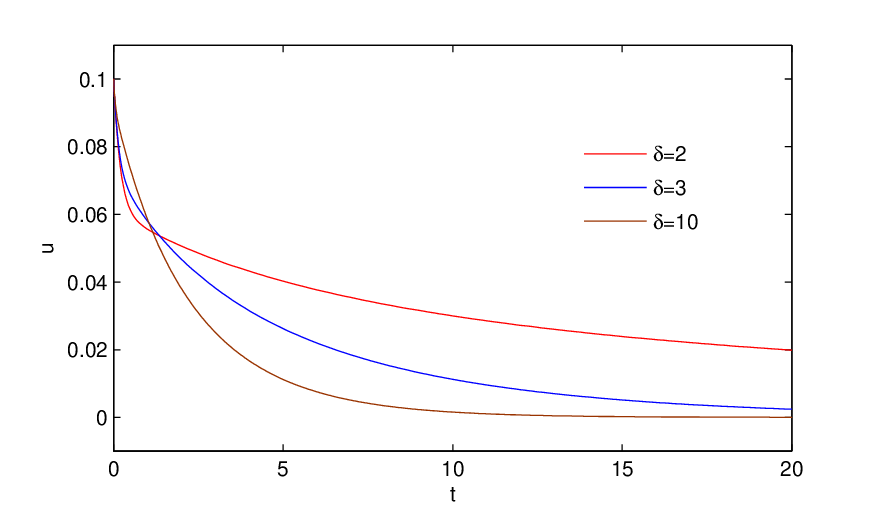}}
	\subfigure[$\left( t,\ v \right)$ plane]{
		\includegraphics[width=2.8in, height=2in]{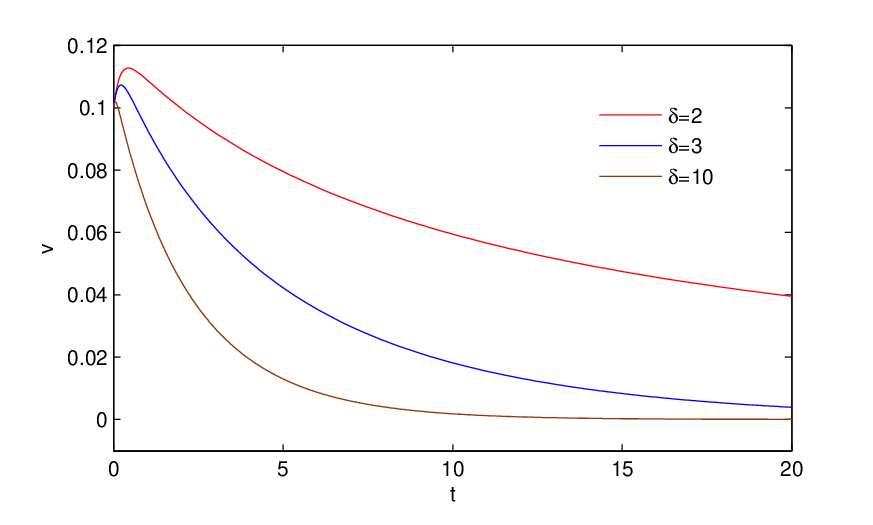}}
	
	\caption{For model (\ref{2.10}), the curves of $u$ and $v$ over time of model (\ref{2.10}) for different values of $\delta$ where the rest of the parameters are fixed as follows: $m=2, e=2, h=1, s=1$.}
\end{figure}

\begin{figure}[H]
	\centering
	\subfigure[]{
		\includegraphics[width=2.5in, height=2in]{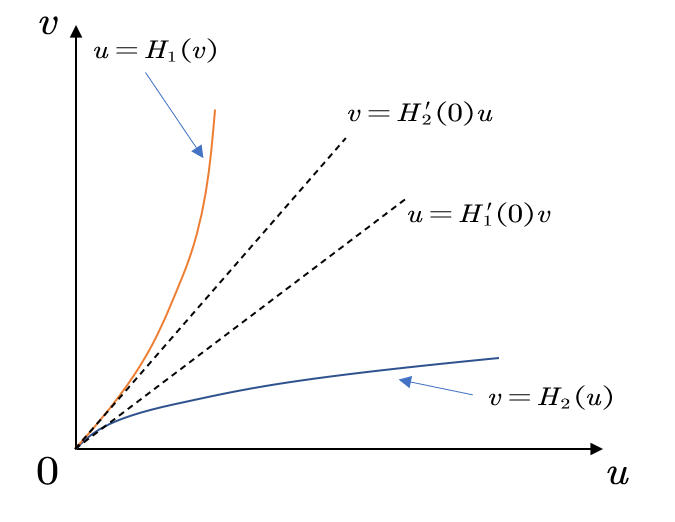}}
    \subfigure[]{
	\includegraphics[width=2.5in, height=2in]{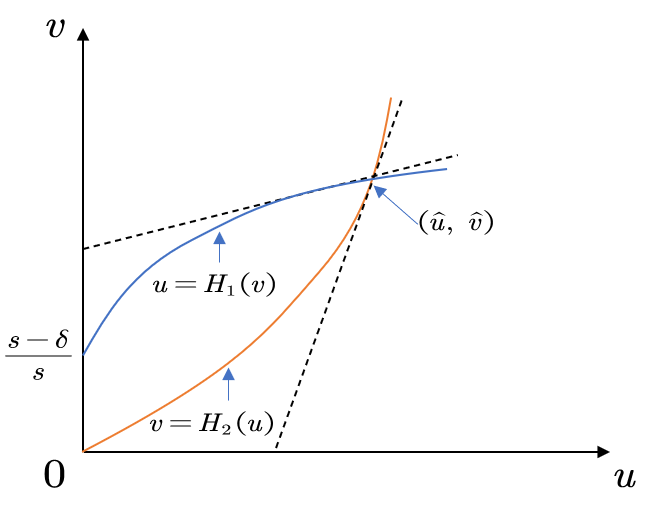}}
		\caption{Number of intersections between $u=H_1\left( v \right)$ and $v=H_2\left( u \right)$ in the first quadrant. (a) no intersection; (b) one intersection.}
\end{figure}

\begin{theorem}
For model (\ref{2.10}), if $m\ge \frac{1}{h}$, $0<\delta \le s$, then it exists a unique positive equilibrium $\hat{E}\left( \hat{u},\ \hat{v} \right)$, where $\hat{u}$ and $\hat{v}$ satisfy the equations (\ref{2.11}). And $\hat{E}\left( \hat{u},\ \hat{v} \right)$ is globally asymptotically stable when $0<\delta <\frac{s-e}{2}$.
\end{theorem}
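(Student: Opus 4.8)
The plan is to handle the two assertions in turn: first the existence and uniqueness of the positive equilibrium $\hat E$, then its global attractivity under the stronger condition $\delta<\frac{s-e}{2}$.

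For \emph{existence and uniqueness} I would reuse the nullcline picture from the proof of Theorem 2.9, but adapt it to the range $0<\delta\le s$, where $u=H_1(v)$ is no longer monotone near $v=0$. Instead I would solve $\dot v=0$ for $v$ as a function of $u$: on the first quadrant this yields the single branch $v=\phi(u):=\frac{(s-\delta)+\sqrt{(s-\delta)^2+4s\delta u}}{2s}$, which is strictly increasing, strictly concave, with $\phi(0)=1-\frac{\delta}{s}\ge 0$. On the other hand the hypothesis $m\ge\frac1h$ gives (exactly as in Theorem 2.9) $H_2''(u)>0$, so $v=H_2(u)$ is strictly increasing, strictly convex, with $H_2(0)=0$ and $H_2(u)>0$ for $u>0$. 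Comparing the two curves: for small $u>0$ one has $\phi(u)>H_2(u)$ (since $\phi(0)\ge0=H_2(0)$, and when $\delta=s$ the curve $\phi$ has a vertical tangent at $0$ while $H_2$ has finite slope), whereas as $u\to\infty$, $H_2(u)\sim\frac{h}{\delta}u^2$ overtakes $\phi(u)\sim\sqrt{\frac{\delta u}{s}}$. Hence $\phi-H_2$ is positive near $0$ and tends to $-\infty$, so it has a zero $\hat u>0$ by the intermediate value theorem; and being concave (concave minus convex), it has exactly one positive zero. This gives the unique positive equilibrium $\hat E=(\hat u,\phi(\hat u))$, whose coordinates solve \eqref{2.11} by construction.

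For \emph{global asymptotic stability} when $\delta<\frac{s-e}{2}$, I would first note that this condition forces $e<s$, hence $\det J_O=(e-s)\delta-es<0$ and the origin $O$ is a saddle; moreover the vector field of \eqref{2.10} points strictly inward on the positive coordinate axes ($\dot u=\delta v>0$ when $u=0,v>0$; $\dot v=\delta u>0$ when $v=0,u>0$), so the open first quadrant is forward invariant. Then I would assemble four ingredients: (i) local stability of $\hat E$ — substituting the equilibrium relations into $J_{\hat E}$ yields $j_{11}=-\frac{\delta\hat v}{\hat u}-\delta-\hat u\big[h-\frac{m}{(m+\hat u)^2}\big]<0$ and $j_{22}=-\frac{\delta\hat u}{\hat v}-s\hat v<0$ (the first using $m\ge\frac1h$), so $\mathrm{Tr}\,J_{\hat E}<0$, while expanding $\det J_{\hat E}=j_{11}j_{22}-\delta^2$ gives a sum of manifestly positive terms, hence $\det J_{\hat E}>0$; (ii) ultimate boundedness of all solutions, by the same comparison argument with a cooperative logistic system used in the proof of Theorem 2.6; (iii) absence of periodic orbits and separatrix cycles in the open first quadrant, via the Dulac function $g(u,v)=\frac{1}{uv}$, for which $m\ge\frac1h$ gives $\frac{\partial(gF_1)}{\partial u}+\frac{\partial(gF_2)}{\partial v}=\frac1v\big(\frac{m}{(m+u)^2}-h\big)-\frac{\delta}{u^2}-\frac{s}{u}-\frac{\delta}{v^2}<0$ throughout the open first quadrant; (iv) the stable manifold of the saddle $O$ does not meet the open first quadrant (one checks the characteristic polynomial of $J_O$ is negative at $-(e+\delta)$, so the stable eigenvalue satisfies $\lambda_-<-(e+\delta)$ and the corresponding eigenvector has components of opposite sign). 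By the Poincar\'e--Bendixson theorem, (ii) and (iii) force every interior $\omega$-limit set to be an equilibrium, i.e. $O$ or $\hat E$; (iv) rules out $O$; hence $\hat E$ is globally asymptotically stable.

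The routine parts are the boundedness estimate and the sign computations for $J_{\hat E}$ and for the Dulac divergence. The main obstacle is the \emph{uniqueness} of $\hat E$: unlike Theorem 2.9, for $0<\delta\le s$ the $\dot v$-nullcline is not globally monotone, so one must isolate the correct branch and pin down the concave/convex comparison, including the borderline case $\delta=s$. A secondary delicate point is the boundary analysis in the global step — ruling out an interior trajectory limiting onto the saddle $O$, or onto a homoclinic loop at $O$ — and this is precisely where $e<s$, guaranteed by $\delta<\frac{s-e}{2}$, enters.
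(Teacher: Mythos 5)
Your proposal is correct and follows the same overall skeleton as the paper (nullcline intersection for existence and uniqueness, Jacobian signs for local stability, Dulac plus Poincar\'e--Bendixson for global stability), but it differs in three substantive details. First, for uniqueness the paper keeps the $\dot v$-nullcline in the form $u=H_1(v)$, restricts attention to the branch emanating from $\bigl(0,\tfrac{s-\delta}{s}\bigr)$ where $H_1$ is increasing and satisfies $H_1'\bigl(\tfrac{s-\delta}{s}\bigr)=\tfrac{s-\delta}{\delta}\ge 0$, and argues one intersection with the increasing convex curve $v=H_2(u)$; your inversion $v=\phi(u)$ with the concave-minus-convex sign argument is the same geometric content made cleaner, and it correctly handles the non-monotonicity of $H_1$ near $v=0$ (and the borderline $\delta=s$) that the paper glosses over. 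Second, the paper uses the Dulac function $g=1/(u^2v^2)$, whose divergence is $\tfrac{e+2\delta-s}{u^2v^2}-\bar M$, and this is exactly where the hypothesis $\delta<\tfrac{s-e}{2}$ is consumed; your choice $g=1/(uv)$ gives a divergence that is negative using only $m\ge\tfrac1h$, so your argument in fact establishes global stability on the whole existence range $0<\delta\le s$ --- a genuinely stronger conclusion. Third, you explicitly exclude the origin from interior $\omega$-limit sets by locating the stable eigenvector of the saddle $O$ (via $p(-(e+\delta))=-\delta^2<0$); the paper omits this step and simply invokes Bendixson--Dulac ``as in Theorem 2.7,'' so your treatment closes a small gap in the published argument. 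One harmless slip: your expression for $j_{11}$ at $\hat E$ carries a spurious $-\delta$ term (the correct reduction is $j_{11}=\hat u\bigl[\tfrac{m}{(m+\hat u)^2}-h\bigr]-\tfrac{\delta\hat v}{\hat u}$); the sign conclusions for the trace and determinant are unaffected.
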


\begin{proof}
From the proof of Theorem 2.9, if $m\geq \frac{1}{h}$, we can obtain that both $H_1\left( v \right) $ and $H_2\left( u \right)$ are strictly monotone increasing and concave for $u>0$ and $v>0$. From $H_1\left( \frac{s-\delta}{s} \right) =H_2\left( 0 \right) =0$ and $H_{1}^{'}\left( \frac{s-\delta}{s} \right) =\frac{s-\delta}{\delta}\ge 0,\ H_{2}^{'}\left( 0 \right) >0$, then two curves $u=H_1(v)$ and $v=H_2(u)$ will intersect each other once in the first quadrant which implies that model (\ref{1.2}) has a unique positive equilibrium. And it is shown in Figure 6 (b). And a simple calculation gives us $H_{1}^{'}\left( \hat{v} \right) H_{2}^{'}\left( \hat{u} \right) >1$.

From (\ref{2.12}), then we get $Det\left( J_O \right) =e\left( \delta -s \right) -\delta s<0$. Thus $O\left( 0,\ 0 \right)$ is a saddle. Next, we will consider the stability of the positive equilibrium $\hat{E}\left( \hat{u},\ \hat{v} \right)$. From (\ref{2.11}), model (\ref{2.10}) can rewritten as
\begin{equation*}
	\begin{aligned}
		\frac{\text{d}u}{\text{d}t}=\delta \left[ v-H_2\left( u \right) \right]:=Q_1 ,\\
		\frac{\text{d}v}{\text{d}t}=\delta \left[ u-H_1\left( v \right) \right]:=Q_2 .
	\end{aligned}
\end{equation*}
The Jacobian matrix at $\hat{E}\left( \hat{u},\ \hat{v} \right)$ is
\begin{equation*}
	\begin{aligned}
		J_{\hat{E}}=\left( \begin{matrix}
			-\delta H_{2}^{'}\left( u \right)&		\delta\\
			\delta&		-\delta H_{1}^{'}\left( v \right)\\
		\end{matrix} \right).
	\end{aligned}
\end{equation*}
And
\begin{equation*}
	\begin{aligned}
		Det\left( J_{\hat{E}} \right) =\delta ^2\left[ H_{1}^{'}\left( \hat{v} \right) H_{2}^{'}\left( \hat{u} \right) -1 \right] >0,\\
		Tr\left( J_{\hat{E}} \right) =-\delta \left[ H_{1}^{'}\left( \hat{v} \right) +H_{2}^{'}\left( \hat{u} \right) \right] <0.
	\end{aligned}
\end{equation*}
Therefore $\hat{E}\left( \hat{u},\ \hat{v} \right)$ is locally asymptotically stable. Considering again the Dulac function $g\left( u,\ v \right) =\frac{1}{u^2v^2}$. Applying $\delta \le \frac{s-e}{2}$, we get
\begin{equation*}
	\begin{aligned}
		\frac{\partial \left( gQ_1 \right)}{\partial u}+\frac{\partial \left( gQ_2 \right)}{\partial v}=\frac{e+2\delta -s}{u^2v^2}-\bar{M}<0,
	\end{aligned}
\end{equation*}
where $\bar{M}=\frac{1}{\left( m+u \right) ^2v^2}+\delta \left( \frac{1}{u^3v}+\frac{1}{uv^3} \right)>0$. Using the Bendixson-Dulac discriminant in the same way as Theorem 2.7, we can see that $\hat{E}\left( \hat{u},\ \hat{v} \right)$ is globally asymptotically stable. Theorem 2.10 is proved.  
\end{proof}

\begin{remark}
In our manuscript, we focus on how dispersal can keep the species with Allee effect from being extinct.  In detail, From Theorem 2.7 and Remark 2.1, we conclude that for the model (1.2) with a nonlinear dispersal mechanism, large amplitude of nonlinear dispersal can prevent the species with strong Allee effect from going extinct. However, Theorem 2.9 states that if the diffusion between two patches is linear, the species in both patches may still go extinct when Allee constant is large, even if the dispersal intensity is large. Theorem 2.10 states that if the diffusion between two patches is linear, the species in both patches can persist when Allee constant is large and the dispersal intensity is less. Through the comparison between Theorem 2,7, Remark 2.1 and Theorem 2.9, 2.10, it is not difficult to obtain that linear and nonlinear dispersal has different impact on the species' permanence. In all, large amplitude of nonlinear dispersal or less intensity of linear dispersal can keep the species with strong Allee effect from being extinct.  The above comparison of nonlinear diffusion with linear diffusion has theoretical and practical significance.
\end{remark}

\section{The PDE Case}

In this section we will study the effect of dispersal rate $\delta$  for spatially explicit PDE version of system (\ref{1.2}) and (\ref{2.10}).
\subsection{Notations and preliminary observations}

\begin{lemma}\label{lem:class1}
	Consider a $m\times m$ system of reaction-diffusion equations , where each equation is defined as follows: for all $i=1,...,m,$ 
	\begin{equation}
		\label{eq:class1}
		\partial_t u_i - d_i\Delta u_i = f_i(u_1,...,u_m)~\text{in}~ \mathbb{R}_+\times \Omega,~ \partial_\nu u_i = 0~ \text{on}~ \partial \Omega,~ u_i(0) = u_{i0},
	\end{equation}
	where $d_i \in (0,+\infty)$, $f=(f_1,...,f_m):\mathbb{R}^m \rightarrow \mathbb{R}^m$ is continuously differentiable on $\Omega$, and $u_{i0}\in L^{\infty}(\Omega)$. Then, there exists a time interval $T>0$ within which a unique classical solution to (\ref{eq:class1}) exists, i.e., the solution is well-defined and smooth on $[0,T)$. Let $T^*$ be the maximum value of all such intervals $T$. It follows that 
	\begin{equation*}
		\Bigg[\sup_{t \in [0,T^*), 1\leq i\leq m} ||u_i(t)||_{L^{\infty}(\Omega)} < +\infty \Bigg] \implies [T^*=+\infty].
	\end{equation*}
	If the non-linearity $(f_i)_{1\leq i\leq m}$ is additionally quasi-positive, which means 
	$$\forall i=1,..., m,~~\forall u_1,..., u_m \geq 0,~~f_i(u_1,...,u_{i-1}, 0, u_{i+1}, ..., u_m)\geq 0,$$
	then $$[\forall i=1,..., m, u_{i0}\geq 0] \implies [\forall i=1,...,m,~ \forall t\in [0,T^*), u_i(t)\geq 0].$$
\end{lemma}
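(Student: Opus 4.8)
The plan is to read (\ref{eq:class1}) as a semilinear parabolic Cauchy problem on the Banach space $X=\big(C(\overline{\Omega})\big)^m$ (or $\big(L^p(\Omega)\big)^m$ with $p$ large), for which local well-posedness, the blow-up alternative, and the invariance of the positive cone are classical, and to organise the argument in three steps. \textbf{Step 1 (local classical solution).} For each $i$ the Neumann realisation $d_i\Delta$ generates an analytic, strongly continuous, positivity-preserving semigroup $(e^{t d_i\Delta})_{t\ge 0}$, with the standard $L^p$--$L^q$ and parabolic smoothing estimates. Since $f\in C^1$, the Nemytskii map $u\mapsto(f_i(u))_i$ is locally Lipschitz on $X$. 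One then solves the fixed-point equation
\begin{equation*}
	u_i(t) = e^{t d_i\Delta}u_{i0} + \int_0^t e^{(t-s) d_i\Delta} f_i\big(u_1(s),\dots,u_m(s)\big)\, ds
\end{equation*}
by the contraction mapping principle on $C([0,\tau];X)$ for $\tau$ small, obtaining a unique mild solution; the quantitative point I would record for later use is that $\tau$ may be chosen to depend only on $\|u_0\|_X$, through the Lipschitz constant of $f$ on a ball around $u_0$. Parabolic regularity — interior $L^p$ estimates, Sobolev embedding, then Schauder estimates applied to each now-linear scalar equation with Hölder-continuous right-hand side — upgrades the mild solution to a classical one, smooth on $(0,\tau)\times\Omega$; the union of all such intervals defines the maximal existence time $T^*$.

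\textbf{Step 2 (blow-up alternative).} Suppose $T^*<+\infty$ but $M:=\sup_{t\in[0,T^*),\,i}\|u_i(t)\|_{L^\infty(\Omega)}<+\infty$. By Step 1, the local existence time starting from any data of $X$-norm $\le M$ is bounded below by some $\tau_0=\tau_0(M)>0$. Choosing $t_0\in(T^*-\tau_0,T^*)$ and restarting the problem at $t_0$ with initial data $u(t_0)$ produces a solution on $[t_0,t_0+\tau_0)$ with $t_0+\tau_0>T^*$; uniqueness lets us glue it to the original solution, contradicting the maximality of $T^*$. Hence uniform boundedness of the sup-norms forces $T^*=+\infty$.

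\textbf{Step 3 (positivity).} Fix $T'<T^*$ and let $R=\sup_{[0,T']}\max_i\|u_i\|_{L^\infty}$. For each $i$, the mean value theorem in the $i$-th slot gives $f_i(u)=f_i(\hat u^{(i)})+u_i\,g_i(u)$, where $\hat u^{(i)}$ is $u$ with its $i$-th component replaced by $0$ and $g_i$ is bounded, say $|g_i|\le C$, on $\{\,|u|\le R\,\}$; quasi-positivity says $f_i(\hat u^{(i)})\ge 0$ whenever the remaining components are $\ge 0$. Fixing $K>C$, the modified nonlinearity $F_i(u):=f_i(u)+Ku_i$ is nonnegative on the nonnegative ball $\{\,0\le u,\ |u|\le R\,\}$, while $d_i\Delta-K$ still generates a positivity-preserving semigroup. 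Rewriting the fixed-point equation as
\begin{equation*}
	u_i(t) = e^{t(d_i\Delta-K)}u_{i0} + \int_0^t e^{(t-s)(d_i\Delta-K)} F_i\big(u(s)\big)\, ds
\end{equation*}
and running the Picard iteration from the nonnegative iterate $u^{(0)}=0$, each iterate remains in the closed convex set of nonnegative functions inside the ball on which the contraction lives, so nonnegativity is preserved along the iteration and passes to the limit, which is the unique solution. Letting $T'\uparrow T^*$ yields $u_i\ge 0$ on $[0,T^*)$ for every $i$.

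\textbf{Main obstacle.} The genuinely delicate part is Step 1: making the local theory clean enough that the existence time depends only on $\|u_0\|_X$ and that mild solutions are in fact classical. The $C(\overline{\Omega})$ / $L^\infty$ setting is awkward because $\Delta$ is not nicely densely defined there, so in practice I would run the fixed point in $L^p$ with $p>\dim\Omega$ (or in the fractional-power spaces of Henry/Amann) and bootstrap regularity afterwards. Once that framework is in place, Steps 2 and 3 are essentially routine, the only subtlety in Step 3 being the $+Ku_i$ shift, which decouples the sign question equation-by-equation while keeping each semigroup positivity-preserving despite the coupling in $f$.
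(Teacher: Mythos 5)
The paper does not actually prove this lemma: it is stated as a preliminary, classical fact (of the type found in Henry's book, cited as \cite{ref30}, and in the standard surveys on reaction--diffusion systems), so there is no in-paper argument to compare yours against. Your proposal is, however, a correct rendering of the standard proof that the authors are implicitly invoking: abstract semilinear theory for the Neumann realisation of $d_i\Delta$ plus a locally Lipschitz Nemytskii map gives a local mild solution with existence time controlled by $\|u_0\|_{L^\infty}$, bootstrapping gives regularity, the restart-and-glue argument gives the blow-up alternative, and the shift $F_i(u)=f_i(u)+Ku_i$ with $K$ dominating the Lipschitz constant of $f_i$ in its $i$-th slot reduces positivity to the positivity-preserving property of $e^{t(d_i\Delta-K)}$ together with quasi-positivity. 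Two small points you should tighten if this were written out in full: in Step 3 the Picard iteration must be run on a set that is simultaneously invariant, bounded by $R$, and contained in the nonnegative cone, which in practice requires truncating $f$ outside the ball $\{|u|\le R\}$ (harmless, since the solution you compare against never leaves that ball on $[0,T']$ by uniqueness); and your own caveat about the $L^\infty$/$C(\overline{\Omega})$ setting is the right one --- the clean route is Henry's fractional-power-space framework or an $L^p$ fixed point with $p>n$ followed by Schauder bootstrapping, exactly as you indicate. With those caveats the argument is complete and matches what the paper takes for granted.
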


\begin{lemma}\label{lem:class2}
	Under the same notations and assumptions as in Lemma \ref{lem:class1}, let's consider an additional condition. Suppose that $f$ exhibits at most polynomial growth and there exists $\mathbf{b}\in \mathbb{R}^m$ and a lower triangular invertible matrix $P$ with nonnegative entries, such that for any $r \in [0,+\infty)^m,$ we have $$Pf(r)\leq \Bigg[1+ \sum_{i=1}^{m} r_i \Bigg]\mathbf{b}.$$ Then, for any initial value $u_0 \in L^{\infty}(\Omega, \mathbb{R}_+^m),$ the system (\ref{eq:class1}) admits a strong global solution.
\end{lemma}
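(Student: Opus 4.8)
The plan is to establish this as the standard global existence theorem for reaction--diffusion systems with a triangular ``mass--type'' control on the nonlinearity: I would upgrade the local classical solution provided by Lemma~\ref{lem:class1} to a global one by proving a priori $L^\infty$ bounds on every finite time interval $[0,T]$, $T<T^*$, so that the blow--up alternative in Lemma~\ref{lem:class1} forces $T^*=+\infty$. By Lemma~\ref{lem:class1} there is a unique classical solution on $[0,T^*)$, and since $u_0\ge 0$ and $f$ is quasi--positive (tacitly required, as the one--sided control is only assumed on $[0,+\infty)^m$), the solution stays nonnegative, $u_i(t)\ge 0$; hence it suffices to bound $\sup_{[0,T]\times\Omega}u_i$ from above for each $i$.

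\textbf{Step 1 (an $L^1$ bound).} First I would integrate the $i$-th equation over $\Omega$; the homogeneous Neumann condition annihilates the diffusion term, leaving $\frac{d}{dt}\int_\Omega u_i\,dx=\int_\Omega f_i(u)\,dx$. Applying the row vector $\mathbf{1}^\top P$ and setting $c_j:=\sum_i P_{ij}$ (which is $\ge P_{jj}>0$ because $P$ is lower triangular, invertible, and has nonnegative entries), the quantity $Y(t):=\sum_j c_j\|u_j(t)\|_{L^1(\Omega)}$ satisfies, by the hypothesis $Pf(r)\le(1+\sum_i r_i)\mathbf{b}$, an estimate $\frac{d}{dt}Y\le(\mathbf{1}^\top\mathbf{b})\big(|\Omega|+\sum_j\|u_j\|_{L^1}\big)\le\alpha+\beta Y$ for constants $\alpha,\beta$. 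Gronwall's inequality then bounds $Y$, hence each $\|u_i(t)\|_{L^1(\Omega)}$, uniformly on $[0,T]$, with a bound independent of $T^*$.

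\textbf{Step 2 (bootstrap to $L^p(Q_T)$ for all finite $p$, with $Q_T:=(0,T)\times\Omega$).} This is the crux and the step I expect to be the main obstacle. Here I would run Morgan's iterated $L^p$-energy bootstrap (equivalently, Pierre's duality method), which uses the strict positivity of the diffusion coefficients together with the triangular structure. Schematically: one first converts the space-$L^1$ bound into an $L^2(Q_T)$ bound on $z:=\sum_i u_i$ — testing the equation for $z$ against a dual backward heat problem, after writing $\sum_i d_i u_i=a(x,t)z$ with $\min_i d_i\le a\le\max_i d_i$ — and then iterates along the triangular index $k=1,\dots,m$: at step $k$ the inequality $P_{kk}f_k(u)\le b_k(1+\sum_i u_i)-\sum_{j<k}P_{kj}f_j(u)$, the polynomial growth of $f$, and the control already obtained on $u_1,\dots,u_{k-1}$ feed a parabolic maximal--regularity/energy estimate for the $k$-th equation (with $u_k\ge 0$ supplying the lower bound) and raise the integrability of $u_k$. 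Finitely many sweeps through $k=1,\dots,m$, each increasing the exponent, then place every $u_i$ in $L^p(Q_T)$ for all finite $p$. Because one has only the one--sided control $Pf\le(1+\sum_i r_i)\mathbf{b}$, and not genuine mass dissipation $\sum_i f_i\le 0$, keeping these iterations quantitative is the delicate technical heart of the argument.

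\textbf{Step 3 (from $L^p$ to $L^\infty$ and conclusion).} Finally, once $u_i\in L^p(Q_T)$ for all finite $p$, the polynomial growth of $f$ gives $f_i(u)\in L^p(Q_T)$ for all finite $p$; parabolic $L^p$ maximal regularity applied to $\partial_t u_i-d_i\Delta u_i=f_i(u)$ with Neumann data puts $u_i$ in the anisotropic Sobolev space $W^{2,1}_p(Q_T)$, and taking $p$ large the embedding into $C([0,T]\times\overline\Omega)$ yields a finite $\sup_{[0,T]\times\Omega}|u_i|$. Since $T<T^*$ was arbitrary and the bounds do not depend on $T^*$, this contradicts the blow--up criterion of Lemma~\ref{lem:class1} unless $T^*=+\infty$; hence the solution is global, i.e.\ for every $u_0\in L^\infty(\Omega,\mathbb{R}_+^m)$ the system~(\ref{eq:class1}) admits a strong global solution, which is the assertion.
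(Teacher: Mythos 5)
The paper does not prove this lemma at all: it is quoted as a known black--box result from the theory of reaction--diffusion systems with control of mass and triangular structure (the theorem of Morgan, surveyed by Pierre), and is then used only through its corollary for the specific two--component systems \eqref{PDE_linear} and \eqref{PDE_nonlinear}. Your proposal reconstructs the standard literature proof of that black box, and the outline is the right one: nonnegativity from quasi-positivity, an $L^1$-in-space bound from the row sums of $P$ (your observation that $c_j=\sum_i P_{ij}\geq P_{jj}>0$ because $P$ is lower triangular, invertible and nonnegative is exactly the point that makes $Y$ control every $\|u_j\|_{L^1}$), then the duality/bootstrap to $L^p(Q_T)$, then parabolic regularity to $L^\infty$ and the blow-up alternative. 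Two small remarks. First, in Step 2 the duality estimate should be run on the weighted sum $z=\sum_j c_j d_j u_j$ (or $\sum_j c_j u_j$) rather than on $\sum_j u_j$, since the hypothesis only yields the one-sided control $\sum_j c_j f_j\leq C\bigl(1+\sum_i r_i\bigr)$ for the weighted combination; as all $c_j$ are positive and the components nonnegative, this is equivalent up to constants, but it is worth saying. Second, your Step 2 is a sketch of the genuinely hard part --- the iteration along the triangular index needs the polynomial growth quantitatively, because at stage $k$ the term $-\sum_{j<k}P_{kj}f_j(u)$ is only controlled through $|f_j(u)|\leq C(1+\sum_i u_i)^N$, so each sweep must raise the integrability of the whole vector, not just of $u_k$; you flag this honestly, but a self-contained proof would have to carry out Morgan's $L^p$-energy (or Pierre's dual) iteration in detail. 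As a justification of the lemma as the paper uses it, your argument is correct in approach and more informative than the paper, which simply cites the result.
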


Based on the given assumptions, it is widely recognized that the following local existence result, originally presented by D. Henry in \cite{ref30}, holds true:

\begin{theorem}
	\label{thm:class3}
	The system (\ref{eq:class1}) possesses a unique and classical solution $(u,v)$ defined over the interval $[0,T_{\max}]\times \Omega$. If $T_{\max} < \infty$, then \begin{equation}
		\underset{t\nearrow T_{\max }}{\lim }\Big\{ \left\Vert u(t,.)\right\Vert
		_{\infty }+\left\Vert v(t,.)\right\Vert _{\infty } \Big\} =\infty ,  
	\end{equation}%
	where $T_{\max }$ denotes the eventual blow-up time in $\mathbb{L}^{\infty }(\Omega ).$
\end{theorem}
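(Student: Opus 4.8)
The plan is to recast the system as an abstract semilinear parabolic problem and reproduce the classical analytic-semigroup argument underlying \cite{ref30}. Work in the Banach space $X=\big(C(\overline{\Omega})\big)^m$ with the supremum norm, let $A=\mathrm{diag}\big(-d_1\Delta,\dots,-d_m\Delta\big)$ be the realisation of the diffusion operator under homogeneous Neumann boundary conditions, and recall that each scalar operator $-d_i\Delta$ generates a bounded analytic $C_0$-semigroup on $C(\overline{\Omega})$; hence $-A$ generates an analytic semigroup $\{e^{-tA}\}_{t\ge 0}$ on $X$ with the bounds $\|e^{-tA}\|_{\mathcal{L}(X)}\le C$ and $\|A^{\beta}e^{-tA}\|_{\mathcal{L}(X)}\le C_{\beta}\,t^{-\beta}$ for $t\in(0,1]$, $\beta\ge 0$ (replacing $A$ by $A+I$ if needed so that fractional powers are defined). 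Since $f\in C^1$, the Nemytskii operator $F(w):=\big(f_1(w),\dots,f_m(w)\big)$ maps $X$ into $X$ and is Lipschitz on bounded sets, $\|F(w_1)-F(w_2)\|_X\le L(R)\,\|w_1-w_2\|_X$ whenever $\|w_1\|_X,\|w_2\|_X\le R$. The problem is then equivalent to the Cauchy problem $u'+Au=F(u)$, $u(0)=u_0$, and I will work with its mild (Duhamel) form.

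First I would construct a local mild solution by a contraction argument. Given $u_0$ with $\|u_0\|_X\le R_0$, set $R=R_0+1$ and consider, on the complete metric space $\mathcal{Z}_T=\{\,w\in C([0,T];X):\ \sup_{0\le t\le T}\|w(t)-e^{-tA}u_0\|_X\le 1\,\}$, the map
$$(\Phi w)(t)=e^{-tA}u_0+\int_0^t e^{-(t-s)A}F(w(s))\,ds.$$
Using $\|e^{-tA}\|_{\mathcal{L}(X)}\le C$ and the Lipschitz bound for $F$ on the ball of radius $R$, a routine estimate shows that $\Phi$ maps $\mathcal{Z}_T$ into itself and is a strict contraction once $T=T(R_0)>0$ is small enough; the Banach fixed point theorem then gives a unique $u\in C([0,T];X)$ solving the Duhamel equation, and uniqueness among all mild solutions with the same initial datum follows from Gronwall's inequality applied to $t\mapsto\|u(t)-\tilde u(t)\|_X$.

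Next I would upgrade $u$ to a classical solution and continue it maximally. From the Duhamel formula and the smoothing estimates, a standard argument shows $u$ is locally H\"older continuous on $(0,T]$, hence so is $t\mapsto F(u(t))$; the H\"older-in-time regularity theory for analytic semigroups (see \cite{ref30}) then yields $u\in C^1\big((0,T];X\big)\cap C\big((0,T];D(A)\big)$ with $u'(t)+Au(t)=F(u(t))$ on $(0,T]$, and a bootstrap using interior and boundary Schauder estimates for $-d_i\Delta$ together with the smoothness of $f$ gives $u\in C^{1,2}\big((0,T]\times\overline{\Omega}\big)$, so $u$ solves the PDE and the Neumann condition pointwise. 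Let $T_{\max}$ be the supremum of the existence times of classical solutions; by uniqueness these patch into a maximal solution on $[0,T_{\max})$. If $T_{\max}<\infty$, I claim $\liminf_{t\nearrow T_{\max}}\|u(t)\|_X=\infty$: otherwise there are $M<\infty$ and $t_k\nearrow T_{\max}$ with $\|u(t_k)\|_X\le M$, and the local existence step started from $u(t_k)$ produces a solution on an interval of length $T(M)$ depending only on $M$, which for $k$ large extends the solution past $T_{\max}$, contradicting maximality. Since $\liminf=\infty$ is the same as $\lim=\infty$ and the $X$-norm is equivalent to the sum of the $L^\infty$-norms of the components, this is precisely the stated blow-up alternative (for $m=2$, the displayed condition on $\|u(t,\cdot)\|_\infty+\|v(t,\cdot)\|_\infty$).

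The step I expect to be the real obstacle is reconciling the $C(\overline{\Omega})$-setting with the mere hypothesis $u_{i0}\in L^\infty(\Omega)$: the Neumann heat semigroup is analytic but not strongly continuous on $L^\infty(\Omega)$, so one must either run the fixed point argument in $C(\overline{\Omega})^m$ using that $e^{-tA}u_0\in C(\overline{\Omega})^m$ for each $t>0$ while only requiring $u(t)\to u_0$ in a weaker sense (e.g. $L^p$ or weak-$*$) as $t\to 0^+$, or approximate $u_0$ by continuous data and pass to the limit with the uniform local bounds. Aside from this point and the standard bookkeeping with the smoothing estimates, every ingredient is classical and contained in \cite{ref30}, which is why the statement is quoted here rather than re-derived.
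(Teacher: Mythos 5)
The paper does not prove this statement at all: it is quoted as a known result and attributed to Henry \cite{ref30}, so there is no in-paper argument to compare against. Your reconstruction is the standard analytic-semigroup proof that the citation is invoking --- contraction mapping on the Duhamel formulation in $\bigl(C(\overline{\Omega})\bigr)^m$, smoothing to upgrade the mild solution to a classical one, and the restart-from-$u(t_k)$ argument for the blow-up alternative --- and it is essentially correct as a sketch at that level of detail. The one genuinely delicate point, the mismatch between the hypothesis $u_{i0}\in L^{\infty}(\Omega)$ and the lack of strong continuity of the Neumann heat semigroup on $L^{\infty}$, is one you identified yourself, and either of your proposed fixes (taking the initial datum in a weaker topology, or approximating by continuous data and using the uniform local bounds) is the standard resolution. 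Minor bookkeeping aside (e.g.\ on $C(\overline{\Omega})$ the Neumann semigroup is a contraction, which is what makes the invariance of your ball $\mathcal{Z}_T$ work cleanly), nothing in your outline would fail; note only that the interval of existence should be half-open, $[0,T_{\max})$, and that the paper's statement specializes your $m$-component alternative to $m=2$.
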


\subsection{A Case of Linear dispersal}

Consider the following spatially explicit PDE version of linear dispersal system motivated by ODE system $\eqref{2.10}$, resulting in the following reaction diffusion system, defined on $\Omega = [0,L]$,

\begin{equation}\label{PDE_linear}
			\left\{\begin{array}{l}
				\dfrac{\partial u}{\partial t}=\delta_{1}  u_{xx} + s_{1}(x)u\left( \dfrac{u}{m(x)+u}-e(x)-h(x)u \right),\vspace{2ex}\\
				\dfrac{\partial v}{\partial t}=\delta_{2}  v_{xx}+ s(x)v\left( \dfrac{v}{m_{1}(x)+v}-e_{1}(x)-h_{1}(x)v \right), \vspace{2ex}\\
				\dfrac{\partial u}{\partial \nu} = \dfrac{\partial v}{\partial \nu} =0, \quad \text{on} \quad \partial \Omega. \vspace{2ex}\\
				u(x,0)=u_0(x)>0, \quad v(x,0)=v_0(x) >0
			\end{array}\right.
		\end{equation}
Here

\begin{equation*}
m_1(x)=
\begin{cases}
	m(x), x \in [0,L_{1}],\\
	m(x)=0, x \in [L_{1}, L]
\end{cases}, \quad  e_1(x)=
\begin{cases}
e(x), x \in [0,L_{1}],\\
e(x)=0, x \in [L_{1}, L].
\end{cases} 
\end{equation*}

\begin{equation*}
	h_1(x)=
	\begin{cases}
		h(x), x \in [0,L_{1}],\\
		h(x)=1, x \in [L_{1}, L].
	\end{cases}, \quad  s_1 (x)=
	\begin{cases}
		s(x)=1, x \in [0,L_{1}],\\
		s(x), x \in [L_{1}, L].
	\end{cases} 
\end{equation*}


In this framework the patch structure is in a simple one dimensional domain $[0,L]$, where the region from $[0,L_{1}]$ is where the population is subject to an Allee effect, and the region from $[L_{1},L]$ is where the population is not subject to an Allee effect. Here linear dispersal is assumed for the populations modeled by the standard laplacian operator. 

One can typically think of the species $u$ as the population starting in the $[0,L_{1}]$ patch, where it is subject to an Allee effect, and will move via linear dispersal into the $[L_{1},L]$ patch. Once it enters this patch, it is not subject to an Allee effect anymore. Similarly we can think of the species $v$ as the population starting in the $[L_{1},L]$ patch, where there is no Allee effect in place. However it moves into the $[0,L_{1}]$ patch via linear dispersal, and upon entering this patch, it is immediately subject to an Allee effect. We consider the problem in spatial dimension $n=1$. Also the above mentioned functions $m(x),m_{1}(x),h(x),h_{1}(x),s(x),s_{1}(x),e(x),e_{1}(x)$ are all assumed to be in $L^{\infty}[0,L]$.
We can state the following result,

\begin{lemma}
	\label{lem:leml1}
	Consider the reaction diffusion system \eqref{PDE_linear}, then there exist  global in time non-negative classical solutions to this system, for certain positive bounded initial data.
\end{lemma}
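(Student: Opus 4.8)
The plan is to combine the local well-posedness and blow-up dichotomy already recorded in Lemma~\ref{lem:class1} (equivalently Theorem~\ref{thm:class3}) with an a priori $L^{\infty}$ bound coming from the self-limiting (logistic-type) structure of the reaction terms. The first observation is that the nonlinearities $f_1(x,u,v):=s_1(x)u\left(\frac{u}{m(x)+u}-e(x)-h(x)u\right)$ and $f_2(x,u,v):=s(x)v\left(\frac{v}{m_1(x)+v}-e_1(x)-h_1(x)v\right)$ depend, respectively, only on $u$ and only on $v$, so the system \eqref{PDE_linear} in fact decouples into two independent scalar reaction--diffusion equations with homogeneous Neumann data. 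On the biologically relevant range $u,v\ge 0$ these right-hand sides are locally Lipschitz (the only delicate point being the Monod factor $\frac{v}{m_1(x)+v}$ on the sub-patch $[L_1,L]$, where $m_1\equiv 0$ and the factor equals the constant $1$ for $v>0$; this is discussed below), so Lemma~\ref{lem:class1} supplies a unique classical solution $(u,v)$ on a maximal interval $[0,T^{*})$. Since $f_1(x,0,v)=0$ and $f_2(x,u,0)=0$, the nonlinearity is quasi-positive, hence $u_0,v_0>0$ forces $u(\cdot,t),v(\cdot,t)\ge 0$ on $[0,T^{*})$; indeed the sets $\{u=0\}$ and $\{v=0\}$ are invariant, so the strong maximum principle even yields a strictly positive lower bound.

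Next I would derive the uniform upper bound. Because $\frac{u}{m(x)+u}\le 1$ and $e(x),h(x)\ge 0$, with $h_0:=\essinf_{[0,L]}h(x)>0$, one gets $f_1(x,u,v)\le \|s_1\|_{\infty}\,u(1-h_0 u)$. Consequently the spatially constant function $K_u:=\max\{\|u_0\|_{L^{\infty}},1/h_0\}$ satisfies $\partial_t K_u-\delta_1\partial_{xx}K_u=0\ge f_1(x,K_u,v)$, i.e.\ it is a supersolution of the $u$-equation, so $0\le u(x,t)\le K_u$ for all $t\in[0,T^{*})$. The identical comparison argument for the $v$-equation — using $\frac{v}{m_1(x)+v}\le 1$ and the positive lower bound on $h_1$ (which equals $1$ on $[L_1,L]$) — produces a constant $K_v$ depending only on $\|v_0\|_{L^{\infty}}$ and the coefficients with $0\le v(x,t)\le K_v$. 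Together these give $\sup_{t\in[0,T^{*})}\bigl(\|u(t)\|_{L^{\infty}(\Omega)}+\|v(t)\|_{L^{\infty}(\Omega)}\bigr)\le K_u+K_v<+\infty$, so the blow-up alternative in Lemma~\ref{lem:class1}/Theorem~\ref{thm:class3} forces $T^{*}=+\infty$. Combined with the non-negativity established above, this proves the existence of a global-in-time non-negative classical solution for every bounded initial datum $(u_0,v_0)$ with $u_0,v_0>0$.

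The main (and essentially only) obstacle is the absence of a genuine Lipschitz bound for the Monod-type factor $\frac{v}{m_1(x)+v}$ across the interface $x=L_1$, where $m_1$ vanishes and the factor jumps; I would handle this either by mollifying $m_1$ to a strictly positive function $m_1^{\eta}$, running the argument above (whose supersolution bounds are uniform in $\eta$), and passing to the limit $\eta\to 0$, or, alternatively, by first establishing the strictly positive lower bound on $v$ so that the factor is smooth along the actual solution range. A cleaner alternative to the supersolution step is to verify the structural hypothesis of Lemma~\ref{lem:class2} directly: since each $f_i$ is bounded above by an affine function of $u_i$ alone, the choice $P=I$ and a suitable $\mathbf b\in\mathbb{R}^2$ works, and one obtains the global strong solution immediately. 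Either route gives Lemma~\ref{lem:leml1}.
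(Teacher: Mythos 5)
Your proposal is correct and follows essentially the same route as the paper: non-negativity via quasi-positivity, the bound $\frac{u}{m(x)+u}\le 1$ to dominate the reaction terms by logistic ones, and then global existence either by an explicit supersolution plus the blow-up alternative of Lemma~\ref{lem:class1} or by the structural condition of Lemma~\ref{lem:class2} (the latter being exactly the paper's choice). Your extra care about the degenerate factor $\frac{v}{m_1(x)+v}$ on $[L_1,L]$, where $m_1\equiv 0$, is a worthwhile refinement that the paper's proof passes over in silence.
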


\begin{proof}
Non-negativity of solutions follows via the quasi-positivity of the RHS of 
\eqref{PDE_linear}. Next via simple comparison for the $u$ equation we have,

\begin{equation}
	u\left( \dfrac{u}{m+u}-e-hu \right) \leq u\left( \dfrac{u}{u}-e-hu \right) = u\left( 1-e-hu \right)
\end{equation}
This follows using the positivity of the parameter $m$. Comparison with the logistic equation, via the use of Lemma \ref{lem:class2} yields the result. The analysis for the $v$ equation follows similarly.
\end{proof}

		\begin{figure}[h]
		\centering
		\subfigure[]{
			\includegraphics[width=0.4\linewidth]{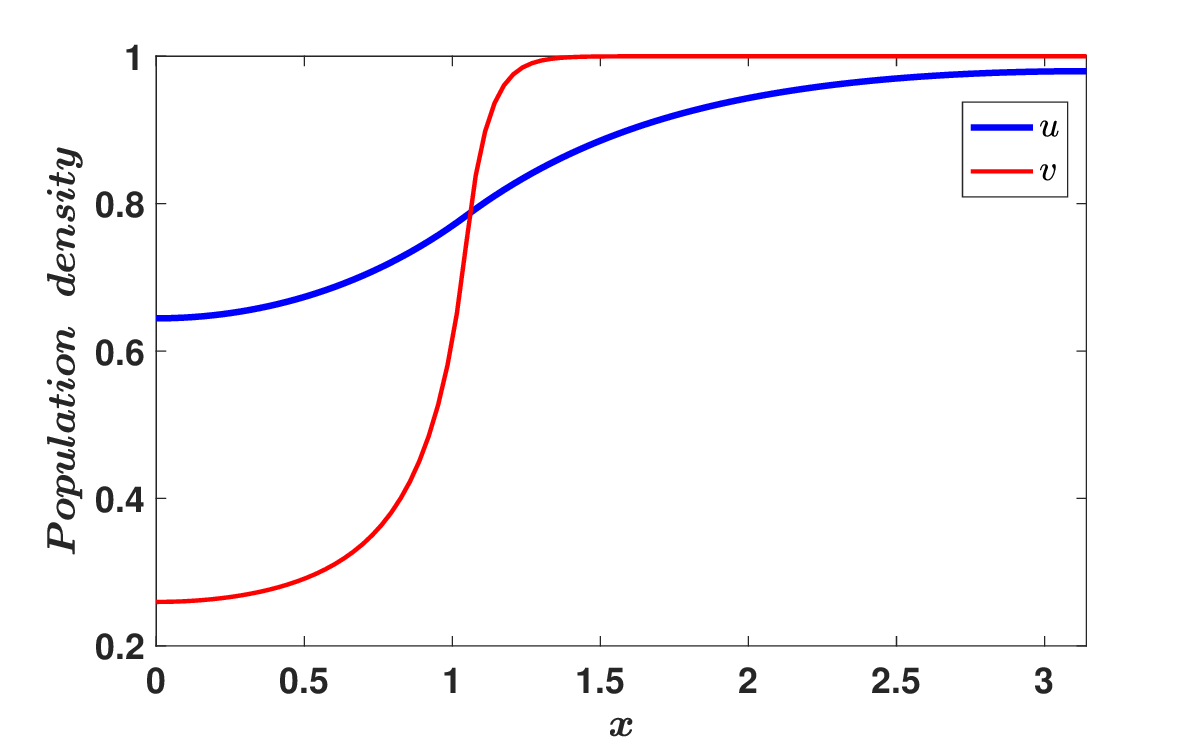}}
		\subfigure[]{
			\includegraphics[width=0.4\linewidth]{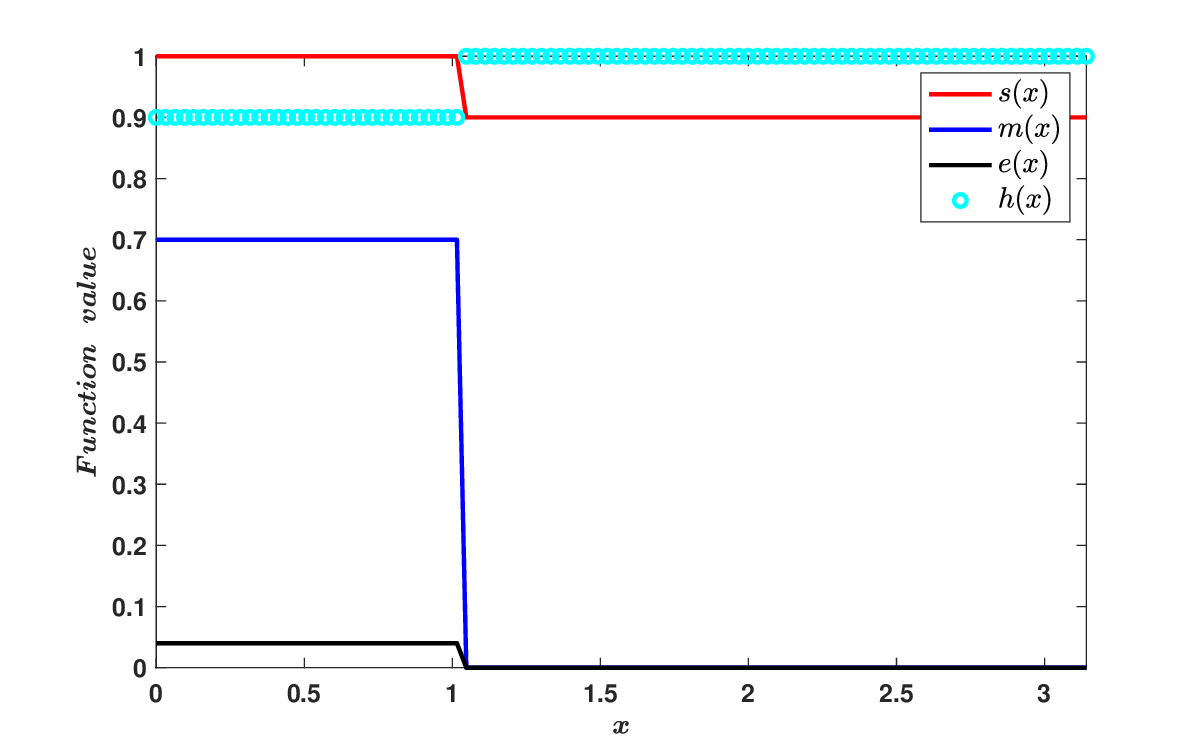}}
		\subfigure[]{
			\includegraphics[width=0.4\linewidth]{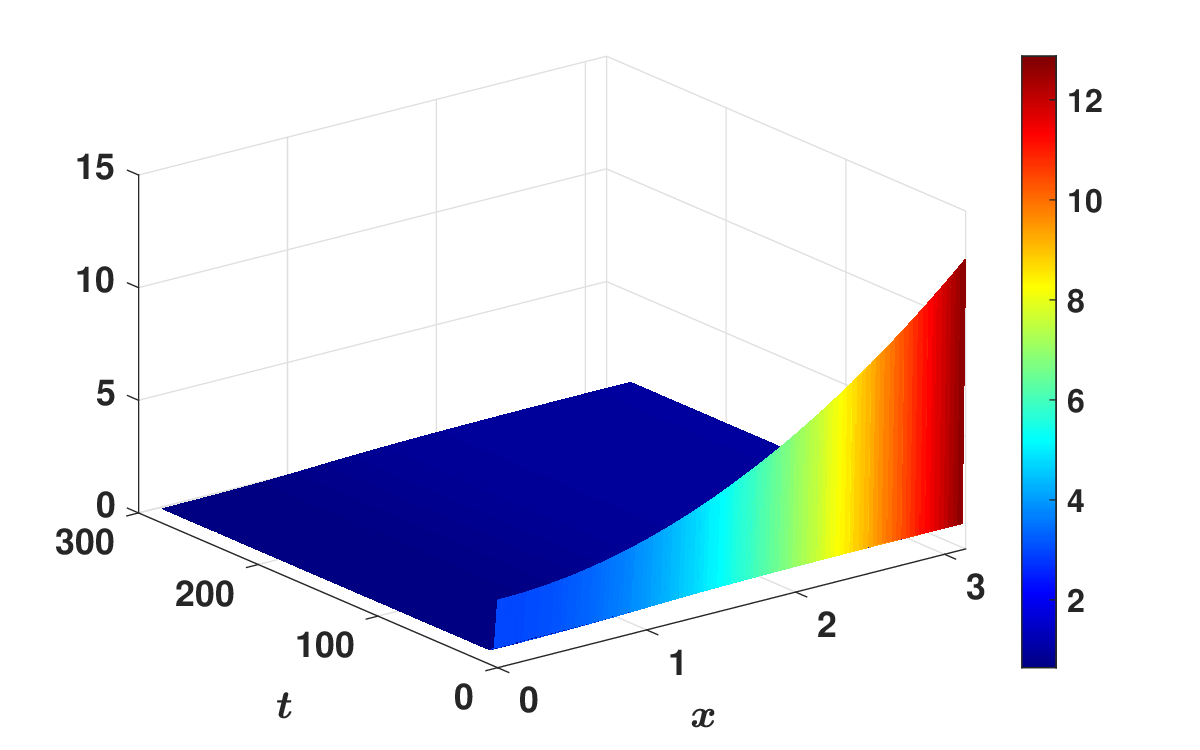}}
		\subfigure[]{
			\includegraphics[width=0.4\linewidth]{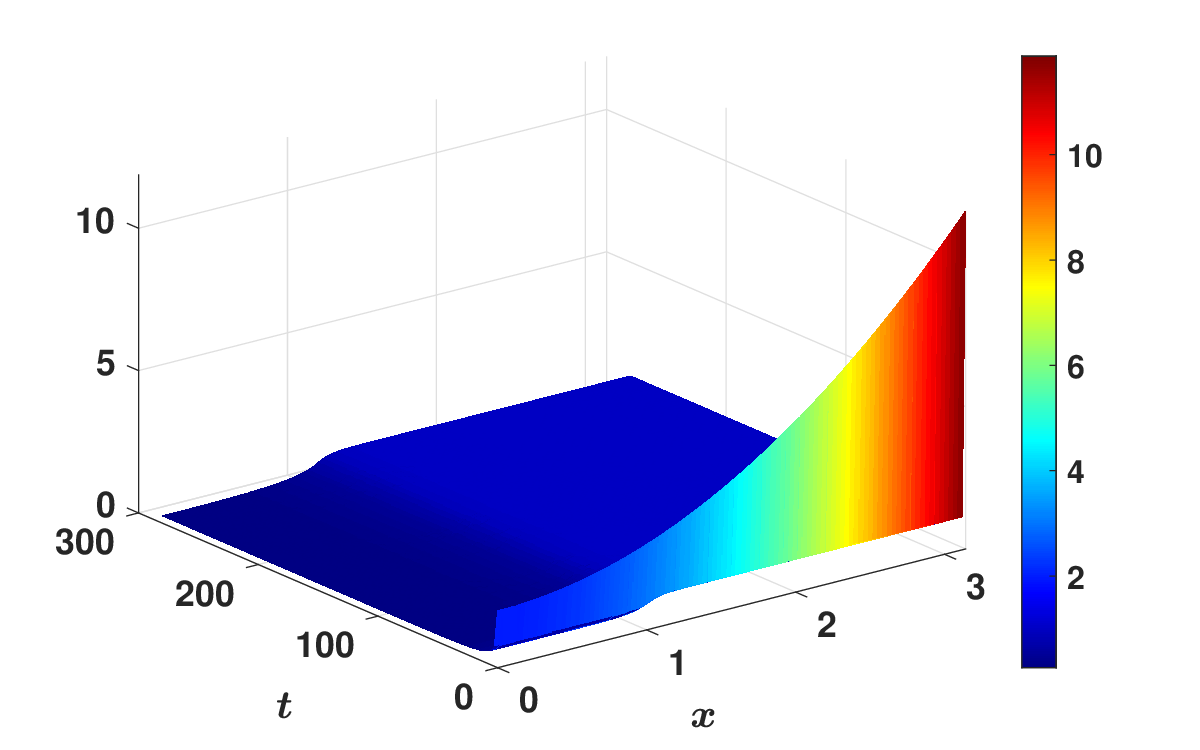}}
		\caption{Numerical simulation  illustrating the impact of a small linear dispersal parameter ($\delta_1 = 0.4,\delta_2 = 0.004$) on the dynamics of system \eqref{PDE_linear} in $\Omega=[0,\pi]$ for intial data $[u_0(x),v_0(x)]=[3+x^2,2+x^2].$ (a) Population density distribution vs space (b) Functional reponses used for simulation (c) Surface plot of $u$ (d) Surface plot of $v$.}
		\label{fig:lin_new}
	\end{figure}
	
	\begin{figure}[h]
		\centering
		\subfigure[]{
			\includegraphics[width=0.4\linewidth]{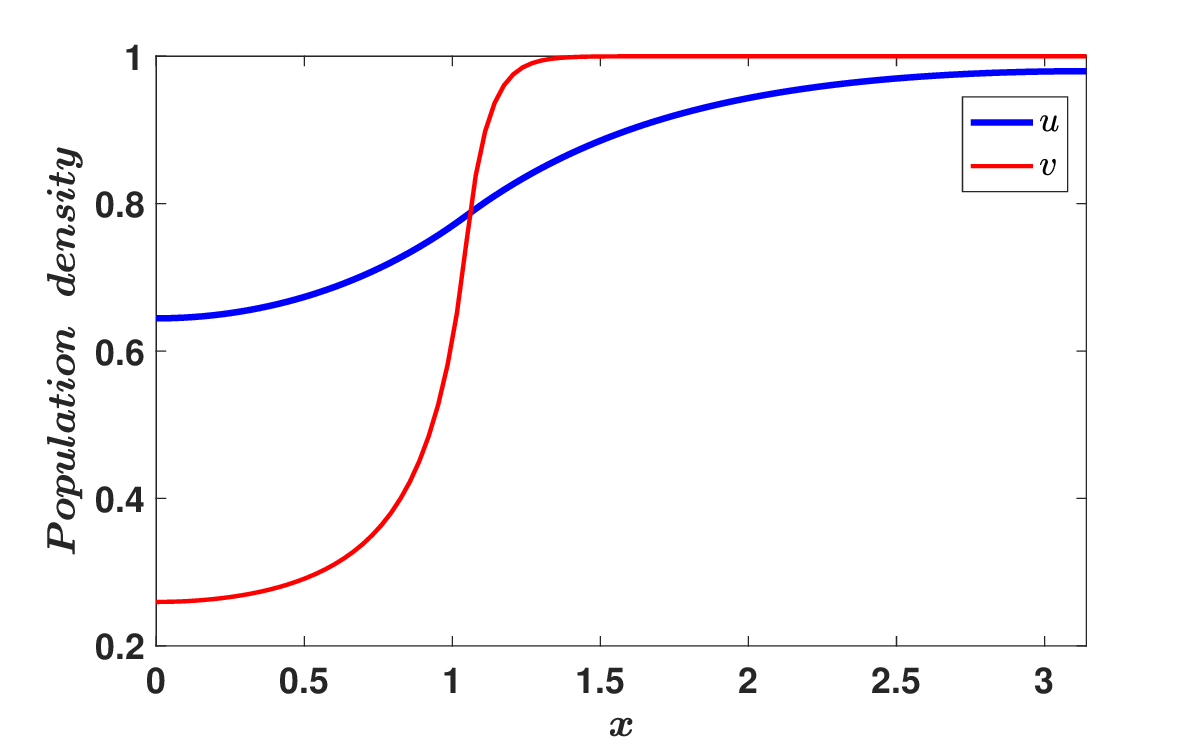}}
		\subfigure[]{
			\includegraphics[width=0.4\linewidth]{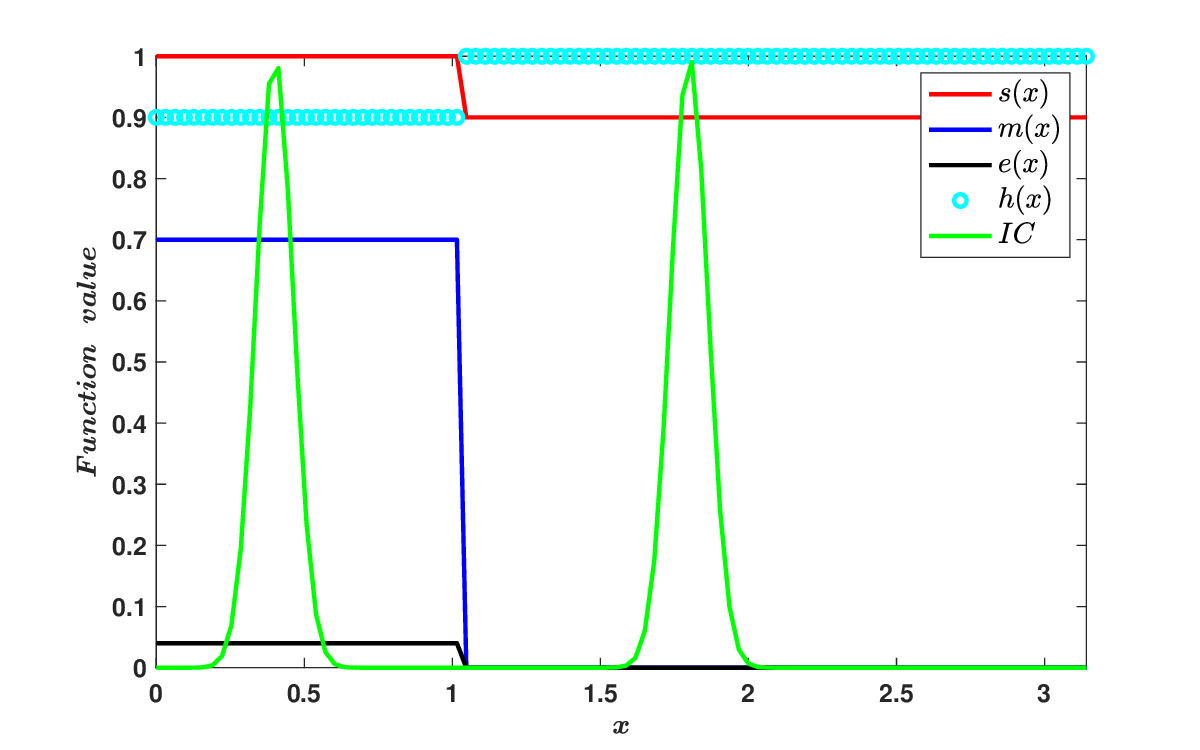}}
		\caption{Numerical simulation  illustrating the impact of a small linear dispersal parameter ($\delta_1 = 0.4,\delta_2 = 0.004$) on the dynamics of system \eqref{PDE_linear} in $\Omega=[0,\pi]$ for with non-flat intial data $[u_0(x),v_0(x)]=[e^{-(\frac{x-1.8}{\sqrt{.008}})^2}+e^{-(\frac{x-0.4}{\sqrt{.008}})^2},e^{-(\frac{x-1.8}{\sqrt{.008}})^2}+e^{-(\frac{x-0.4}{\sqrt{.008}})^2}].$ (a) Population density distribution vs space (b) Functional reponses used for simulation.}
		\label{fig:lin_peak_data1}
	\end{figure}
	
	\begin{figure}[h]
		\centering
		\subfigure[]{
			\includegraphics[width=0.4\linewidth]{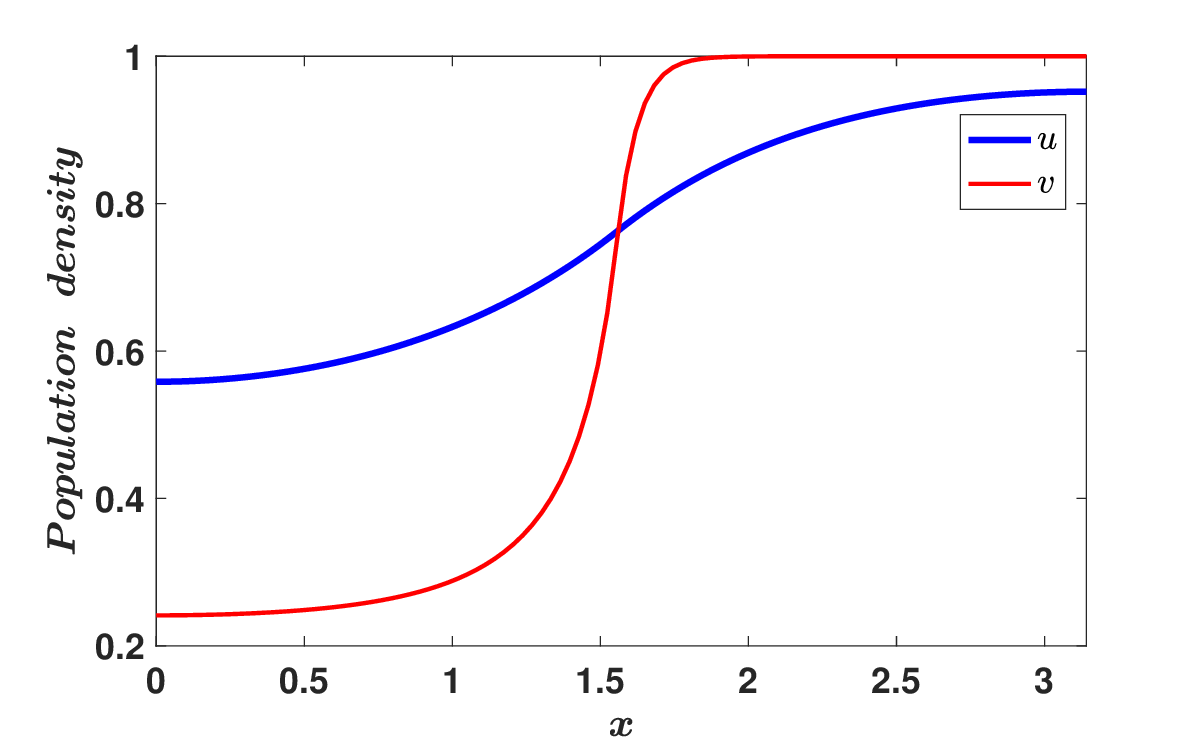}}
		\subfigure[]{
			\includegraphics[width=0.4\linewidth]{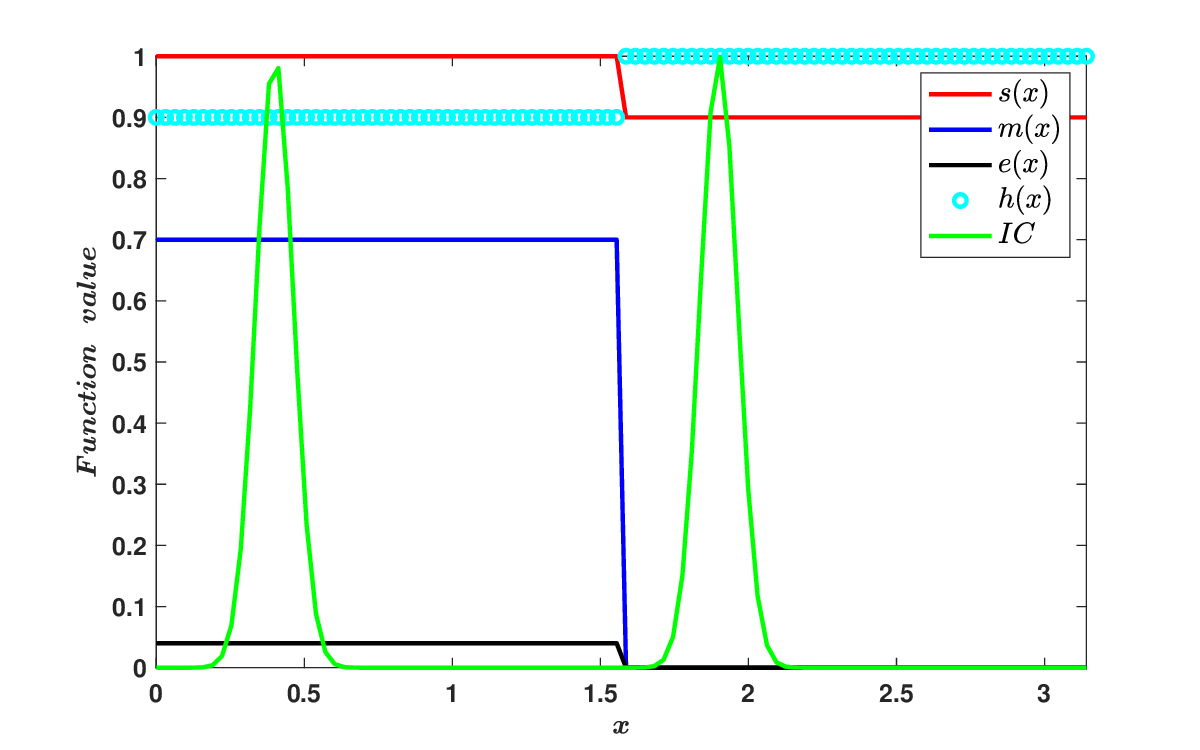}}
		\caption{Numerical simulation  illustrating the impact of a small linear dispersal parameter ($\delta_1 = 0.4,\delta_2 = 0.004$) on the dynamics of system \eqref{PDE_linear} in $\Omega=[0,\pi]$ for with non-flat intial data $[u_0(x),v_0(x)]=[e^{-(\frac{x-1.9}{\sqrt{.008}})^2}+e^{-(\frac{x-0.4}{\sqrt{.008}})^2},e^{-(\frac{x-1.9}{\sqrt{.008}})^2}+e^{-(\frac{x-0.4}{\sqrt{.008}})^2}].$ (a) Population density distribution vs space  (b) Functional reponses used for simulation.}
		\label{fig:lin_peak_data222}
	\end{figure}
	
	\begin{figure}[h]
		\centering
		\subfigure[]{
			\includegraphics[width=0.4\linewidth]{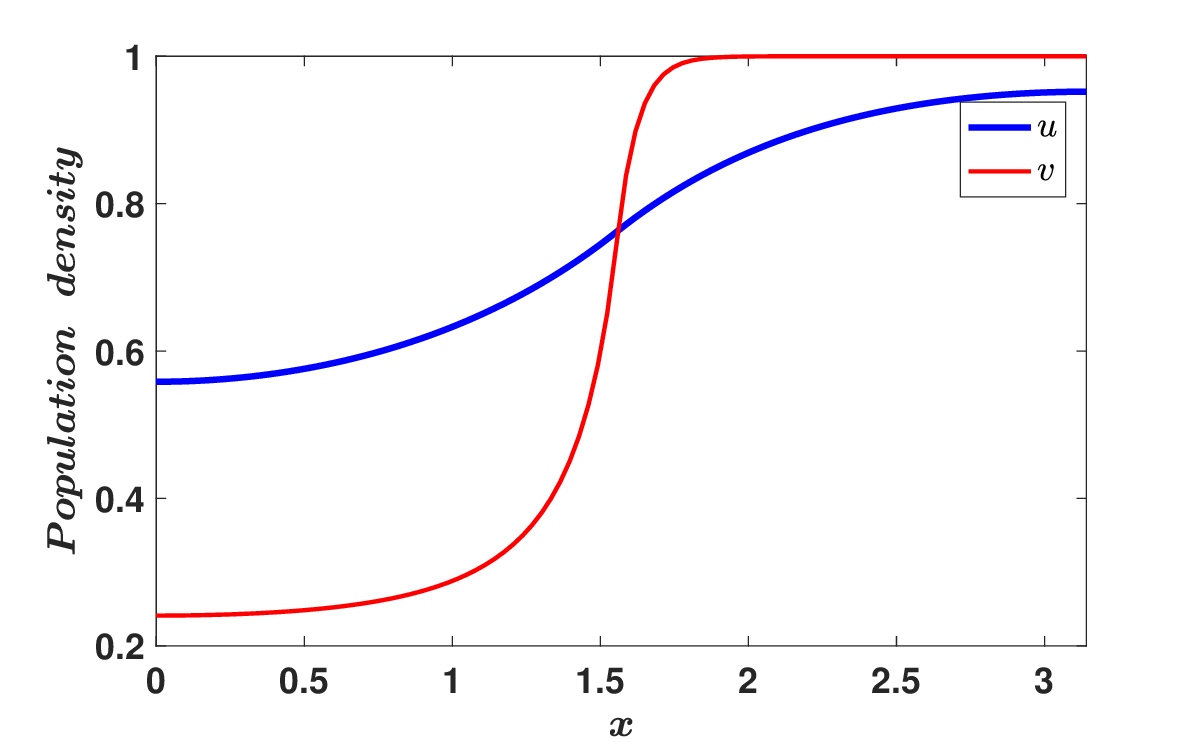}}
		\subfigure[]{
			\includegraphics[width=0.4\linewidth]{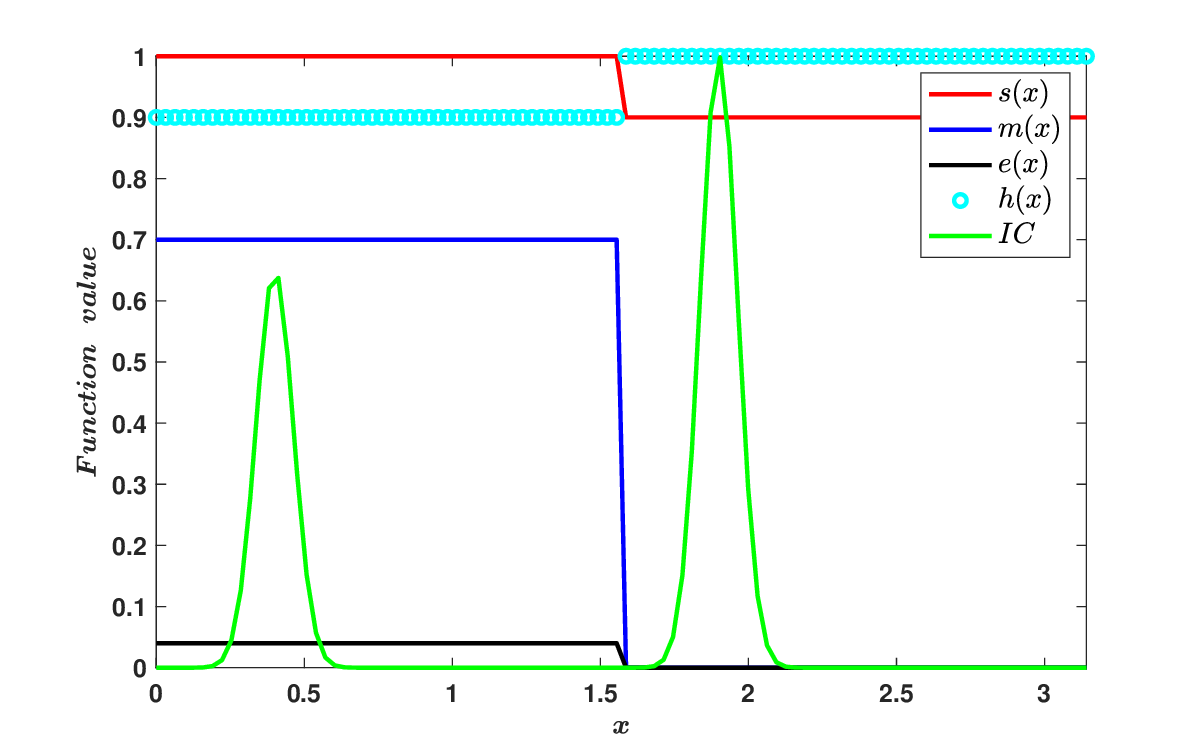}}
		\caption{Numerical simulation  illustrating the impact of a small linear dispersal parameter ($\delta_1 = 0.4,\delta_2 = 0.004$) on the dynamics of system \eqref{PDE_linear} in $\Omega=[0,\pi]$ for with non-flat intial data $[u_0(x),v_0(x)]=[e^{-(\frac{x-1.9}{\sqrt{.008}})^2}+0.65e^{-(\frac{x-0.4}{\sqrt{.008}})^2},e^{-(\frac{x-1.9}{\sqrt{.008}})^2}+0.65e^{-(\frac{x-0.4}{\sqrt{.008}})^2}].$ (a) Population density distribution vs space  (b) Functional reponses used for simulation.}
		\label{fig:lin_peak_data3}
	\end{figure}

	\subsection{A Case of Non-Linear dispersal}
	
	Consider the following spatially explicit version of non-linear dispersal system $\eqref{1.2}$, resulting in the following reaction diffusion system, defined on $\Omega = [0,L]$,

	
	\begin{equation}\label{PDE_nonlinear}
			\left\{\begin{array}{l}
				\dfrac{\partial u}{\partial t}=\delta_{1} u  u_{xx} + s_{1}(x)u\left( \dfrac{u}{m(x)+u}-e(x)-h(x)u \right),\vspace{2ex}\\
				\dfrac{\partial v}{\partial t}=\delta_{2} v  v_{xx}+ s(x)v\left( \dfrac{v}{m_{1}(x)+v}-e_{1}(x)-h_{1}(x)v \right), \vspace{2ex}\\
				\dfrac{\partial u}{\partial \nu} = \dfrac{\partial v}{\partial \nu} =0, \quad \text{on} \quad \partial \Omega. \vspace{2ex}\\
				u(x,0)=u_0(x)>0, \quad v(x,0)=v_0(x) >0
			\end{array}\right.
		\end{equation}

	\begin{equation*}
		m_1(x)=
		\begin{cases}
			m(x), x \in [0,L_{1}],\\
			m(x)=0, x \in [L_{1}, L]
		\end{cases}, \quad  e_1(x)=
		\begin{cases}
			e(x), x \in [0,L_{1}],\\
			e(x)=0, x \in [L_{1}, L].
		\end{cases} 
	\end{equation*}
	
	\begin{equation*}
		h_1(x)=
		\begin{cases}
			h(x), x \in [0,L_{1}],\\
			h(x)=1, x \in [L_{1}, L].
		\end{cases}, \quad  s_1 (x)=
		\begin{cases}
			s(x)=1, x \in [0,L_{1}],\\
			s(x), x \in [L_{1}, L].
		\end{cases} 
	\end{equation*}


In this framework the patch structure is in a simple one dimensional domain $[0,L]$, where the region from $[0,L_{1}]$ is where the population is subject to an Allee effect, and the region from $[L_{1},L]$ is where the population is not subject to an Allee effect. Here non-linear dispersal is assumed for the populations modeled by a non-standard laplacian operator. 
We consider the problem in spatial dimension $n=1$. Again the functions $m(x),m_{1}(x),h(x),h_{1}(x),s(x),s_{1}(x),e(x),e_{1}(x)$ are all assumed to be nonnegative functions in $L^{\infty}[0,L]$. Furthermore, since the $s(x) , h(x)$ functions have to mimic the $h,s$ parameters from the ODE systems considered earlier, we assume that there exists a positive constant $C_{1}$ s.t $0<C_{1} < \min(h(x),h_{1}(x),s(x),s_{1}(x))$.
	
We state the following result,
	
	\begin{theorem}
		Consider the reaction diffusion system \eqref{PDE_nonlinear}, then there exist  global in time non-negative classical solutions to this system, for certain positive bounded initial data.
	\end{theorem}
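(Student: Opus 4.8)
The plan is to regard \eqref{PDE_nonlinear}, on the open region where the solution components are strictly positive, as a \emph{non-degenerate} quasilinear parabolic system: there the diffusion coefficients $\delta_1 u$, $\delta_2 v$ are strictly positive and bounded and the reaction terms are smooth in $(u,v)$, so the quasilinear analogue of the local theory cited in \cite{ref30} yields a unique classical solution on a maximal interval $[0,T_{\max})$, together with the usual dichotomy: either $T_{\max}=+\infty$, or as $t\uparrow T_{\max}$ one has $\|u(t)\|_\infty+\|v(t)\|_\infty\to\infty$, or $\inf u\to 0$ or $\inf v\to 0$. The strategy is to close two a priori estimates — a uniform upper bound and a strictly positive lower bound on each finite interval — which exclude all three bad scenarios. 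We work with initial data $u_0,v_0$ that are bounded and satisfy $u_0,v_0\ge\varepsilon_0$ for some $\varepsilon_0>0$; this is the class meant by ``certain positive bounded initial data.'' Non-negativity is immediate from the quasi-positivity of the reaction, exactly as in Lemma~\ref{lem:class1} and Lemma~\ref{lem:leml1}, and strict positivity will follow from the lower bound.

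For the upper bound I would use a \emph{constant} supersolution. Set $\mathcal M:=\max\{\|u_0\|_\infty,\ 1/C_{1}\}$. Since $\tfrac{u}{m(x)+u}\le 1$ and $h(x)\ge C_{1}$, at any point where $u=\mathcal M$ the reaction term of the $u$-equation is $\le s_1(x)\,\mathcal M\,(1-C_{1}\mathcal M)\le 0$; as the diffusion term vanishes for a spatial constant, $\mathcal M$ is a supersolution of the $u$-equation with $u_0\le\mathcal M$, so the parabolic comparison principle for the Neumann problem gives $\|u(\cdot,t)\|_\infty\le\mathcal M$ on $[0,T_{\max})$. The identical argument for the $v$-equation, using $h_1(x)\ge C_{1}$, yields $\|v(\cdot,t)\|_\infty\le\mathcal M':=\max\{\|v_0\|_\infty,\ 1/C_{1}\}$. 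This kills the blow-up alternative. Note that Lemma~\ref{lem:class2} is not directly applicable because of the $u$-dependence of the diffusion, but the logistic-type bound it encodes is precisely what this constant supersolution reproduces.

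For the lower bound I would produce a decaying but strictly positive \emph{spatially constant} subsolution. On the range $0\le u\le\mathcal M$ the reaction term of the $u$-equation is $\ge -Ku$ with $K:=\|s_1\|_\infty(\|e\|_\infty+\|h\|_\infty\mathcal M)$, so $\underline u(t):=\varepsilon_0 e^{-Kt}$ satisfies $\underline u'=-K\underline u\le$ reaction, has vanishing diffusion, and obeys $\underline u(0)=\varepsilon_0\le u_0$; hence $u(x,t)\ge\varepsilon_0 e^{-Kt}>0$, and analogously $v(x,t)\ge\varepsilon_0 e^{-K't}>0$ for a suitable $K'$. Consequently, on every finite interval $[0,T]$ both components are trapped in a fixed compact subinterval of $(0,\infty)$; the system is then uniformly parabolic with bounded coefficients there, so standard parabolic ($L^p$ and Schauder) estimates control the higher norms and rule out finite-time singularities. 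Together with the dichotomy this forces $T_{\max}=+\infty$, proving the theorem.

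The step I expect to be the main obstacle is making the comparison arguments fully rigorous in the presence of the degeneracy at $u=0$. The clean way is to run each comparison only on $[0,t]$ with $t<T_{\max}$, where continuity on the compact set $\overline{\Omega}\times[0,t]$ already forces the classical, positive solution to lie in a compact subinterval of $(0,\infty)$, so the equation is genuinely uniformly parabolic and the ordinary maximum principle applies; letting $t\uparrow T_{\max}$ then promotes these to the global bounds above. A secondary point is that the $x$-dependent coefficients are only assumed in $L^\infty$: one should either strengthen this (e.g.\ to H\"older continuity) so the classical theory applies verbatim, or read ``classical'' as ``strong'' ($W^{2,p}_{\mathrm{loc}}$ in $x$, $C^1$ in $t$) and bootstrap — neither alteration affects the a priori estimates.
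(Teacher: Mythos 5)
Your proof is essentially correct, but it takes a genuinely different route from the paper. The paper's argument divides the $u$-equation by $u$ to obtain an evolution equation for $\log u$, integrates over $\Omega$ (so that $\int_\Omega \delta_1 u_{xx}\,dx$ vanishes by the Neumann condition), and then runs a Gronwall argument on $\int_\Omega \log u\,dx$ using $C_1^2\le s_1(x)h(x)$ and $\log x<x$; this yields an integral bound on $\log u$, which the authors then combine with the classical theory of \cite{ref30} (control of the right-hand side in $L^p$, $p>n/2$) to conclude. You instead work pointwise: a constant supersolution $\mathcal M=\max\{\|u_0\|_\infty,1/C_1\}$ gives a uniform $L^\infty$ upper bound, and a decaying constant subsolution $\varepsilon_0 e^{-Kt}$ gives a strictly positive lower bound on every finite interval, so the diffusion coefficient $\delta_1 u$ stays in a compact subinterval of $(0,\infty)$ and the system is uniformly parabolic there. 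What your approach buys is a direct, two-sided pointwise control that squarely confronts the degeneracy of the diffusion at $u=0$ — something the paper handles only implicitly by ``formally dividing by $u$ assuming positivity'' — and it replaces the paper's somewhat delicate passage from an $L^1(\Omega)$ bound on $\log u$ to control of the nonlinearity by the standard comparison machinery; the price is that you need the comparison principle for the quasilinear problem, which, as you correctly note, must be run on compact time slabs $[0,t]$, $t<T_{\max}$, where positivity and continuity already make the operator uniformly parabolic. Your caveat about the merely $L^\infty$ (indeed piecewise-defined, discontinuous) coefficients is also well taken and applies equally to the paper's own statement of ``classical'' solutions.
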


	\begin{proof}
	Consider the $u$ equation for the reaction diffusion system \eqref{PDE_nonlinear}. Dividing through by $u$ we obtain,
	the following equivalent equation,

	\begin{equation}\label{PDE_nonlinear_equiv}
		\left\{\begin{array}{l}
			\dfrac{\partial }{\partial t}\left( \log u\right)=\delta_{1}  u_{xx} + s_{1}(x)\left( \dfrac{u}{m(x)+u}-e(x)-h(x)u \right),\vspace{2ex}\\
			
		\end{array}\right.
	\end{equation}
	This follows by formally dividing through by $u, v$ assuming positivity. Integrating the above equation over $\Omega$ yields,
	
	\begin{equation*}
		\frac{d}{dt}\int_{\Omega}\log(u) dx + \int_{\Omega} s_{1}(x) h(x)  u dx \leq s_{1}(x)|(1-e(x))||\Omega|.
	\end{equation*}
	
	from which it follows that,
	
	\begin{equation*}
		\frac{d}{dt}\int_{\Omega}\log(u) dx + (C_{1})^{2}\int_{\Omega} u dx \leq ||s_{1}(x)||_{\infty}||(1-e(x))||_{\infty} |\Omega|.
	\end{equation*}
	
This follows using the earlier estimate on the RHS of $u$ equation in Lemma \ref{lem:leml1}, the assumption that $0<C_{1} < \min(h(x),h_{1}(x),s(x),s_{1}(x))$. 
	Now using the inequality $\log(x) < x, x>0$, we obtain
	
	\begin{equation*}
		\frac{d}{dt}\int_{\Omega}\log(u) dx + (C_{1})^{2} \int_{\Omega} \log(u) dx \leq C_{2}|\Omega|.
	\end{equation*}
	An application of Gronwall inequality yields,
	\begin{equation*}
		\int_{\Omega}\log(u) dx  \leq \frac{C_{2}|\Omega|}{C_{1}^{2}} + \log(u_{0}(x))
	\end{equation*}

%
%
	
	Similar analysis follows for the $v$ equation. Thus the $L^{1}(\Omega)$ norms of the $\log(u)$ cannot blow-up at any finite time $T^{*} < \infty$, for suitable initial data $u_{0}(x)$ s.t. the 
	$\log(u_{0}(x))$ is well defined. This in turn yields control of the $L^{1}(\Omega)$ norms of the solution. Here $C_{2}$ is a pure constants that could absorb the other parameters in the problem. This, in conjunction with classical theory \cite{ref30}, where essentially one needs to control the RHS of \eqref{PDE_nonlinear}, in $L^{p}$ for $p>\frac{n}{2}$, yields the result. 
	\end{proof}

\begin{figure}[h]
	\centering
	\subfigure[]{
		\includegraphics[width=0.4\linewidth]{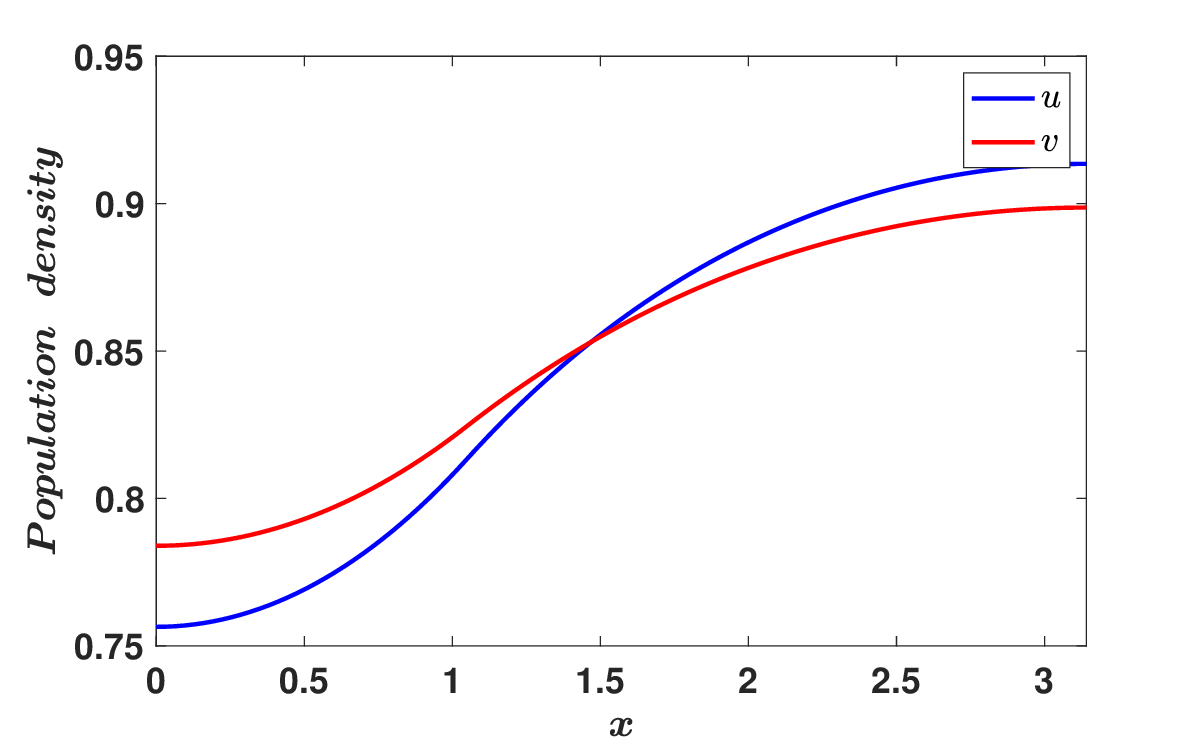}}
	\subfigure[]{
		\includegraphics[width=0.4\linewidth]{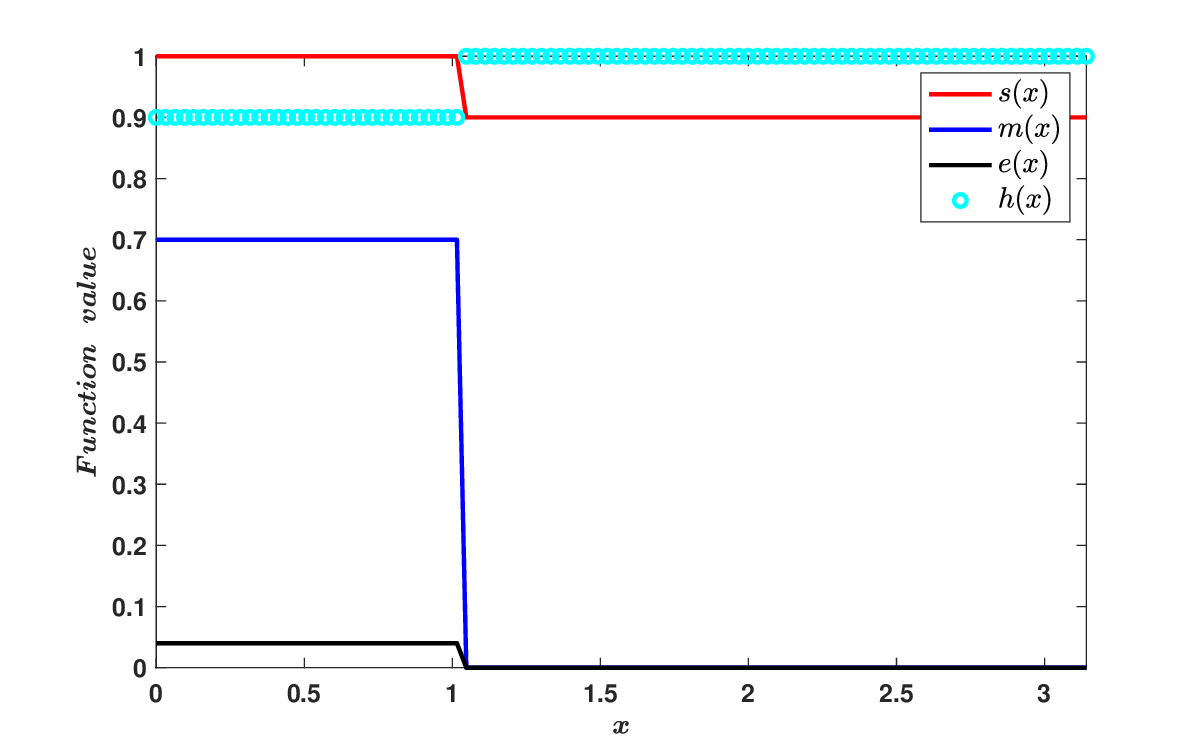}}
	\subfigure[]{
		\includegraphics[width=0.4\linewidth]{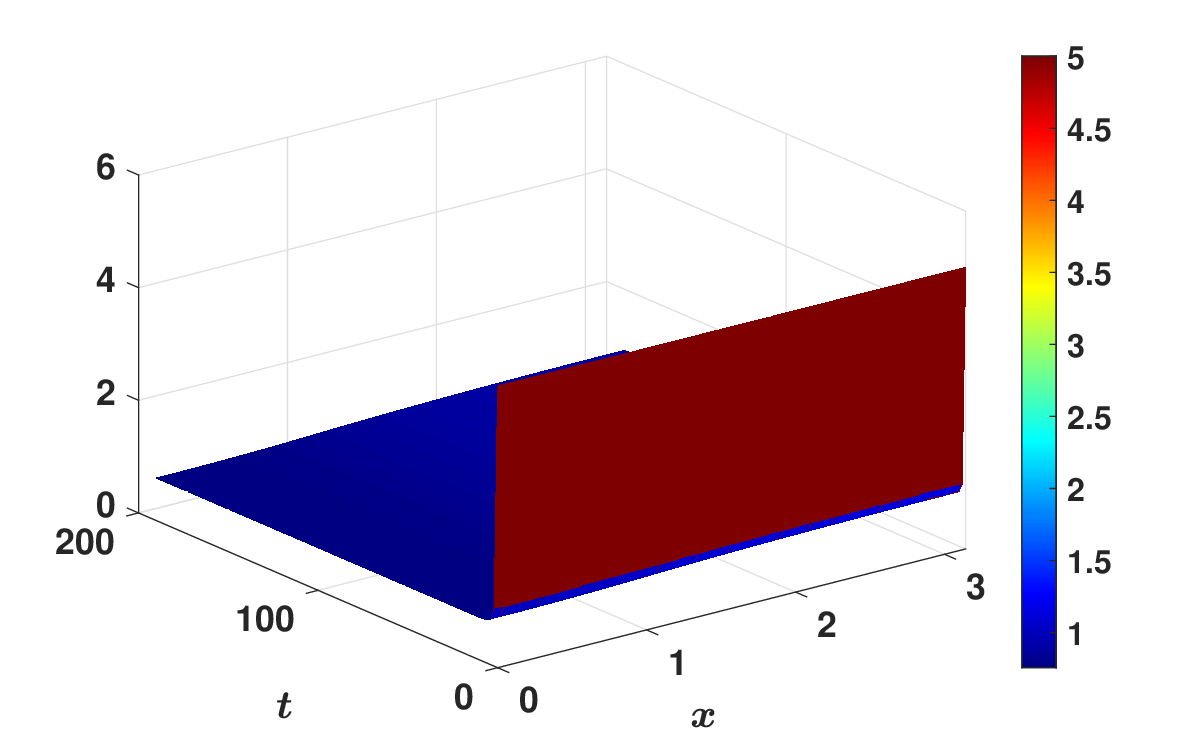}}
	\subfigure[]{
		\includegraphics[width=0.4\linewidth]{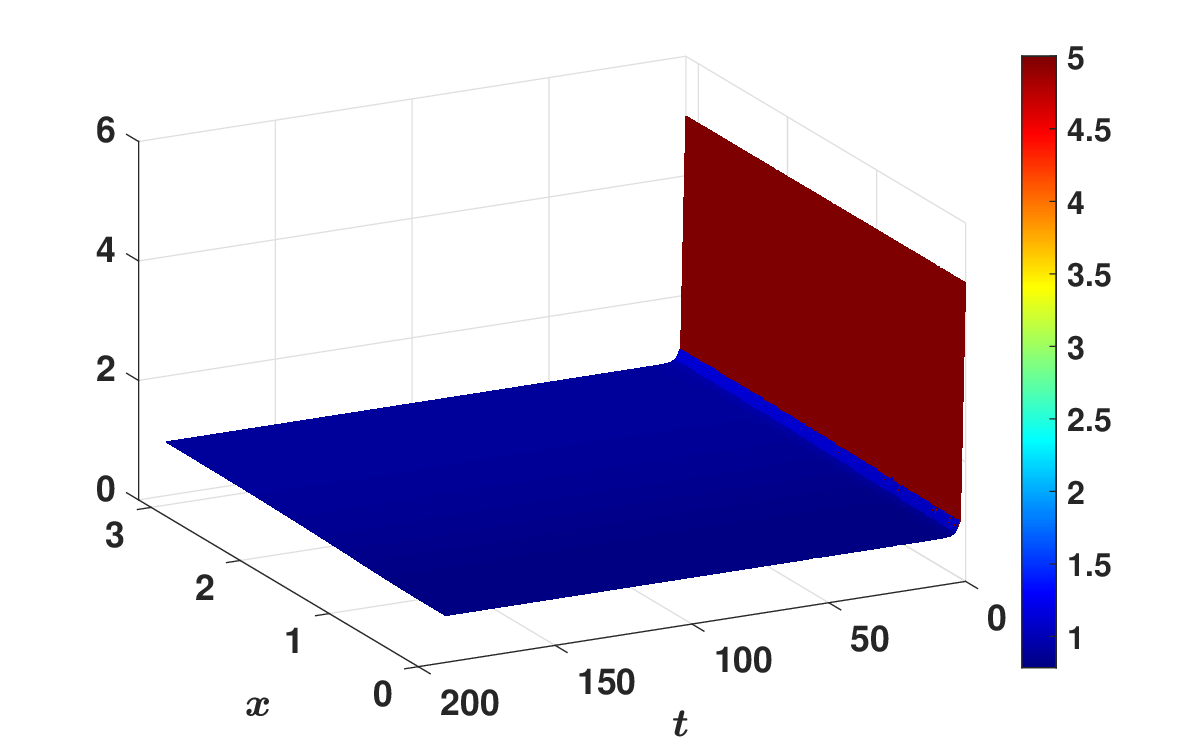}}
	\caption{Numerical simulation  illustrating the impact of a large non-linear dispersal parameter ($\delta_1 = 2,\delta_2 = 3$) on the dynamics of system \eqref{PDE_nonlinear} in $\Omega=[0,\pi]$ for intial data $[u_0(x),v_0(x)]=[5,5].$ (a) Population density distribution vs space (b) Functional reponses used for simulation (c) Surface plot of $u$ (d) Surface plot of $v$.}
	\label{fig:nonlin_new}
\end{figure}

\begin{figure}[h]
	\centering
	\subfigure[]{
		\includegraphics[width=0.4\linewidth]{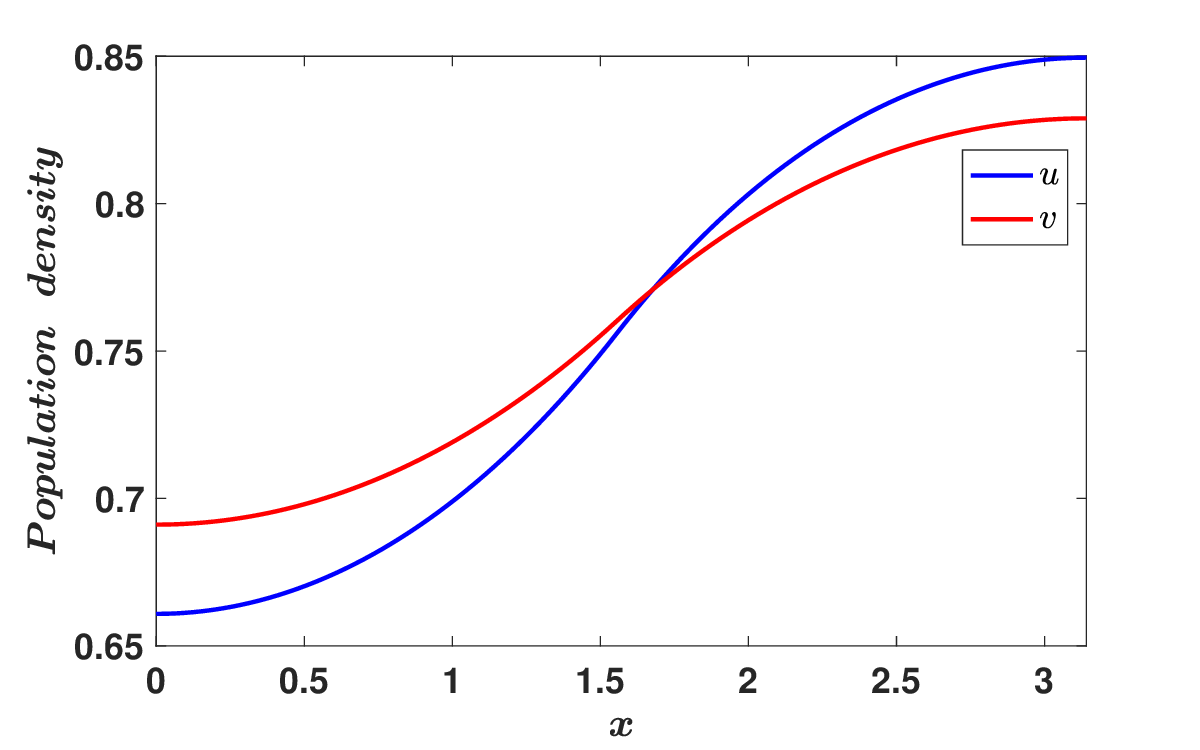}}
	\subfigure[]{
		\includegraphics[width=0.4\linewidth]{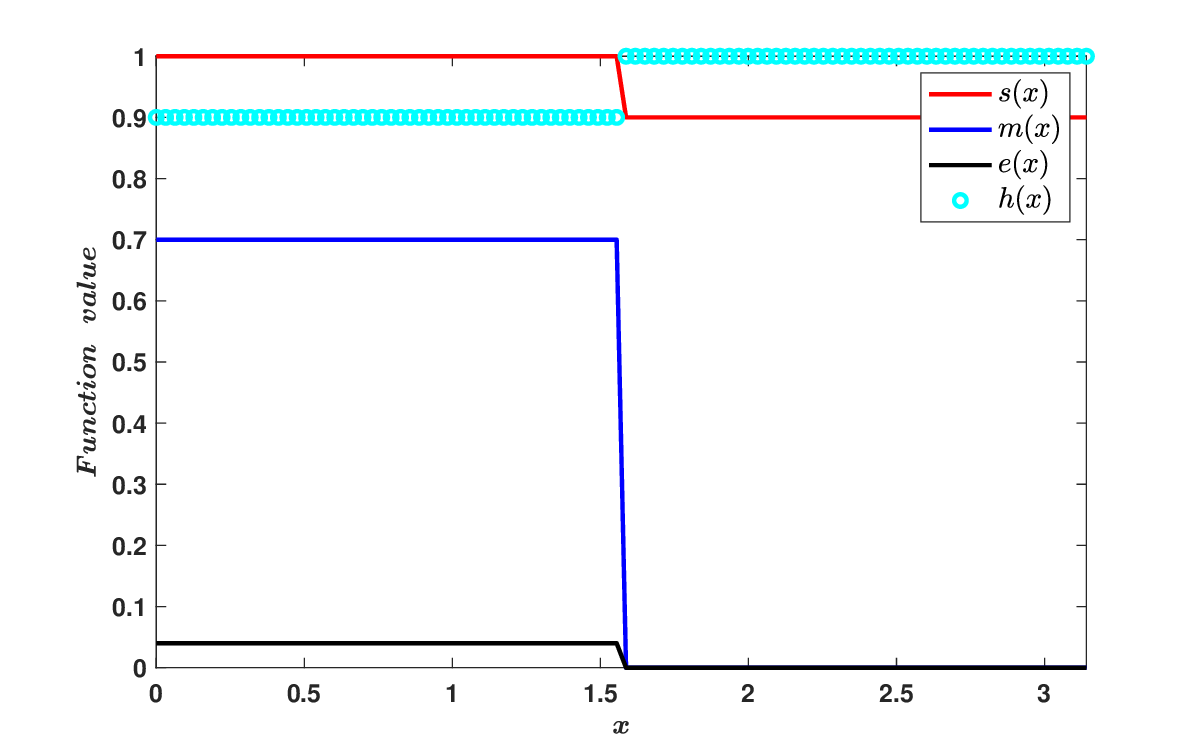}}
	\caption{Numerical simulation  illustrating the impact of a large non-linear dispersal parameter ($\delta_1 = 2,\delta_2 = 3$) on the dynamics of system \eqref{PDE_nonlinear} in $\Omega=[0,\pi]$ for with non-flat intial data $[u_0(x),v_0(x)]=[5,5].$ (a) Population density distribution vs space (b) Functional reponses used for simulation.}
	\label{fig:nonlin_peak_data1}
\end{figure}

\begin{figure}[h]
	\centering
	\subfigure[]{
		\includegraphics[width=0.4\linewidth]{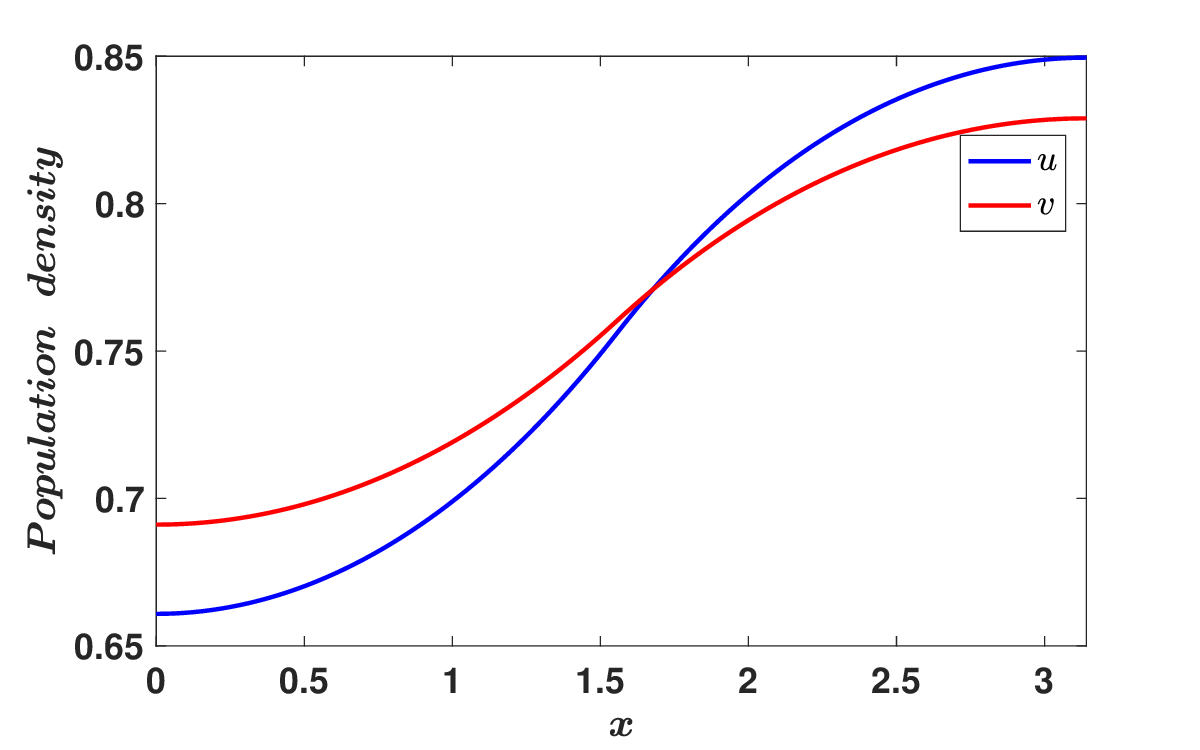}}
	\subfigure[]{
		\includegraphics[width=0.4\linewidth]{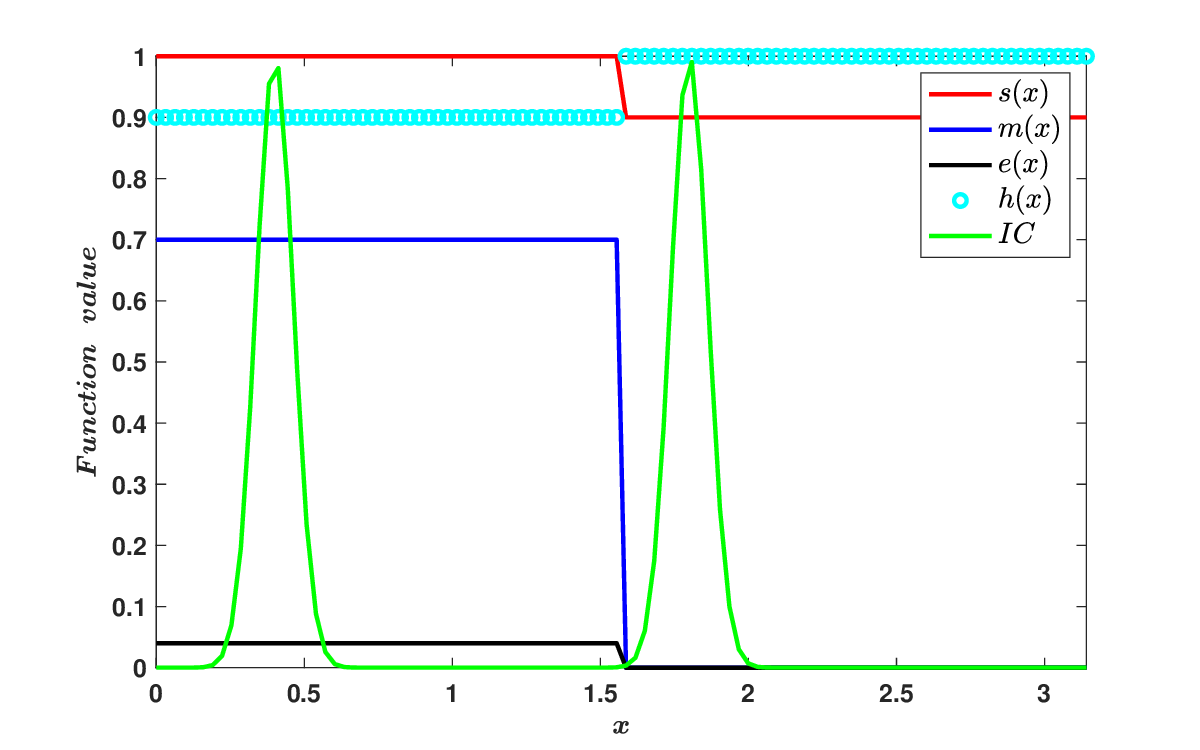}}
	\caption{Numerical simulation  illustrating the impact of a large non-linear dispersal parameter ($\delta_1 = 2,\delta_2 = 3$) on the dynamics of system \eqref{PDE_nonlinear} in $\Omega=[0,\pi]$ for with non-flat intial data $[u_0(x),v_0(x)]=[0.0001+e^{-(\frac{x-1.9}{\sqrt{.008}})^2}+e^{-(\frac{x-0.4}{\sqrt{.008}})^2},0.0001+e^{-(\frac{x-1.9}{\sqrt{.008}})^2}+e^{-(\frac{x-0.4}{\sqrt{.008}})^2}].$ (a) Population density distribution vs space  (b) Functional reponses used for simulation.}
	\label{fig:nonlin_peak_data2}
\end{figure}

\subsection{Numerical Simulations}
The numerical simulations for both linear equation \eqref{PDE_linear} and nonlinear equation \eqref{PDE_nonlinear} in the context of dispersal partial differential equations (PDEs) were executed using MATLAB R2021b. The simulations employed the built-in function \verb|pdepe|, specifically designed to solve one-dimensional parabolic and elliptic PDEs. The spatial domain was set as the unit-sized interval $[0,1]$, which was discretized into $100$ sub-intervals. It is numerically validated under some parametric restriction, and for some given data, a large magnitude of nonlinear dispersal or less intensity of linear dispersal may prevent species with the Allee effect from becoming extinct (See Figs~[	\ref{fig:lin_new},\ref{fig:lin_peak_data1},\ref{fig:lin_peak_data222},\ref{fig:lin_peak_data3},\ref{fig:nonlin_new},\ref{fig:nonlin_peak_data1},\ref{fig:nonlin_peak_data2}
].

		\section{Conclusions}
		In this paper, the interplay of the Allee effect and nonlinear dispersal on a two-patch model are studied. Our goal is to demonstrate whether nonlinear diffusion between the two patches contributes to overcoming the Allee effect.
		
		When the dispersal intensity is low, we have concluded that population $u$  will go extinct when $B<e<1, m>m^*$. Besides, we also proved that the two positive equilibrium points $E_1\left( u_1,\ v_1 \right)$ and $E_2\left( u_2,\ v_2 \right)$ of model (\ref{1.2}) will undergo saddle-node bifurcation when $m=m^*$. These findings suggest that the Allee effect has a major impact on the extinction of the population in patch 1 when the dispersal intensity is very weak. However, the positive equilibrium $E_1\left( u_1,\ v_1 \right)$ is always globally asymptotically stable when $e<B$, i.e., $\delta>\frac{se}{s-e}$. The above result shows that both species will persist when the nonlinear dispersal intensity is high. In other words, under large nonlinear dispersal, the persistence of both species seems independent of the Allee effect. 
		
		Besides, for the corresponding model with linear dispersal, we have obtained two interesting  results when the Allee effect is large. The results show that when the linear dispersal is high, both the species will go extinct. However, the species with low linear dispersal will persist.  We declare that a large magnitude of nonlinear dispersal or a less intensity of linear dispersal  may prevent species with a strong Allee effect from becoming extinct. 
		
		The above are results derived in the ODE case. We derive analogous results in the PDE case as well. Herein, we set up a one dimensional domain to have two explicit patches, one patch where the populations are subject to an Allee effect and the other patch where they are not. The species move in and out of these patches via linear diffusion, as well as non-linear diffusion. What we observe in the PDE case is that high nonlinear diffusion can eliminate the effects of the strong Allee effect so that populations do not become extinct, see Fig. \ref{fig:nonlin_new}-\ref{fig:nonlin_peak_data2}. This complements the ODE model's findings with continuous patches models. We also observe is that in the case of linear diffusion, small intensity of diffusion, can also lead to coexistence. This again is in accordance with our ODE findings. This is also true, even if we take an initial condition in the patch subject to an Allee effect, s.t. the $||u_{0}||_{\infty} < M$, that is the peak of the initial data is under the Allee threshold. Thus the small linear diffusion allows the species to disperse into the second patch, and escape the Allee effect, before it can cause local extinction, see Fig. \ref{fig:lin_peak_data3}. 
		
		Another important use of the PDE model is applications to habitat fragmentation due to human intervention and exploitation. Some fragmented patches are large while others are small in area. Although ODE models with patch structure are powerful, but all in all are not spatially explicit - so cannot capture this effect of patches of different sizes. They cannot explicitly model the case of a patch, which is actually say 10, 20 or 30 percent of the entire domain. However, in the PDE case with patch structure we can model this situation. We include this in our simulations in the PDE case. You will see that we try patches which are both 1/3 the size of the entire domain as well as 1/2 the size of the domain. We note that in both cases we can obtain coexistence. Thus patch size does not seem to play a part in achieving the coexistence dynamics, one can see this by comparing Fig. \ref{fig:nonlin_new} to Fig. \ref{fig:nonlin_peak_data2} or Fig. \ref{fig:lin_peak_data1} to Fig. \ref{fig:lin_new} . Proving this rigorously will make for very interesting future work.

            Recent studies by Srivastava et al. \cite{ref31} and Chen et al. \cite{ref32} have inspired further exploration. These studies investigate the impact of fear in a purely competitive two-species model, where one species instills fear in the other. To advance our understanding, it would be valuable to enhance future research by extending the incorporation of the fear effect into both linear and nonlinear dispersal systems, considering both ordinary differential equation (ODE) and partial differential equation (PDE) formulations. In summary, future investigations that modify the ODE and PDE versions of linear and nonlinear dispersal systems to incorporate the fear effect build upon recent work and have the potential to deepen our understanding of ecological dynamics. By exploring the role of fear in species interactions, we can uncover new dimensions and pave the way for more accurate modeling and conservation approaches. Other future directions in the PDE cases will include investigating edge effects with patch structure, \cite{m20} and investigating blow-up prevention with patches \cite{P16, v23}.

		As is known, the traditional growth rate of species is logistic and an Allee effect in place may cause a species to become extinct. To this end it is interesting to propose a model with two patches, i.e., one where there is logistic growth and the other where there is an Allee effect. Thus, in this manuscript, We have explored suitable dispersal strategy which can benefit species in both patches. In other words, we discuss the impact of dispersal on keeping the species subject to an Allee effect from extinction. Our results shows that a large magnitude of nonlinear dispersal or a less intensity of linear dispersal may prevent species subject to a strong Allee effect from becoming extinct. We point out that it is also meaningful to investigate a two patch model, where the form of the Allee effect in the various patches could change - such as a strong effect in one patch versus a weak effect in the other patch. We leave such an investigation for future work.

		\section*{Acknowledgment}
		\hspace*{\parindent}This work was supported by the National
		Natural Science Foundation of China under
		Grant(11601085) and the Natural Science Foundation of Fujian
		Province(2021J01614, 2021J01613).

        \section*{Conflict of interest}
        The authors declare there is no conflict of interest.


\end{document}